\newcommand{\R}{{\mathbb R}}
\newcommand{\N}{{\mathbb N}}
\newcommand{\Hf}{{\mathbb{H}}}
\newcommand{\cH}{{\mathcal H}}
\newcommand{\uu}{\mathbf{u}}
\newcommand{\vv}{\mathbf{v}}
\newcommand{\w}{\mathbf{w}}
\newcommand{\cc}{\mathbf{c}}
\newcommand{\h}{\mathbf{h}}
\newcommand{\g}{\mathbf{ g}}
\newcommand{\q}{\mathbf{q}}
\newcommand{\pp}{\mathbf{p}}
\newcommand{\wuuj}{\widetilde{\mathbf{u}_j}}
\newcommand{\wuuz}{\widetilde{\mathbf{u}_0}}
\newcommand{\e}{\varepsilon}
\newcommand{\al}{\alpha}
\newcommand{\be}{\beta}
\newcommand{\de}{\delta}
\newcommand{\si}{\sigma}
\newcommand{\vp}{\varphi}
\newcommand{\dist}{\operatorname{dist}}
\newcommand{\loc}{\operatorname{loc}}
\newcommand{\D}{\nabla}
\newcommand{\p}{\partial}
\newcommand{\mean}[1]{\langle{#1}\rangle}
\DeclareRobustCommand{\rchi}{{\mathpalette\irchi\relax}}
\newcommand{\irchi}[2]{\raisebox{\depth}{$#1\chi$}}
\newtheorem{theorem}{Theorem}
\theoremstyle{plain}
\newtheorem{corollary}{Corollary}
\newtheorem{definition}{Definition}
\newtheorem{example}{Example}
\newtheorem{lemma}{Lemma}
\newtheorem{remark}{Remark}
\newtheorem{proposition}{Proposition}
\numberwithin{equation}{section}
\begin{document}

\author{Daniela De Silva}
\address[Daniela De Silva]
{Department of Mathematics \newline 
\indent  Barnard College, Columbia University \newline 
\indent New York, NY,
10027} 
\email[Daniela De Silva]{desilva@math.columbia.edu}

\author{Seongmin Jeon}
\address[Seongmin Jeon]
{Department of Mathematics \newline 
\indent KTH Royal Institute of Technology \newline 
\indent 100 44 Stockholm, Sweden} 
\email[Seongmin Jeon]{seongmin@kth.se}

\author{Henrik Shahgholian}
\address[Henrik Shahgholian]
{Department of Mathematics \newline 
\indent KTH Royal Institute of Technology \newline 
\indent 100 44 Stockholm, Sweden} 
\email[Henrik Shahgholian]{henriksh@kth.se}

\title[Almost minimizers for a singular system with free boundary]
{Almost minimizers for a singular system with free boundary}

\date{\today}
\keywords{Almost minimizers, Singular system, Regular set, Weiss-type monotonocity formula, Epiperimetric inequality} 
%35R35  	Free boundary problems
%35J60  	Nonlinear elliptic equations
\subjclass[2010]{Primary 35R35; Secondary 35J60} 
\thanks{ DD is supported by a NSF grant (RTG 1937254).
 HS is supported by Swedish Research Council.}

\begin{abstract}
    In this paper we study  vector-valued almost minimizers of the energy functional
    $$
    \int_D\left(|\D\uu|^2+2|\uu|\right)\,dx .
    $$
    We establish the regularity for both minimizers  and   the "regular" part of the free boundary. The analysis of the free boundary is based on Weiss-type monotonicity formula and the epiperimetric inequality for the energy minimizers.
\end{abstract}

\maketitle 

\tableofcontents

\section{Introduction}
\subsection{Singular cooperative system}
Let $D\subset\R^n$, $n\ge 2$, be an open set and consider the problem of minimizing the energy \begin{equation}\label{functional}
\int_D\left(|\D\uu|^2+2|\uu|\right)\,dx
\end{equation}
among all functions $\uu=(u_1,\cdots,u_m)\in W^{1,2}(D;\R^m)$, $m\ge 1$, satisfying $\uu=\g$ on $\p D$, where $\g:\p B\to\R^m$ is a prescribed boundary data. This problem can be viewed as  a  vector-valued version of the classical \emph{obstacle problem}. The energy minimizers are solutions of the singular system 
 \begin{align}
    \label{eq:sol}
    \Delta \uu=\frac{\uu}{|\uu|}\rchi_{\{|\uu|>0\}},
\end{align}
where $\rchi_{\{|\uu|>0\}}$ is the characteristic function of $\{|\uu|>0\}$. The regularity of both solutions $\uu$ of \eqref{eq:sol} and their free boundaries $\p\{|\uu|>0\}$ was established in \cite{AndShaUraWei15}, with the help of monotonicity formulas and epiperimetric inequalities.

%%%%%%%%%%%%%%%%%%%%%%%%%%%%%%%%%%%%%%%%%%%%%%%%%%

\subsection{Almost minimizers} Given $r_0>0$, we say that a function $\omega:[0,r_0)\to[0,\infty)$ is a \emph{gauge function} or a modulus of continuity if $\omega$ is monotone nondecreasing and $\omega(0+)=0$.

\begin{definition}\label{def:alm-min}
Let $r_0>0$ be a constant and $\omega(r)$ be a gauge function. We say that $\uu\in W^{1,2}(B_1;\R^m)$ is an almost minimizer for the functional $\int_D(|\D\uu|^2+2|\uu|)$, with gauge function $\omega(r)$, if for any ball $B_r(x_0)\Subset D$ with $0<r<r_0$ and for any $\vv\in \uu+W^{1,2}_0(B_r(x_0);\R^m)$ we have 
$$
\int_{B_r(x_0)}\left(|\D\uu|^2+2|\uu|\right)dx\le (1+\omega(r))\int_{B_r(x_0)}\left(|\D\vv|^2+2|\vv|\right)dx .
$$
\end{definition}
These almost minimizers can be seen as perturbations of minimizers, or solutions of \eqref{eq:sol}. Some examples of almost minimizers can be found in Appendix~\ref{sec:ex}.

The notion of almost minimizer was first introduced for the standard Dirichlet energy functional in \cite{Anz83}. Recently, almost minimizers for Alt-Caffarelli-type functionals were studied in \cite{DavTor15}, \cite{DavEngTor17}, \cite{DesSav20a}, \cite{DesSav20b} etc., and for the thin obstacle problems were considered in \cite{JeoPet20}, \cite{JeoPet21}, \cite{JeoPetSVG20}.

In this paper, we are interested in the regularity of almost minimizers of \eqref{functional} as well as the analysis of their free boundary.

%%%%%%%%%%%%%%%%%%%%%%%%%%%%%%%%%%%%%%%%%%%%%%%%%%

\subsection{Main results}\label{subsec:main-result}

Due to the technical nature of the problem, we restrict ourselves to the case when the gauge function $\omega(r)=r^\al$, $0<\al<2$. Because we are concerned with local properties of almost minimizers and free boundaries, we simply assume that $D$ is the unit ball $B_1$ and the constant $r_0=1$ in Definition~\ref{def:alm-min}.\\

We now state our first main result. Here and henceforth we use notation from Subsection \ref{N}.

\begin{theorem}[Regularity of almost minimizers]\label{thm:grad-u-holder}
Let $\uu\in W^{1,2}(B_1;\R^m)$ be an almost minimizer of \eqref{functional} in $B_1$. Then $\uu\in C^{1,\al/2}(B_1)$. Moreover, for any $K\Subset B_1$, $$
\|\uu\|_{C^{1,\al/2}(K)}\le C(n,\al,K)(E(\uu,1)^{1/2}+1).
$$
\end{theorem}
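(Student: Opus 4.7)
The plan is to follow the classical Anzellotti-type comparison scheme for almost minimizers: at every scale compare $\uu$ with its harmonic replacement, exploit the almost-minimality inequality together with the regularity of harmonic functions, and treat the singular but globally Lipschitz term $2|\uu|$ as a lower-order perturbation. The order of steps is a priori boundedness, then Morrey-type H\"older estimates with exponent arbitrarily close to $1$, then a Lipschitz estimate, and finally the $C^{1,\al/2}$ Campanato estimate.

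\emph{A priori bounds.} First I would establish that $\uu\in L^\infty_{\loc}(B_1)$ with norm controlled by $E(\uu,1)^{1/2}+1$. A De Giorgi-type truncation argument applied to $|\uu|$, using the admissible competitors obtained by rescaling $\uu$ toward $0$ on superlevel sets $\{|\uu|>k\}$, yields an $L^\infty$ bound in terms of the $L^2$ norms of $\uu$ and of $|\D\uu|$. Next, I would bootstrap to $\uu\in C^{0,\gamma}_{\loc}$ for every $\gamma<1$ via Morrey-type decay of $\int_{B_r}|\D\uu|^2$: comparison with the harmonic replacement combined with Poincar\'e shows this decays like $r^{n-\de}$ for any $\de>0$. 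Iterating the same comparison, using that $\uu$ is now H\"older continuous, upgrades the estimate to local Lipschitz regularity with constants depending only on $n$, $\al$, $K$ and $E(\uu,1)$.

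\emph{Key decay estimate.} Fix a ball $B_r(x_0)\Subset B_1$ and let $\h$ be the harmonic replacement of $\uu$ in $B_r(x_0)$. Almost-minimality applied to the competitor $\h$ rearranges to
\begin{equation*}
\int_{B_r(x_0)}|\D(\uu-\h)|^2\,dx\le \omega(r)\int_{B_r(x_0)}|\D\h|^2\,dx+2(1+\omega(r))\int_{B_r(x_0)}\bigl(|\h|-|\uu|\bigr)\,dx,
\end{equation*}
where the identity on the left uses $\int\D\h\cdot\D(\uu-\h)\,dx=0$, valid since $\h$ is harmonic and $\uu-\h\in W^{1,2}_0(B_r(x_0))$. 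Using $\bigl||\h|-|\uu|\bigr|\le|\h-\uu|$, Poincar\'e on $\uu-\h$, the maximum principle for $\h$, and the local Lipschitz bound on $\uu$ produced in the previous step, I can close the inequality and conclude $\int_{B_r(x_0)}|\D(\uu-\h)|^2\,dx\le C(\omega(r)+r^2)r^n\le C r^{n+\al}$, exploiting $\al<2$.

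\emph{Campanato iteration and conclusion.} Combining the previous estimate with the standard harmonic decay $\int_{B_\rho}|\D\h-(\D\h)_{\rho}|^2\,dx\le C(\rho/r)^{n+2}\int_{B_r}|\D\h-(\D\h)_r|^2\,dx$ and a dyadic iteration, the triangle inequality yields
\begin{equation*}
\int_{B_\rho(x_0)}|\D\uu-(\D\uu)_{x_0,\rho}|^2\,dx\le C\rho^{n+\al}\bigl(E(\uu,1)+1\bigr)
\end{equation*}
uniformly in $x_0\in K$ and small $\rho$. The Campanato embedding $\mathcal{L}^{2,n+\al}\hookrightarrow C^{0,\al/2}$ then gives $\D\uu\in C^{0,\al/2}(K)$ with the stated bound, equivalently $\uu\in C^{1,\al/2}(K)$.

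\emph{Main obstacle.} I expect the main difficulty to lie in the preliminary stages, namely in upgrading the regularity of $\uu$ to local Lipschitz. The integrand $|\uu|$ fails to be differentiable on $\{\uu=0\}$, so one cannot directly freeze coefficients as in classical Schauder theory, and the comparison scheme must be iterated: first a H\"older exponent arbitrarily close to $1$ is produced, and only then Lipschitz, each stage requiring careful bookkeeping of the dependence of constants on $n$, $\al$, $K$ and $E(\uu,1)$. Once the Lipschitz bound is in hand, the final jump to $C^{1,\al/2}$ via the comparison-plus-Campanato argument above is comparatively routine.
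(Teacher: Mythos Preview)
Your proposal follows essentially the same Anzellotti-type harmonic comparison scheme as the paper, and the key decay estimate and Campanato conclusion are correct. Two points are worth flagging.

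First, the De Giorgi $L^\infty$ step is unnecessary: the paper never establishes or uses a pointwise bound on $\uu$. All the preliminary work is carried out entirely at the level of the integral quantity $\int_{B_r}(|\D\uu|^2+|\uu|)$, and the Morrey iteration (Proposition~\ref{prop:Mor-est} plus the Han--Lin lemma) already yields $C^{0,\sigma}$ for every $\sigma<1$ directly from $E(\uu,1)$.

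Second, your ``upgrade to Lipschitz'' step is the one place where the outline is vague, and the paper handles it slightly differently from what you suggest. One cannot reach Lipschitz by iterating the Morrey argument for $\uu$ alone (the Han--Lin lemma only gives exponents strictly below $n$). Instead, the paper plugs the almost-Lipschitz Morrey bound with $\sigma=1-\al/4$ straight into the comparison estimate, obtaining $\int_{B_r}|\D(\uu-\h)|^2\le C r^{n+\al/2}$, and then runs the Campanato iteration for $\D\uu$ to get $\D\uu\in C^{0,\al/4}$; this already gives $\D\uu\in L^\infty$, hence Lipschitz. With Lipschitz in hand, the comparison improves to $\int_{B_r}|\D(\uu-\h)|^2\le C r^{n+\al}$ and one more pass of Campanato yields $C^{1,\al/2}$. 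So the bootstrap is $C^{0,1-\al/4}\to C^{1,\al/4}\to C^{1,\al/2}$, with Lipschitz arising as a byproduct of the intermediate Campanato step rather than as a separate stage.
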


Next, we consider a class of \emph{half-space} solutions 
\begin{equation*}\Hf := \{ \frac{\max(x\cdot \nu,0)^2}{2} \mathbf{e} : 
\nu \textrm{ is a unit vector in }\R^n \textrm{ and }
\mathbf{e} \textrm{ is a unit vector in }\R^m\}.\end{equation*}
Members of $\Hf$ are $2$-homogeneous global solutions of \eqref{eq:sol}. For 
\begin{equation*}
  M(\vv):= \int_{B_1} \left(|\nabla \vv|^2 + 2 |\vv|\right) -  2 \int_{\partial B_1} |\vv|^2,
\end{equation*}
we define 
\begin{equation*}\frac{\be_n}{2}:= M\left(\frac{\max(x\cdot \nu,0)^2}{2} \mathbf{e}\right).\end{equation*}

Now, we state a Weiss-type monotonicity formula for almost minimizers, which plays a significant role in the analysis of the free boundary.

\begin{theorem}[Weiss-type monotonicity formula]\label{thm:Weiss}
Let $\uu$ be an almost minimizer of \eqref{functional} in $B_1$. For $x_0\in B_{1/2}$, set $$
W(\uu,x_0,t):=\frac{e^{at^\al}}{t^{n+2}}\left[\int_{B_t(x_0)}\left(|\D\uu|^2+2|\uu|\right)-\frac{2(1-bt^\al)}t\int_{\p B_t(x_0)}|\uu|^2\right],
$$
with $$
a=\frac{n+2}\al,\quad b=\frac{n+4}\al.
$$
Then $W(\uu,x_0,t)$ is nondecreasing in $t$ for $0<t<t_0$. Moreover, if $x_0\in \Gamma(\uu)\cap B_{1/2}$ is a free boundary point, then we have either \begin{align}\label{eq:Weiss-limit-classify}
W(\uu,x_0,0+)= \be_n/2\quad\text{or}\quad W(\uu,x_0,0+)\ge \bar{\be}_n
\end{align}
for some $\bar{\be}_n>\be_n/2$.
\end{theorem}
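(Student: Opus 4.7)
The plan is to follow the standard almost-minimizer Weiss-monotonicity template, translating so that $x_0 = 0$ and comparing $\mathbf{u}$ on $B_t$ with the $2$-homogeneous extension of its trace. Write $f(t) := \int_{B_t}(|\nabla \mathbf{u}|^2 + 2|\mathbf{u}|)\,dx$ and $g(t) := \int_{\partial B_t}|\mathbf{u}|^2\,dS$, so that $f'(t) = \int_{\partial B_t}(|\nabla \mathbf{u}|^2 + 2|\mathbf{u}|)$ and, by polar coordinates, $g'(t) = \tfrac{n-1}{t} g(t) + 2\int_{\partial B_t}\mathbf{u}\cdot \mathbf{u}_\nu$. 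The natural admissible competitor is $\mathbf{v}_t(x) := (|x|/t)^2\,\mathbf{u}(tx/|x|)$, which matches $\mathbf{u}$ on $\partial B_t$.

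A direct computation in polar coordinates, using $|\nabla_\tau \mathbf{u}|^2 = |\nabla \mathbf{u}|^2 - |\mathbf{u}_\nu|^2$, gives
\[
\int_{B_t}(|\nabla \mathbf{v}_t|^2 + 2|\mathbf{v}_t|)\,dx \;=\; \frac{1}{(n+2)\,t}\left[4 g(t) + t^2 f'(t) - t^2\int_{\partial B_t}|\mathbf{u}_\nu|^2\right].
\]
Plugging this into the almost-minimizer inequality with gauge $\omega(r) = r^\alpha$ yields
\[
(n+2) f(t) \;\le\; (1 + t^\alpha)\Bigl[\tfrac{4}{t} g(t) + t f'(t) - t\int_{\partial B_t}|\mathbf{u}_\nu|^2\Bigr].
\]
Differentiating $W(\mathbf{u}, 0, t)$ directly and substituting the expression for $g'(t)$ produces
\[
t^{n+3} e^{-a t^\alpha}\, W'(\mathbf{u}, 0, t) \;=\; [t f'(t) - (n+2) f(t)] + \tfrac{8}{t} g(t) - 4\int_{\partial B_t}\mathbf{u}\cdot \mathbf{u}_\nu + \mathcal{E}_{a,b}(t),
\]
where $\mathcal{E}_{a,b}(t)$ gathers all terms carrying a $t^\alpha$ or $t^{2\alpha}$ factor coming from $e^{a t^\alpha}$ and $(1 - b t^\alpha)$. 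Feeding the almost-minimizer bound into the bracket $[tf'(t)-(n+2)f(t)]$, the $t^\alpha$-free part of the right-hand side reduces to the nonnegative square
\[
\tfrac{4}{t} g(t) + t\int_{\partial B_t}|\mathbf{u}_\nu|^2 - 4\int_{\partial B_t}\mathbf{u}\cdot \mathbf{u}_\nu \;=\; t\int_{\partial B_t}\Bigl|\mathbf{u}_\nu - \tfrac{2 \mathbf{u}}{t}\Bigr|^2 \;\ge\; 0,
\]
which is the heart of the classical Weiss monotonicity. A direct algebraic check then shows that the residual $t^\alpha$ contributions from almost-minimality and from $\mathcal{E}_{a,b}(t)$ cancel precisely when $a\alpha = n+2$ and $b\alpha = n+4$; the remaining $O(t^{2\alpha})$ terms are absorbed using crude upper bounds on $f$ and $g$ (as provided by Theorem \ref{thm:grad-u-holder}). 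This yields $W'(\mathbf{u},0,t) \ge 0$ for $0 < t < t_0$, with $t_0$ depending only on $n$, $\alpha$, and $\|\mathbf{u}\|$.

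For the dichotomy, monotonicity guarantees that $W(\mathbf{u}, x_0, 0+)$ exists. At $x_0 \in \Gamma(\mathbf{u}) \cap B_{1/2}$ consider the rescalings $\mathbf{u}_{x_0, r_k}(x) := \mathbf{u}(x_0 + r_k x)/r_k^2$ with $r_k \to 0$. The boundedness of $W(\mathbf{u}, x_0, \cdot)$ upgrades to quadratic growth of $\mathbf{u}$ at $x_0$, which combined with Theorem~\ref{thm:grad-u-holder} produces uniform $C^{1,\alpha/2}$ bounds on compact sets; hence, up to subsequence, $\mathbf{u}_{x_0, r_k} \to \mathbf{u}_0$ in $C^1_{\mathrm{loc}}(\R^n)$. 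Passing the almost-minimizer condition to the limit, the rescaled gauge $(r_k s)^\alpha$ vanishes, and so $\mathbf{u}_0$ is a genuine global minimizer of \eqref{functional}. Since $W(\mathbf{u}_0, 0, s) = \lim_k W(\mathbf{u}, x_0, r_k s) = W(\mathbf{u}, x_0, 0+)$ for every $s > 0$, the classical (unperturbed) Weiss formula for the minimizer $\mathbf{u}_0$ is constant in $s$; its equality case forces $\mathbf{u}_0$ to be $2$-homogeneous, and $W(\mathbf{u}, x_0, 0+) = M(\mathbf{u}_0)$. The free-boundary hypothesis implies $\mathbf{u}_0 \not\equiv 0$, whereupon the classification of nontrivial $2$-homogeneous global minimizers of \eqref{eq:sol} from \cite{AndShaUraWei15} gives the dichotomy: either $\mathbf{u}_0 \in \Hf$ with $M(\mathbf{u}_0) = \beta_n/2$, or $M(\mathbf{u}_0) \ge \bar\beta_n$ for some dimensional constant $\bar\beta_n > \beta_n/2$.

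The principal obstacle is the careful bookkeeping of $\mathcal{E}_{a,b}(t)$ and the verification that the specific choices $a = (n+2)/\alpha$, $b = (n+4)/\alpha$ exactly cancel — rather than merely dominate — the $t^\alpha$ perturbations generated by the gauge. A secondary delicate point is the compactness of the blow-up family: quadratic growth at free-boundary points is needed and must itself be extracted from the monotonicity formula proved above.
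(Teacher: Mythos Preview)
Your overall strategy matches the paper's: compare with the $2$-homogeneous competitor, differentiate $W$, and identify the square; for the dichotomy, pass to a $2$-homogeneous blowup and invoke \cite{AndShaUraWei15}. There are, however, two genuine gaps.

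\textbf{Monotonicity: the $t^\alpha$ terms do not cancel.} Only the $f$-contribution cancels at order $t^\alpha$ (this fixes $a\alpha=n+2$). With $a\alpha=n+2$ and $b\alpha=n+4$ one still has, after forming your ``clean'' square,
\[
t^{n+3}e^{-at^\alpha}W'(t)\;\ge\; t\!\int_{\partial B_t}\!\Bigl|\mathbf{u}_\nu-\tfrac{2}{t}\mathbf{u}\Bigr|^2
\;+\;(4-8b)\,\tfrac{t^\alpha}{t}\,g(t)\;+\;4b\,t^\alpha\!\int_{\partial B_t}\!\mathbf{u}\!\cdot\!\mathbf{u}_\nu\;+\;O(t^{2\alpha})(\cdots),
\]
and the two displayed $O(t^\alpha)$ terms neither cancel nor have a sign. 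They also cannot be ``absorbed using crude upper bounds on $f$ and $g$'': the square can vanish (take $\mathbf u$ exactly $2$-homogeneous), while these terms do not, so no smallness of $t$ will make the right-hand side nonnegative. The paper's remedy is to complete the square with the \emph{perturbed} coefficient $2(1-bt^\alpha)/t$ rather than $2/t$. The cross-term $4bt^\alpha\!\int\mathbf{u}\!\cdot\!\mathbf{u}_\nu$ is then exactly absorbed into the square, and the leftover $g$-coefficient becomes $4t^{\alpha-1}+O(t^{2\alpha-1})>0$ for $t<t_0(n,\alpha)$: a pure coefficient inequality, independent of $\mathbf{u}$. This is why the paper obtains
\[
W'(t)\;\ge\;\frac{e^{at^\alpha}}{t^{n+2}}\int_{\partial B_t}\Bigl|\mathbf{u}_\nu-\tfrac{2(1-bt^\alpha)}{t}\mathbf{u}\Bigr|^2
\]
with $t_0$ depending only on $n,\alpha$, not on $\|\mathbf u\|$.

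\textbf{Dichotomy: nontriviality of the blowup.} Your assertion ``the free-boundary hypothesis implies $\mathbf{u}_0\not\equiv 0$'' is not free. It requires a non-degeneracy estimate $\sup_{B_r(x_0)}|\mathbf{u}|\ge c_0 r^2$ at $x_0\in\Gamma(\mathbf u)$, proved in the paper as a separate step (Theorem~\ref{ND}, via comparison with the exact solution on small balls). Without it, the blowup could be identically zero and $W(\mathbf u,x_0,0+)=0$ would be a third alternative. You correctly flag quadratic growth as a delicate prerequisite; non-degeneracy is the matching lower bound and must also be supplied.
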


We are now able to define \emph{regular free boundary points}.

\begin{definition}\label{def:reg-set}
We say that a free boundary point $x_0\in \Gamma(\uu)$ is regular if $$W(\uu,x_0,0+)=\be_n/2.$$
We denote by $\mathcal{R}_\uu$ the set of all regular free boundary points of $\uu$.
\end{definition}

Our main result concerning the regularity of the free boundary is as follows.

\begin{theorem}[Regularity of the regular set]\label{thm:reg-set}
$\mathcal{R}_\uu$ is a relatively open subset of the free boundary $\Gamma(\uu)$ and locally a $C^{1,\gamma}$-manifold for some $\gamma=\gamma(n,\al,\kappa)>0$, where $\kappa$ is a constant as in Theorem~\ref{epi}.
\end{theorem}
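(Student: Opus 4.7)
My plan is to follow the standard Weiss/epiperimetric scheme for regularity of the regular free boundary, adapted to accommodate the $t^\al$ almost-minimizing defect in Definition~\ref{def:alm-min}.

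First I would establish that $\mathcal{R}_\uu$ is relatively open in $\Gamma(\uu)\cap B_{1/2}$. By Theorem~\ref{thm:Weiss} the map $t\mapsto W(\uu,x_0,t)$ is nondecreasing on $(0,t_0)$, and elementary $L^2$-continuous dependence (using Theorem~\ref{thm:grad-u-holder}) makes $x_0\mapsto W(\uu,x_0,t)$ continuous for each fixed $t$. Hence $x_0\mapsto W(\uu,x_0,0+)=\inf_t W(\uu,x_0,t)/e^{at^\al}$ is upper semicontinuous on $\Gamma(\uu)$. Combined with the gap in \eqref{eq:Weiss-limit-classify}, any free boundary point $y$ close enough to $x_0\in\mathcal{R}_\uu$ must still satisfy $W(\uu,y,0+)=\be_n/2$, so $y\in\mathcal{R}_\uu$.

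The key quantitative step is to extract a polynomial decay for $\psi(t):=W(\uu,x_0,t)-\be_n/2$ at every $x_0\in\mathcal{R}_\uu$. The idea is to differentiate $W(\uu,x_0,\cdot)$ and compare $\uu$ on $B_t(x_0)$ to the two-homogeneous extension $h_t$ of the trace of the rescaling $\uu_{x_0,t}(y):=t^{-2}\uu(x_0+ty)$ on $\p B_1$, via Definition~\ref{def:alm-min}. The epiperimetric inequality of Theorem~\ref{epi} then upgrades this competitor to $\vv$ with
\[
M(\vv)-\tfrac{\be_n}{2} \le (1-\kappa)\bigl(M(h_t)-\tfrac{\be_n}{2}\bigr),
\]
and combining this with the almost-minimizing slack of size $O(t^\al)$ yields a differential inequality of the form
\[
\psi'(t) \ge \frac{c\kappa}{t}\,\psi(t) - C\,t^{\al-1},
\]
whose integration gives $\psi(t)\le C t^{\gamma_0}$ for some $\gamma_0=\gamma_0(n,\al,\kappa)\in(0,\al)$, uniformly for $x_0\in\mathcal{R}_\uu\cap K$.

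This decay, combined with the standard identity linking $W$ to $L^2$-closeness and the non-degeneracy inherited from Theorem~\ref{thm:grad-u-holder}, produces a unique blow-up $h_{x_0}(x)=\tfrac12 \max(x\cdot\nu(x_0),0)^2 \mathbf{e}(x_0)\in\Hf$ at every $x_0\in\mathcal{R}_\uu$ and the bound
\[
\frac{1}{t^{n+4}}\int_{B_t(x_0)}|\uu-h_{x_0}|^2\,dx \le C\,t^{\gamma_0}.
\]
Applying this estimate at two nearby regular points and comparing $h_{x_0}$ with $h_{y_0}$ through $\uu$ on the intermediate scale $t\simeq|x_0-y_0|$ yields $|\nu(x_0)-\nu(y_0)|+|\mathbf{e}(x_0)-\mathbf{e}(y_0)|\le C|x_0-y_0|^\gamma$ with $\gamma=\gamma_0/2$. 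With the normal depending Hölder-continuously on the base point, a standard flatness/implicit function argument parametrises $\mathcal{R}_\uu$ locally as the graph of a $C^{1,\gamma}$ function. The main obstacle, as in the almost-minimizer theories of \cite{DavTor15,JeoPet21}, is the decay step: the epiperimetric inequality is a statement for genuine minimizers, and the comparison here holds only up to a factor $1+t^\al$, so one must verify that after paying the almost-minimizing loss at every dyadic scale the resulting exponent $\gamma_0$ remains \emph{strictly positive}. This in turn forces $\gamma_0<\al$ and requires the precise calibration of the constants $a,b$ in the definition of $W$, so that differentiating the exponential and polynomial prefactors exactly compensates the slack.
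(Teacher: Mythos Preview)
Your outline follows the same Weiss--epiperimetric scheme as the paper and gets the architecture right: upper semicontinuity plus the gap \eqref{eq:Weiss-limit-classify} for openness, a differential inequality for $W(\uu,x_0,t)-\be_n/2$ yielding polynomial decay, a rotation estimate giving uniqueness of blowups in $\Hf$, and then H\"older continuity of $\nu(\cdot)$ and $\mathbf{e}(\cdot)$ by comparing blowups at nearby points. Two points deserve attention.

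First, there is a genuine gap in the decay step: Theorem~\ref{epi} has a closeness hypothesis---it only applies when the $2$-homogeneous replacement $\cc_{x_0,t}$ satisfies $\|\cc_{x_0,t}-\h\|_{W^{1,2}\cap L^\infty}\le\delta$ for some $\h\in\Hf$. You invoke the epiperimetric inequality as if it holds for every $t$, but verifying this closeness uniformly for $x_0$ in a compact subset of $\mathcal{R}_\uu$ and all small $t$ is a separate compactness argument (the paper does this in Lemma~\ref{lem:rescaling-H-dist}, using that blowups at regular points lie in $\Hf$ together with Dini's theorem to get uniformity). Without it the differential inequality for $\psi$ is not justified. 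Second, two smaller issues: (i) the intermediate scale $t\simeq|x_0-y_0|$ is too large---with $\|\uu_{x_0,t}-\uu_{y_0,t}\|\lesssim |x_0-y_0|/t^2$ coming from the gradient bound, the optimal choice is $t\sim|x_0-y_0|^{2/(4+\gamma_0)}$, giving $\gamma=\gamma_0/(4+\gamma_0)$ rather than $\gamma_0/2$; (ii) the final ``flatness/implicit function'' step is where the non-degeneracy (Theorem~\ref{ND}, not Theorem~\ref{thm:grad-u-holder}) actually enters: the paper uses it in a cone argument to show $\{|\uu|>0\}$ and $\{\uu=0\}$ each contain a cone at every regular point, which yields the Lipschitz graph representation before upgrading to $C^{1,\gamma}$ via the H\"older normal.
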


Our proof is based on the Weiss-type monotonicity formula above and the epiperimetric inequality for solutions of \eqref{eq:sol}. The proof uses a procedure similar to the one for solutions developed in \cite{AndShaUraWei15}.
%%%%%%%%%%%%%%%%%%%%%%%%%%%%%%%%%%%%%%%%%%%%%%%%

\subsection{Notation}\label{N}
Throughout this paper $\R^n$ will be equipped with the Euclidean inner product $x\cdot y$ and the induced norm $|x|$. For $x_0\in\R^n$ and $r>0$, $B_r(x_0)$ means the open $n$-dimensional ball of radius $r$, centered at $x_0$ with boundary $\p B_r(x_0)$. We typically drop the center from the notation if it is the origin.\\
\indent When we consider a given set, we indicate by $\nu$ the unit outward normal vector to the boundary and by $\p_\theta \uu:=\D \uu-(\D \uu\cdot\nu)\nu$ the surface derivative of a given function $\uu$.\\
\indent In integrals, we usually drop the variable and the measure of the integration if it is with respect to the Lebesgue measure or the surface measure. For example, $$
\int_{B_r}\uu=\int_{B_r}\uu(x)\,dx,\quad\int_{\p B_r}\uu=\int_{\p B_r}\uu(x)\,dS_x,
$$
where $S_x$ stands for the surface measure.\\
\indent For a domain $D\subset \R^n$ and an integrable function $\uu$, we denote its integral mean value by $$
\mean{\uu}_D:=\fint_D\uu=\frac1{|D|}\int_D\uu.
$$
In particular, when $D=B_r(x_0)$, we simply write $$
\mean{\uu}_{x_0,r}:=\mean{\uu}_{B_r(x_0)}.
$$
For $\uu\in W^{1,2}(B_r)$, we set
$$
E(\uu,r):=\int_{B_r}\left(|\D\uu|^2+2|\uu|\right).
$$
If $\uu\in W^{1,2}(B_1)$, $x_0\in B_{1/2}$ and $0<r<1/2$, we denote the $2$-homogeneous recaling of $\uu$ by $$
\uu_{x_0,r}(x):=\frac{\uu(x_0+rx)}{r^2},\quad x\in B_{1/(2r)},
$$
and the $2$-homogeneous replacement of $\uu$ in $B_r$ (or equivalently $\uu_{x_0,r}$ in $B_1$) by $$
\cc_{x_0,r}:=|x|^2\uu_{x_0,r}\left(\frac{x}{|x|}\right)=\frac{|x|^2}{r^2}\uu\left(x_0+\frac{r}{|x|}x\right),\quad x\in \R^n.
$$
When $x_0=0$, we simply write $\uu_r$ and $\cc_r$
 for $\uu_{0,r}$ and $\cc_{0,r}$, respectively.

\noindent We also  indicate by $\Gamma(\uu):=\p\{|\uu|>0\}\cap\{\D\uu=0\}$ the free boundary.

\noindent Lastly, it may be worth noting that in the body of a proof, a constant may change from line to line and the dependence of the parameter of the problem is explicitly noted.

%%%%%%%%%%%%%%%%%%%%%%%%%%%%%%%%%%%%%

\section{Regularity of almost minimizers}

In this section, we follow the argument in \cite{Anz83} to obtain the regularity of almost minimizers.

\begin{proposition}\label{prop:Mor-est}
Let $\uu$ be an almost minimizer in $B_1$. Then, there is $C_0=C_0(n)>1$ such that \begin{align}
    \label{eq:alm-min-alm-sub-mean-prop}
    \int_{B_\rho(x_0)}\left(|\D\uu|^2+|\uu|\right)\le C_0\left[\left(\frac\rho r\right)^n+r^\al\right]\int_{B_r(x_0)}\left(|\D\uu|^2+|\uu|\right)+C_0r^{n+2}
\end{align}
whenever $ B_\rho(x_0)\subset B_{r}(x_0)\subset B_1$.
\end{proposition}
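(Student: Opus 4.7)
The plan is to follow Anzellotti's strategy: compare $\uu$ on $B_r(x_0)$ with its \emph{harmonic replacement} $\h$, whose energy has excellent Morrey-type decay, and then estimate the error $\uu-\h$ via almost minimality. Without loss of generality I will take $x_0=0$.

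\textbf{Step 1: harmonic replacement and almost minimality.} Let $\h\in\uu+W^{1,2}_0(B_r;\R^m)$ be componentwise harmonic in $B_r$. Using $\vv=\h$ in Definition~\ref{def:alm-min} and the Dirichlet orthogonality $\int_{B_r}|\D\uu|^2=\int_{B_r}|\D(\uu-\h)|^2+\int_{B_r}|\D\h|^2$ one finds, after cancellation,
\[
\int_{B_r}|\D(\uu-\h)|^2+2\int_{B_r}|\uu|\le \omega(r)\int_{B_r}|\D\h|^2+2(1+\omega(r))\int_{B_r}|\h|.
\]
Writing $|\h|\le |\uu|+|\uu-\h|$ on the right and moving $\int |\uu|$ across yields
\[
\int_{B_r}|\D(\uu-\h)|^2 \le \omega(r)\int_{B_r}\bigl(|\D\h|^2+2|\h|\bigr)+C\int_{B_r}|\uu-\h|.
\]

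\textbf{Step 2: estimating $\uu-\h$.} Since $\uu-\h\in W^{1,2}_0(B_r)$, Poincar\'e and Cauchy--Schwarz give $\int_{B_r}|\uu-\h|\le Cr^{n/2+1}\bigl(\int_{B_r}|\D(\uu-\h)|^2\bigr)^{1/2}$. A first Young absorption removes the $\|\D(\uu-\h)\|_{L^2}$ term. Then, using the Dirichlet-principle bound $\int|\D\h|^2\le \int|\D\uu|^2$ and $\int|\h|\le \int|\uu|+\int|\uu-\h|$ (again estimated as above), a second Young absorption at scale $r^{n+2}$ produces
\[
\int_{B_r}|\D(\uu-\h)|^2 \le Cr^{\al}\int_{B_r}\bigl(|\D\uu|^2+|\uu|\bigr)+Cr^{n+2}.
\]

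\textbf{Step 3: decay from the harmonic piece.} Each $h_i$ is harmonic, so $|\D h_i|^2$ is subharmonic ($\Delta|\D h_i|^2=2|\D^2 h_i|^2\ge 0$) and $|h_i|$ is subharmonic by Kato's inequality. The sub--mean-value property therefore gives, for every $\rho\le r$,
\[
\int_{B_\rho}|\D\h|^2\le C\Bigl(\frac{\rho}{r}\Bigr)^{n}\int_{B_r}|\D\h|^2,\qquad \int_{B_\rho}|\h|\le \Bigl(\frac{\rho}{r}\Bigr)^{n}\int_{B_r}|\h|.
\]

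\textbf{Step 4: combination.} Using $\uu=\h+(\uu-\h)$ and $\int_{B_\rho}|\D(\uu-\h)|^2\le \int_{B_r}|\D(\uu-\h)|^2$ (same for $|\uu-\h|$ plus Poincar\'e), one obtains
\[
\int_{B_\rho}\bigl(|\D\uu|^2+|\uu|\bigr)\le C\Bigl(\frac{\rho}{r}\Bigr)^{n}\int_{B_r}\bigl(|\D\h|^2+|\h|\bigr)+C\int_{B_r}|\D(\uu-\h)|^2+C\int_{B_r}|\uu-\h|.
\]
The first integral is bounded by $\int_{B_r}(|\D\uu|^2+|\uu|)$ (modulo an extra $\int|\uu-\h|$ that is harmless by Step 2), while Step 2 controls the $\uu-\h$ contributions by $Cr^{\al}\int_{B_r}(|\D\uu|^2+|\uu|)+Cr^{n+2}$. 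This gives \eqref{eq:alm-min-alm-sub-mean-prop}.

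\textbf{Main obstacle.} The chief nuisance is that the lower-order term $|\uu|$ is \emph{linear}, not quadratic, so there is no direct coercive cancellation. Controlling $\int_{B_r}|\h|$ forces one to pass through $\int|\uu-\h|$ via Poincar\'e, which produces terms of the form $r^{n/2+1}\|\D(\uu-\h)\|_{L^2}$; these have to be reabsorbed by Young's inequality and the bookkeeping must land the error on the right at precisely the scale $r^{n+2}$. Getting this scaling right--rather than a worse power of $r$--is the delicate point of the argument.
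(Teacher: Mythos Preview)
Your proposal is correct and follows essentially the same route as the paper: harmonic replacement $\h$, almost minimality plus Poincar\'e--Young to bound $\int_{B_r}(|\D(\uu-\h)|^2+|\uu-\h|)$ by $Cr^\al\int_{B_r}(|\D\uu|^2+|\uu|)+Cr^{n+2}$, sub-mean-value for the harmonic piece, and then the triangle-inequality combination. One cosmetic remark: in Step~3 you want $|\h|$ (the vector norm) subharmonic, not just each $|h_i|$; this holds because $|\cdot|$ is a convex function of the harmonic components, and it is exactly what the paper invokes.
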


\begin{proof}
Let $\h\in W^{1,2}(B_r(x_0);\R^m)$ be such that $\Delta\h=\mathbf0$ in $B_r(x_0)$ and $v=u$ on $\p B_r(x_0)$. Since $|\h|$ and $|\D \h|^2$ are subharmonic, they satisfy the sub-mean value properties \begin{align}
    \label{eq:sub-mean-prop}
    \int_{B_\rho(x_0)}|\h|\le \left(\frac\rho r\right)^n\int_{B_r(x_0)}|\h|,\quad \int_{B_\rho(x_0)}|\D\h|^2\le \left(\frac\rho r\right)^n\int_{B_r(x_0)}|\D\h|^2.
\end{align}
Moreover, it follows from the weak equation $\int_{B_r(x_0)}\D\h\D(\uu-\h)=0$ and the almost minimizing property of $\uu$ that \begin{align}
    \label{eq:grad-(u-h)-est}\begin{split}
    \int_{B_r(x_0)}|\D(\uu-\h)|^2&=\int_{B_r(x_0)}|\D\uu|^2-|\D\h|^2\\
    &\le \int_{B_r(x_0)}r^\al|\D\h|^2+2(1+r^\al)|\h|-2|\uu|\\
    &=\int_{B_r(x_0)}r^\al|\D\h|^2+2(1+r^\al)(|\h|-|\uu|)+2r^\al|\uu|\\
    &\le \int_{B_r(x_0)}r^\al|\D\uu|^2+2(1+r^\al)|\h-\uu|+2r^\al|\uu|,
\end{split}\end{align}
where in the last inequality we used the fact that $\h$ is the energy minimizer of the Dirichlet integral $\int_{B_r(x_0)}|\D\h|^2$.
We also apply Poincar\`e inequality and Young's inequality to obtain
\begin{align*}
    5\int_{B_r(x_0)}|\uu-\h|&\le C(n)r\int_{B_r(x_0)}|\D(\uu-\h)|\le \frac12\int_{B_r(x_0)}|\D(\uu-\h)|^2+C(n)r^{n+2}.
\end{align*}
From this and \eqref{eq:grad-(u-h)-est}, we have \begin{align*}
    &\int_{B_r(x_0)}\left(|\D(\uu-\h)|^2+5|\uu-\h|\right)\\
    &\le \int_{B_r(x_0)}\left(\frac12|\D(\uu-\h)|^2+2(1+r^\al)|\uu-\h|+r^\al|\D\uu|^2+2r^\al|\uu|\right)+C(n)r^{n+2},
\end{align*}
From $B_r(x_0)\subset B_1$, we see that $0<r<1$, thus $2(1+r^\al)\le4$. Then, it follows that
\begin{align}
    \label{eq:(u-h)-diff-est}
    \int_{B_r(x_0)}\left(|\D(\uu-\h)|^2+|\uu-\h|\right)\le C(n)r^\al\int_{B_r(x_0)}\left(|\D\uu|^2+|\uu|\right)+C(n)r^{n+2}.
\end{align}
Now, by combining \eqref{eq:sub-mean-prop} and \eqref{eq:(u-h)-diff-est}, we obtain that for $0<\rho<r<r_0$, \begin{align*}
    &\int_{B_\rho(x_0)}\left(|\D \uu|^2+|\uu|\right)\\
    &\qquad\le 2\int_{B_\rho(x_0)}\left(|\D\h|^2+|\D(\uu-\h)|^2+|\h|+|\uu-\h|\right)\\
    &\qquad\le 2\left(\frac\rho r\right)^n\int_{B_r(x_0)}\left(|\D\h|^2+|\h|\right)+2\int_{B_\rho(x_0)}\left(|\D(\uu-\h)|^2+|\uu-\h|\right)\\
    &\qquad\le 4\left(\frac\rho r\right)^n\int_{B_r(x_0)}\left(|\D\uu|^2+2|\uu|+|\D(\uu-\h)|^2+2|\uu-\h|\right)\\
    &\qquad\qquad+2\int_{B_\rho(x_0)}\left(|\D(\uu-\h)|^2+|\uu-\h|\right)\\
    &\qquad\le 4\left(\frac\rho r\right)^n\int_{B_r(x_0)}\left(|\D\uu|^2+2|\uu|\right)+10\int_{B_r(x_0)}\left(|\D(\uu-\h)|^2+|\uu-\h|\right)\\
    &\qquad\le C(n)\left[\left(\frac\rho r\right)^n+r^\al\right]\int_{B_r(x_0)}\left(|\D\uu|^2+|\uu|\right)+C(n)r^{n+2}.\qedhere
\end{align*}
\end{proof}

From here, we deduce the almost Lipschitz regularity of $\uu$ with the help of the following lemma, whose proof can be found in \cite{HanLin97}.

\begin{lemma}\label{lem:HL}
  Let $r_0>0$ be a positive number and let
  $\vp:(0,r_0)\to (0, \infty)$ be a nondecreasing function. Let $a$,
  $\beta$, and $\gamma$ be such that $a>0$, $\gamma >\beta >0$. There
  exist two positive numbers $\e=\e(a,\gamma,\beta)$,
  $c=c(a,\gamma,\beta)$ such that, if
$$
\vp(\rho)\le
a\Bigl[\Bigl(\frac{\rho}{r}\Bigr)^{\gamma}+\e\Bigr]\vp(r)+b\, r^{\be}
$$ for all $\rho$, $r$ with $0<\rho\leq r<r_0$, where $b\ge 0$,
then one also has, still for $0<\rho<r<r_0$,
$$
\vp(\rho)\le
c\Bigl[\Bigl(\frac{\rho}{r}\Bigr)^{\be}\vp(r)+b\rho^{\be}\Bigr].
$$
\end{lemma}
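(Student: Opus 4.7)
My plan is to run the standard dyadic iteration argument that makes a small improvement at each scale and then sums the error as a geometric series. Fix an auxiliary exponent $\beta'$ with $\beta<\beta'<\gamma$. The gap between $\gamma$ and $\beta$ is what gives us room; without it the geometric sum below would diverge.

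First I would choose the ratio $\tau\in(0,1)$ by demanding $2a\tau^{\gamma-\beta'}\le 1$, and then fix $\varepsilon=\varepsilon(a,\gamma,\beta)$ so small that $a\varepsilon\le \tfrac12\tau^{\beta'}$. With these choices the hypothesis, applied with $\rho=\tau r$, yields the one-step inequality
\[
\varphi(\tau r)\le a(\tau^\gamma+\varepsilon)\varphi(r)+br^{\beta}\le \tau^{\beta'}\varphi(r)+br^{\beta},\qquad 0<r<r_0.
\]

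Next I iterate: applying the one-step bound with $r$ replaced by $\tau^jr$ and arguing by induction on $k$ gives
\[
\varphi(\tau^{k}r)\le \tau^{k\beta'}\varphi(r)+br^{\beta}\sum_{j=0}^{k-1}\tau^{(k-1-j)\beta'+j\beta}.
\]
Factoring $\tau^{(k-1)\beta}$ out of the sum turns it into the geometric series $\sum_{i=0}^{k-1}\tau^{i(\beta'-\beta)}$, which converges since $\beta'>\beta$. Thus, with $C_\tau:=\tau^{-\beta}/(1-\tau^{\beta'-\beta})$ and using $\tau^{k\beta'}\le \tau^{k\beta}$, I obtain
\[
\varphi(\tau^{k}r)\le \tau^{k\beta}\varphi(r)+C_\tau\, b\,(\tau^{k}r)^{\beta}.
\]

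Finally, for an arbitrary $\rho\in(0,r)$ I pick the integer $k\ge 0$ with $\tau^{k+1}r<\rho\le\tau^{k}r$ and use monotonicity of $\varphi$ to pass from $\varphi(\rho)\le\varphi(\tau^{k}r)$ to a bound in terms of $\rho$; the factors $\tau^{-\beta}$ incurred in comparing $\tau^{k}r$ with $\rho$ are absorbed into a final constant $c=c(a,\gamma,\beta)$, giving exactly the asserted estimate $\varphi(\rho)\le c[(\rho/r)^{\beta}\varphi(r)+b\rho^{\beta}]$. The only subtle step is the choice of the intermediate exponent $\beta'$: everything else is bookkeeping, but without separating $\beta'$ from $\beta$ the iteration would produce a diverging $k$-term rather than a geometric series.
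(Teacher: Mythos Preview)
Your argument is correct and is precisely the standard dyadic iteration found in Han--Lin; the paper does not give its own proof but simply cites \cite{HanLin97}, so your proposal matches the intended reference. The only cosmetic point is that one could take $\beta'=(\beta+\gamma)/2$ explicitly to make the dependence $\varepsilon=\varepsilon(a,\gamma,\beta)$ and $c=c(a,\gamma,\beta)$ transparent, but this changes nothing substantive.
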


\begin{theorem}\label{thm:alm-Lip-reg}
Let $\uu$ be an almost minimizer in $B_1$. Then $\uu\in C^{0,\si}(B_1)$ for all $0<\si<1$. Moreover, for any $K\Subset B_1$, $$
\|\uu\|_{C^{0,\si}(K)}\le C\left(E(\uu,1)^{1/2}+1\right)
$$
with $C=C(n,\al,\si,K)$.
\end{theorem}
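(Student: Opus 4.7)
The plan is a classical Morrey iteration: I would combine the Morrey-type decay of Proposition \ref{prop:Mor-est} with the iteration Lemma \ref{lem:HL} to upgrade the former into a uniform Dirichlet growth bound, and then invoke the Morrey--Campanato characterization of Hölder spaces to conclude.

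Concretely, fix $\si\in(0,1)$, $K\Subset B_1$, and set $d:=\dist(K,\p B_1)$. For $x_0\in K$, I would introduce $\vp(\rho):=\int_{B_\rho(x_0)}(|\D\uu|^2+|\uu|)$, rewrite Proposition \ref{prop:Mor-est} as
$$\vp(\rho)\le C_0\Bigl[\Bigl(\frac{\rho}{r}\Bigr)^n+r^\al\Bigr]\vp(r)+C_0 r^{n+2},\qquad 0<\rho\le r<d,$$
and set $\be:=n-2+2\si\in(0,n)$. The key observation is that, since $r<1$ and $n+2>\be$, the forcing term satisfies $C_0 r^{n+2}\le C_0 r^\be$, placing the inequality in the exact format of Lemma \ref{lem:HL} with $\gamma=n$ and $b=C_0$. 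To enforce the universal smallness condition $r^\al\le\e(C_0,n,\be)$, I would restrict to $r\le r_*$ with $r_*^\al\le\e$; such $r_*$ depends only on $n,\al,\si,d$. Lemma \ref{lem:HL} then yields
$$\vp(\rho)\le c\bigl[(\rho/r_*)^\be\vp(r_*)+C_0\rho^\be\bigr]\le C(E(\uu,1)+1)\rho^{n-2+2\si},\qquad 0<\rho<r_*,\ x_0\in K,$$
using $\vp(r_*)\le E(\uu,1)$.

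Combined with Poincaré's inequality, this Dirichlet growth gives the Campanato estimate $\int_{B_\rho(x_0)}|\uu-\mean{\uu}_{x_0,\rho}|^2\le C(E(\uu,1)+1)\rho^{n+2\si}$, from which Campanato's theorem produces the Hölder seminorm bound $[\uu]_{C^{0,\si}(K)}\le C(E(\uu,1)^{1/2}+1)$ (using $\sqrt{E+1}\le\sqrt{E}+1$). For the $L^\infty$ part of the $C^{0,\si}$-norm, I would combine this seminorm with the pointwise comparison $|\uu(x_0)|\le|\mean{\uu}_{x_0,r_*}|+[\uu]_{C^{0,\si}(K)}r_*^\si$ and a standard bound on $|\mean{\uu}_{x_0,r_*}|$ in terms of $E(\uu,1)^{1/2}$ (via Sobolev/Caccioppoli on a fixed scale), yielding the stated $\|\uu\|_{C^{0,\si}(K)}\le C(E(\uu,1)^{1/2}+1)$.

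No single step is genuinely difficult here: the whole argument is the textbook Morrey iteration for $\omega$-minima, and the main work has already been done in Proposition \ref{prop:Mor-est}. The two subtleties worth flagging are (i) the harmless absorption $C_0 r^{n+2}\le C_0 r^\be$ enabled by $r<1$, which lets the forcing term fit the format of Lemma \ref{lem:HL}, and (ii) the freedom to shrink the scale so that the gauge contribution $r^\al$ falls below the universal threshold $\e$. Thanks to (ii), every exponent $\si<1$ is accessible, regardless of how small the gauge exponent $\al$ is.
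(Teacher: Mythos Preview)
Your proposal is correct and follows essentially the same route as the paper: both restrict the scale so that the gauge term $r^\al$ drops below the threshold $\e$ of Lemma~\ref{lem:HL}, absorb the forcing term $C_0r^{n+2}$ into $C_0r^{n+2\si-2}$ using $r<1$, apply Lemma~\ref{lem:HL} with $\gamma=n$ and $\be=n+2\si-2$, and then invoke Morrey/Campanato embedding. The only cosmetic difference is that the paper cites Morrey space embedding directly from the gradient bound, whereas you pass through Poincar\'e and Campanato; these are equivalent formulations of the same result.
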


\begin{proof}
For given $K\Subset B_1$ and $x_0\in K$, take $\de=\de(n,\al,\si,K)>0$ such that $\de<\dist(K,\p B_1)$ and $\de^\al\le \e(C_0,n,n+2\si-2)$, where $C_0=C_0(n)$ is as in Proposition~\ref{prop:Mor-est} and $\e=\e(C_0,n,n+2\si-2)$ is as in Lemma~\ref{lem:HL}. Then, by \eqref{eq:alm-min-alm-sub-mean-prop}, for $0<\rho<r<\de$, \begin{align*}
    \int_{B_\rho(x_0)}\left(|\D\uu|^2+|\uu|\right)\le C_0\left[\left(\frac\rho r\right)^n+\e\right]\int_{B_r(x_0)}\left(|\D\uu|^2+|\uu|\right)+C_0r^{n+2\si-2}.
\end{align*}
By applying Lemma~\ref{lem:HL}, we obtain \begin{align*}
    \int_{B_\rho(x_0)}\left(|\D \uu|^2+|\uu|\right)\le C(n,\si)\left[\left(\frac\rho r\right)^{n+2\si-2}\int_{B_r(x_0)}\left(|\D\uu|^2+|\uu|\right)+\rho^{n+2\si-2}\right] .
\end{align*}
Taking $r\nearrow\de(n,\al,\si,K)$, we get \begin{align}\label{eq:alm-min-Morrey-est}
\int_{B_\rho(x_0)}\left(|\D\uu|^2+|\uu|\right)\le C(n,\al,\si,K)\left(E(\uu,1)+1\right)\rho^{n+2\si-2} ,
\end{align}
for $0<\rho<\de$. In particular, we have $$
\int_{B_\rho(x_0)}|\D\uu|^2\le C(n,\al,\si,K)\left(E(\uu,1)+1\right)\rho^{n+2\si-2},
$$
and by Morrey space embedding we conclude $\uu\in C^{0,\si}(K)$ with \begin{equation*}
\|\uu\|_{C^{0,\si}(K)}\le C(n,\al,\si,K)\left(E(\uu,1)^{1/2}+1\right).\qedhere
\end{equation*}
\end{proof}

Now we prove Theorem~\ref{thm:grad-u-holder} using the above almost Lipschitz estimate of almost minimizers.

\begin{proof}[Proof of Theorem~\ref{thm:grad-u-holder}]
For $K\Subset B_1$, fix a small $r_0=r_0(n,\al,K)>0$ to be chosen later. Particularly, we ask $r_0<\dist(K,\p B_1)$. For $x_0\in K$ and $0<r<r_0$, let $\h\in W^{1,2}(B_r(x_0);\R^m)$ be a harmonic function such that $\h=\uu$ on $\p B_r(x_0)$. Then, by \eqref{eq:(u-h)-diff-est} and \eqref{eq:alm-min-Morrey-est} with $\si=1-\al/4\in (0,1)$, \begin{align}\begin{split}\label{eq:grad-(u-h)-est-2}
    \int_{B_r(x_0)}|\D(\uu-\h)|^2&\le C(n)r^\al\int_{B_r(x_0)}\left(|\D\uu|^2+|\uu|\right)+C(n)r^{n+2}\\
    &\le C(n,\al,K)\left(E(\uu,1)+1\right)r^{n+\al/2}+C(n)r^{n+2}\\
    &\le C(n,\al,K)\left(E(\uu,1)+1\right)r^{n+\al/2} ,
\end{split}\end{align}
for $0<r<r_0(n,\al,K)$. Note that since $\h$ is harmonic in $B_r(x_0)$, for $0<\rho<r$ $$
\int_{B_\rho(x_0)}|\D\h-\mean{\D\h}_{x_0,\rho}|^2\le \left(\frac\rho r\right)^{n+2}\int_{B_r(x_0)}|\D\h-\mean{\D\h}_{x_0,r}|^2.
$$
Moreover, by Jensen's inequality, \begin{align*}
    &\int_{B_\rho(x_0)}|\D\uu-\mean{\D\uu}_{x_0,\rho}|^2\\
    &\qquad\le 3\int_{B_\rho(x_0)}|\D\h-\mean{\D\h}_{x_0,\rho}|^2+|\D(\uu-\h)|^2+|\mean{\D(\uu-\h)}_{x_0,\rho}|^2\\
    &\qquad\le 3\int_{B_\rho(x_0)}|\D\h-\mean{\D\h}_{x_0,\rho}|^2+6\int_{B_\rho(x_0)}|\D(\uu-\h)|^2,
\end{align*}
and similarly, $$
\int_{B_r(x_0)}|\D\h-\mean{\D\h}_{x_0,r}|^2\le 3\int_{B_r(x_0)}|\D \uu-\mean{\D\uu}_{x_0,r}|^2+6\int_{B_r(x_0)}|\D(\uu-\h)|^2.
$$
Now, we use the above inequalities to obtain \begin{align*}
    &\int_{B_\rho(x_0)}|\D\uu-\mean{\D\uu}_{x_0,\rho}|^2\\
    &\qquad\le 3\int_{B_\rho(x_0)}|\D\h-\mean{\D\h}_{x_0,\rho}|^2+6\int_{B_\rho(x_0)}|\D(\uu-\h)|^2\\
    &\qquad\le 3\left(\frac\rho r\right)^{n+2}\int_{B_r(x_0)}|\D\h-\mean{\D\h}_{x_0,r}|^2+6\int_{B_\rho(x_0)}|\D(\uu-\h)|^2\\
    &\qquad\le 9\left(\frac\rho r\right)^{n+2}\int_{B_r(x_0)}|\D \uu-\mean{\D \uu}_{x_0,r}|^2+24\int_{B_r(x_0)}|\D(\uu-\h)|^2\\
    &\qquad\le 9\left(\frac\rho r\right)^{n+2}\int_{B_r(x_0)}|\D\uu-\mean{\D\uu}_{x_0,r}|^2+C(n,\al,K)\left(E(\uu,1)+1\right)r^{n+\al/2}.
\end{align*}
Thus, applying Lemma~\ref{lem:HL} we get \begin{align*}
    \int_{B_\rho(x_0)}|\D\uu-\mean{\D\uu}_{x_0,\rho}|^2&\le C(n,\al)\left(\frac\rho r\right)^{n+\al/2}\int_{B_r(x_0)}|\D\uu-\mean{\D\uu}_{x_0,r}|^2\\
    &\qquad+C(n,\al,K)\left(E(\uu,1)+1\right)\rho^{n+\al/2} ,
\end{align*}
for $0<\rho<r<r_0$. Taking $r\nearrow r_0(n,\al,K)$, we have \begin{align*}
\int_{B_\rho(x_0)}|\D\uu-\mean{\D\uu}_{x_0,\rho}|^2\le C(n,\al,K)\left(E(\uu,1)+1\right)\rho^{n+\al/2}.
\end{align*}
By Campanato space embedding, we obtain $\D\uu\in C^{0,\al/4}(K)$ with $$
\|\D\uu\|_{C^{0,\al/4}(K)}\le C(n,\al,K)(E(\uu,1)^{1/2}+1).
$$
In particular, we have $$
\|\D\uu\|_{L^\infty(K)}\le C(n,\al,K)(E(\uu,1)^{1/2}+1) ,
$$
for any $K\Subset B_1$. With this estimate, we can improve \eqref{eq:grad-(u-h)-est-2} \begin{align*}
    \int_{B_r(x_0)}|\D(\uu-\h)|^2&\le C(n)r^\al\int_{B_r(x_0)}\left(|\D\uu|^2+|\uu|\right)+C(n)r^{n+2}\\
    &\le C(n,\al,K)\left(E(\uu,1)+1\right)r^{n+\al}+C(n)r^{n+2}\\
    &\le C(n,\al,K)\left(E(\uu,1)+1\right)r^{n+\al},
\end{align*}
and by repeating the process above we conclude that $\D\uu\in C^{1,\al/2}(K)$ with \begin{equation*}
\|\D\uu\|_{C^{0,\al/2}(K)}\le C(n,\al,K)(E(\uu,1)^{1/2}+1).\qedhere
\end{equation*}
\end{proof}

%%%%%%%%%%%%%%%%%%%%%%%%%%%%%%%%%%%%%%%%%%%%

\section{Weiss-type monotonicity formula}
In the rest of the paper, we study the free boundary of almost minimizers. This section is devoted to proving the monotonicity of the Weiss-type functional introduced in Theorem~\ref{thm:Weiss}. It is obtained by comparing almost minimizers with homogeneous functions of degree $2$ with some algebraic manipulation. This argument goes back to \cite{Wei99} in the case of the classical obstacle problem, and recently to \cite{JeoPet21} for almost minimizers of the thin obstacle problem.

\begin{theorem}\label{thm:weiss-mon}
Let $\uu$ be an almost minimizer in $B_1$. Then, for $x_0\in B_{1/2}$ and $0<t<t_0(n,\al)$, $$
\frac{d}{dt}W(\uu,x_0,t)\ge\frac{e^{at^\al}}{t^{n+2}}\int_{\p B_t(x_0)}\left|\p_\nu\uu-\frac{2(1-bt^\al)}t\uu\right|^2.
$$
In particular, $W(\uu,x_0,t)$ is nondecreasing in $t$ for $0<t<t_0$.
\end{theorem}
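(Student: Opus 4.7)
By translation we may assume $x_0 = 0$. The plan, going back to Weiss's original argument for the obstacle problem and extended to the almost minimizer setting in \cite{JeoPet21}, is to test the almost minimality of $\uu$ on $B_t$ against its $2$-homogeneous replacement $\cc_t$, and then to arrange the resulting inequality so that after the exponential and linear corrections built into $W$ it becomes the claimed lower bound for $\frac{d}{dt}W(\uu,0,t)$.

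The first step is to collect homogeneity identities for $\cc_t$. Since $\cc_t$ agrees with $\uu$ on $\partial B_t$ and is $2$-homogeneous, Fubini in polar coordinates yields
\begin{equation*}
\int_{B_t}|\D\cc_t|^2 = \frac{t}{n+2}\int_{\partial B_t}|\D\cc_t|^2,\qquad \int_{B_t}|\cc_t| = \frac{t}{n+2}\int_{\partial B_t}|\uu|,
\end{equation*}
while the $2$-homogeneity forces $\partial_\nu \cc_t = (2/t)\uu$ and $\partial_\theta \cc_t = \partial_\theta \uu$ on $\partial B_t$, hence $|\D\cc_t|^2 = |\D\uu|^2 - |\partial_\nu\uu|^2 + (4/t^2)|\uu|^2$ there. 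Plugging these into the almost minimality of $\uu$ tested against $\cc_t$ and setting $E(t):=\int_{B_t}(|\D\uu|^2+2|\uu|)$, I obtain an inequality of the schematic form
\begin{equation*}
\frac{n+2}{t}E(t)\le (1+t^\alpha)\int_{\partial B_t}\Big(|\D\uu|^2+2|\uu|-|\partial_\nu\uu|^2+\tfrac{4}{t^2}|\uu|^2\Big).
\end{equation*}

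Next I would compute $\frac{d}{dt}W(\uu,0,t)$ directly, using $E'(t)=\int_{\partial B_t}(|\D\uu|^2+2|\uu|)$ and the polar identity $\frac{d}{dt}\int_{\partial B_t}|\uu|^2 = \frac{n-1}{t}\int_{\partial B_t}|\uu|^2+2\int_{\partial B_t}\uu\cdot\partial_\nu\uu$; the $t$-derivatives of the exponential $e^{at^\alpha}$ and of the factor $(1-bt^\alpha)/t$ contribute the additional $t^{\alpha-1}$ pieces. Substituting the displayed inequality above to control $E(t)$, the bulk contributions $E'(t)/t^{n+2}-(n+2)E(t)/t^{n+3}$ collapse and only boundary integrals remain. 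Grouping the $|\partial_\nu\uu|^2$, $\uu\cdot\partial_\nu\uu$ and $|\uu|^2$ terms assembles precisely the complete square
\begin{equation*}
\int_{\partial B_t}\Big|\partial_\nu\uu-\frac{2(1-bt^\alpha)}{t}\uu\Big|^2,
\end{equation*}
provided the surviving $t^\alpha$-errors, one arising from the $(1+t^\alpha)$ defect of almost minimality (multiplying the boundary energy), another coming from the $1/t$ differentiated against $\int_{\partial B_t}|\uu|^2$, can be absorbed by the negative terms produced by differentiating $e^{at^\alpha}$ and $(1-bt^\alpha)/t$.

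The main obstacle is purely algebraic bookkeeping: one has to verify that the specific choices $a=(n+2)/\alpha$ and $b=(n+4)/\alpha$ are exactly those for which each $O(t^{\alpha-1})$ remainder is dominated by the corresponding positive contribution from $a\alpha t^{\alpha-1}e^{at^\alpha}$ (which absorbs the $E$-type error) and from $-b\alpha t^{\alpha-1}$ in $\frac{d}{dt}((1-bt^\alpha)/t)$ (which absorbs the $H$-type error), for all sufficiently small $t<t_0(n,\alpha)$. Once this tuning is confirmed, the lower bound for $dW/dt$ drops out, and monotonicity of $W$ on $(0,t_0)$ follows by integration.
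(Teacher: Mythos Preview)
Your proposal is correct and follows essentially the same route as the paper: test almost minimality against the $2$-homogeneous replacement $\cc_t$ to obtain an upper bound on $E(\uu,t)$ in terms of boundary data, differentiate $W$ directly, substitute, and complete the square, with the choices $a=(n+2)/\alpha$ and $b=(n+4)/\alpha$ tuned precisely so that the $O(t^{\alpha-1})$ error terms are absorbed for small $t$. The paper organizes the same computation slightly differently---writing the almost-minimality inequality as $(1-r^\alpha)E(\uu,r)\le E(\cc_r,r)$ so that the factor $(1-r^\alpha)$ matches $\frac{d}{dr}(e^{ar^\alpha}r^{-n-2})$ exactly---but this is only a cosmetic rearrangement of your outline.
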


\begin{proof}The proof can be obtained with similar arguments as in \cite{JeoPet21}, and we sketch it below. Assume without loss of generality that $x_0=0.$ Then, for the $2$-homogeneous replacement $\cc_r$ of $\uu$ in $B_r$,
a standard computation (see Theorem 5.1 in \cite{JeoPet21}) gives that
$$\int_{B_r} |\nabla \cc_r|^2 =  \frac{r}{n+2} \int_{\p B_r} \left(|\nabla \uu|^2 - |\p_\nu \uu|^2 + \frac{4}{r^2} |\uu|^2\right).$$ Using that $\uu$ is almost minimizing and $\uu=\cc_r$ on $\p B_r,$ we deduce that,
\begin{align*}\int_{\p B_r} |\uu|^2 &= \frac{r^2}{4}\left(\frac{n+2}{r}E(\cc_r,r) - 2 \frac{n+2}{r}\int_{B_r}|\cc_r| - \int_{\p B_r}\left(|\nabla \uu|^2 - |\p_\nu \uu|^2\right)\right)\\
& \geq \frac{r^2}{4}\left(\frac{n+2}{r}(1-r^{\alpha})E(\uu,r) - 2 \frac{n+2}{r}\int_{B_r}|\cc_r| - \int_{\p B_r}\left(|\nabla \uu|^2 - |\p_\nu \uu|^2\right)\right) \\
& \geq \frac{r^2}{4}\left(\frac{n+2}{r}(1-r^{\alpha})E(\uu,r) - 2 \int_{\p B_r}|\uu| - \int_{\p B_r}\left(|\nabla \uu|^2 - |\p_\nu \uu|^2\right)\right),
\end{align*} where in the last line we used the definition of $\cc_r.$ 
From this we deduce that,
\begin{equation}\label{bb}
E(\uu,r) \leq \frac{r}{(n+2)(1-r^\alpha)} \left(\frac{4}{r^2} \int_{\p B_r} |\uu|^2 + 2 \int_{\p B_r}|\uu| + \int_{\p B_r}\left(|\nabla \uu|^2 - |\p_\nu \uu|^2\right)\right).
\end{equation}

We now set
$$\psi(r):= \frac{2e^{ar^\alpha} (1-br^\alpha)}{r^{n+3}},$$ so that
$$W(r):=W(\uu, 0, r)= e^{ar^\alpha}r^{-n-2}E(\uu, r) - \psi(r)\int_{\p B_r}|\uu|^2,$$ 
and
\begin{equation}\label{deriv}
\frac{d}{dr} W(r)=\frac{d}{dr}(e^{ar^\alpha}r^{-n-2})E(\uu,r) + e^{ar^\alpha}r^{-n-2} \int_{\p B_r}(|\nabla \uu|^2+ 2|\uu|) + \Phi(r) ,
\end{equation}
with 
$$\Phi(r):= -\frac{d}{dr}\left (\psi(r)\int_{\p B_r}|\uu|^2\right) .$$
 On the other hand,
$$\frac{d}{dr}(e^{ar^\alpha}r^{-n-2})E(\uu,r)=-(n+2)e^{ar^\alpha}r^{-n-3}(1-r^{\alpha})E(\uu,r),$$
 which in view of \eqref{bb} gives
$$\frac{d}{dr}(e^{ar^\alpha}r^{-n-2})E(\uu,r) \geq e^{ar^\alpha} r^{-n-2}\left(\int_{\p B_r}(|\p_\nu \uu|^2 - |\nabla \uu|^2) -2 \int_{\p B_r} |\uu|\right)$$
$$-4 e^{ar^\alpha} r^{-n-4} \int_{\p B_r} |\uu|^2.$$
This, combined with \eqref{deriv} implies
$$\frac{d}{dr} W(r) \geq e^{ar^\alpha} r^{-n-2}\int_{\p B_r}|\p_\nu \uu|^2 -4 e^{ar^\alpha} r^{-n-4} \int_{\p B_r} |\uu|^2$$
$$-2\psi(r)\int_{\p B_r}\uu \cdot \p_\nu \uu -(4e^{ar^\alpha}r^{-n-4} +\psi'(r) + \frac{n-1}{r}\psi(r) ) \int_{\p B_r} |\uu|^2.$$
It is easy to see that, for $r< r_0=r_0(n,\alpha)$
$$-\frac{e^{ar^\alpha}}{r^{n+2}}(4e^{ar^\alpha}r^{-n-4} +\psi'(r) + \frac{n-1}{r}\psi(r)) \geq \psi(r)^2,$$ which combined with the inequality above and the definition of $\psi$ implies the desired claim.
\end{proof}

%%%%%%%%%%%%%%%%%%%%%%%%%%%%%%%%%%%%%%%%%%%%

\section{Growth estimates}
In this section we show that almost minimizers have quadratic growth away from the free boundary. We begin with a weak quadratic growth estimate.

\begin{lemma}
\label{lem:weak-growth-est}
Let $\uu\in W^{1,2}(B_1)$ be an almost minimizer in $B_1$. Then for any $0<\e<1$, there exist $C>0$ and $r_0>0$, depending on $n$, $\al$, $\e$, $E(\uu,1)$, such that $$
\sup_{B_r(x)}\left(\frac{|\uu|}{r^{2-\e}}+\frac{|\D\uu|}{r^{1-\e}}\right)\le C ,
$$
for any $x\in\Gamma(\uu)\cap B_{1/2}$ and $0<r<r_0.$\end{lemma}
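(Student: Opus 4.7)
The plan is to argue by contradiction and blow-up, using Theorem~\ref{thm:grad-u-holder} for compactness and a Liouville-type theorem for the conclusion; the Weiss monotonicity of Section~3 is not needed here. Theorem~\ref{thm:grad-u-holder} already gives $|\uu(y)|\le C|y-x|^{1+\al/2}$ and $|\D\uu(y)|\le C|y-x|^{\al/2}$ at any $x\in\Gamma(\uu)\cap B_{1/2}$, which settles the claim when $\e\ge 1-\al/2$, so I focus on small $\e$.

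Set $\phi(x,r):=r^{\e-2}\sup_{B_r(x)}|\uu|$ and assume the estimate on $|\uu|$ fails; then
$$\Phi(R):=\sup\{\phi(x,r):x\in\Gamma(\uu)\cap\bar B_{1/2},\ r\in[R,r_0)\}\longrightarrow\infty\quad\text{as }R\to 0.$$
Choose $(x_k,r_k)$ with $r_k\in[2^{-k},r_0)$ realizing $N_k:=\phi(x_k,r_k)\ge\Phi(2^{-k})/2$; since $\phi(x,r)\le C/R^{2-\e}$ for $r\ge R>0$, the divergence $N_k\to\infty$ forces $r_k\to 0$, while the dyadic choice gives $\phi(x_k,r)\le 2N_k$ for all $r\in[r_k,r_0)$. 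Set $M_k:=\sup_{B_{r_k}(x_k)}|\uu|=N_kr_k^{2-\e}$ and $\la_k:=M_k/r_k^2=N_kr_k^{-\e}\to\infty$, and consider
$$\uu_{x_k,r_k}(y)=\frac{\uu(x_k+r_ky)}{r_k^2},\qquad \tilde\uu_k(y):=\frac{\uu_{x_k,r_k}(y)}{\la_k}=\frac{\uu(x_k+r_ky)}{M_k}.$$
The function $\uu_{x_k,r_k}$ is an almost minimizer of \eqref{functional} in $B_{1/(2r_k)}$ with gauge $(r_k\cdot)^\al\le r^\al$, while $\tilde\uu_k$ almost minimizes the modified functional $\int(|\D\vv|^2+(2/\la_k)|\vv|)$ with the same vanishing gauge. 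The free boundary condition yields $\tilde\uu_k(0)=0$ and $\D\tilde\uu_k(0)=0$; the dyadic maximality translates to $|\tilde\uu_k(y)|\le 2|y|^{2-\e}$ for $1\le|y|\le r_0/r_k$; and $\sup_{\bar B_1}|\tilde\uu_k|=1$, attained at some $y_k\in\bar B_1$.

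For compactness I would apply Theorem~\ref{thm:grad-u-holder} directly to $\uu_{x_k,r_k}$: the growth bound $|\uu(z)|\le 2N_k|z-x_k|^{2-\e}$ for $|z-x_k|\in[r_k,r_0)$ together with a standard Caccioppoli-type inequality for almost minimizers give $E(\uu_{x_k,r_k},1)\le C\la_k^2$, hence $\|\uu_{x_k,r_k}\|_{C^{1,\al/2}(B_{1/2})}\le C\la_k$; the same estimate on larger balls (legitimate as $r_k\to 0$) produces uniform $C^{1,\al/2}_{\loc}(\R^n)$ bounds on $\tilde\uu_k=\uu_{x_k,r_k}/\la_k$. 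A diagonal extraction produces $\tilde\uu_k\to\tilde\uu_\infty$ in $C^{1,\beta}_{\loc}(\R^n)$ for some $\beta<\al/2$. Passing to the limit in the almost-min inequality (both the gauge and the coefficient $2/\la_k$ vanishing), $\tilde\uu_\infty$ is harmonic on $\R^n$ with $\tilde\uu_\infty(0)=0$, $\D\tilde\uu_\infty(0)=0$, $|\tilde\uu_\infty(y)|\le C(1+|y|^{2-\e})$, and $|\tilde\uu_\infty(y_\infty)|=1$ for some $y_\infty\in\bar B_1$ (extracted from the $y_k$).

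Interior gradient estimates for harmonic functions combined with the subquadratic growth give $|\D^2\tilde\uu_\infty(y)|\le C|y|^{-\e}\to 0$, so $\tilde\uu_\infty$ is affine by Liouville; the conditions at the origin force $\tilde\uu_\infty\equiv 0$, contradicting $|\tilde\uu_\infty(y_\infty)|=1$. Once the sup bound on $|\uu|$ is established, the gradient bound is obtained by applying Theorem~\ref{thm:grad-u-holder} to $\uu_{x,r}(y)=\uu(x+ry)/r^2$: since $\|\uu_{x,r}\|_{L^\infty(B_1)}\le Cr^{-\e}$ and Caccioppoli gives $E(\uu_{x,r},1)\le Cr^{-2\e}$, the $C^1$-estimate yields $\|\D\uu_{x,r}\|_{L^\infty(B_{1/2})}\le Cr^{-\e}$, which rescales to $\|\D\uu\|_{L^\infty(B_{r/2}(x))}\le Cr^{1-\e}$. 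The main obstacle I expect is the dyadic maximal selection ensuring the growth bound on $\tilde\uu_k$ transfers to the blow-up limit, and tracking that Theorem~\ref{thm:grad-u-holder} applies uniformly under the double rescaling so that the $C^{1,\al/2}$ bounds on $\tilde\uu_k$ are genuinely uniform.
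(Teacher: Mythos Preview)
Your proposal is correct and follows the same contradiction--blow-up--Liouville scheme as the paper. The main technical difference is in the choice of blow-up quantity: the paper works with the combined expression $r^{\e-2}|\uu|+r^{\e-1}|\D\uu|$ and rescales by its maximum, so the rescalings $\wuuj(x)=\uu_j(x_j+r_jx)/(jr_j^{2-\e})$ automatically satisfy \emph{both} $|\wuuj|\le R^{2-\e}$ and $|\D\wuuj|\le R^{1-\e}$ on $B_R$. This makes the energy bound for $\wuuj$ immediate, and the $C^{1,\al/2}$ theory (re-run for the modified functional $\int|\D\vv|^2+\tfrac{2r_j^\e}{j}|\vv|$) applies with no further input. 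By isolating $|\uu|$ first, you must manufacture the energy bound from $L^\infty$ control alone, which is exactly your Caccioppoli step; this is justifiable by a standard hole-filling argument for almost minimizers (using $\vv=(1-\eta^2)\uu$ as competitor and the smallness of the gauge), but it is not in the paper and is the one place where your sketch is genuinely thinner than the original. Conversely, your idea of applying Theorem~\ref{thm:grad-u-holder} directly to the $2$-homogeneous rescaling $\uu_{x_k,r_k}$ (a bona fide almost minimizer of the \emph{original} functional with gauge $(r_k\rho)^\al\le\rho^\al$) and only then dividing by $\la_k$ is a clean shortcut compared with the paper's re-derivation of the regularity estimates for the modified functional. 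Your separate final step of deducing the gradient bound from the already-established $|\uu|$ bound via Caccioppoli plus Theorem~\ref{thm:grad-u-holder} is an extra (easy) argument that the paper avoids by getting both bounds simultaneously.
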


\begin{proof}
Let $M\geq E(\uu,1)$, and note that $$
\|\uu\|_{C^{1,\al/2}(B_{3/4})}\le C(n,\al)(E(\uu,1)^{1/2}+1)\le C(n,\al,M).
$$
Assume by contradiction that the lemma is not true for almost minimizers with energy bounded by $M$. Then we can find a sequence $\{\uu_j\}_{j=1}^\infty$ of almost minimizers with $E(\uu_j,1)\le M$, a sequence $\{x_j\}_{j=1}^\infty\subset \Gamma(\uu_j)\cap B_{1/2}$ of free boundary points and a sequence of positive radii $\{r_j\}_{j=1}^\infty\subset (0,1)$,  $r_j \searrow 0,$
such that for large $j$ \begin{align*}
    \sup_{B_{r_j}(x_j)}\left(\frac{|\uu_j|}{r_j^{2-\e}}+\frac{|\D\uu_j|}{r_j^{1-\e}}\right)=j,\quad \sup_{B_r(x_j)}\left(\frac{|\uu_j|}{r^{2-\e}}+\frac{|\D\uu_j|}{r^{1-\e}}\right)\le j\quad\text{for any } r_j\le r\le 1/4.
\end{align*}
Define the function $$
\wuuj(x):=\frac{\uu_j(r_jx+x_j)}{jr_j^{2-\e}},\quad x\in B_{\frac1{4r_j}}.
$$
Then
 $$
\sup_{B_1}\left(|\wuuj|+|\D\wuuj|\right)=1\quad\text{and } \sup_{B_R}\left(\frac{|\wuuj|}{R^{2-\e}}+\frac{|\D\wuuj|}{R^{1-\e}}\right)\le 1\quad\text{for any }1\le R\le \frac1{4r_j}.
$$
Now we claim that there exists a harmonic function $\wuuz\in C^1_{\loc}(\R^n;\R^m)$ such that over a subsequence $$
\wuuj\to\wuuz\quad\text{in }C^1_{\loc}(\R^n;\R^m).
$$
Indeed, for a fixed $R>1$ and a ball $B_\rho(z)\subset B_R$, we have $$
\int_{B_\rho(z)}\left(|\D\wuuj|^2+\frac{2r_j^\e}j|\wuuj|\right)=\frac1{j^2r_j^{n+2-2\e}}\int_{B_{r_j\rho}(r_jz+x_j)}\left(|\D\uu_j|^2+2|\uu_j|\right).
$$
This implies that each $\wuuj$ is an almost minimizer of a functional $$
J_{B_\rho(z)}(\wuuj)=\int_{B_\rho(z)}\left(|\D\wuuj|^2+\frac{2r_j^\e}{j}|\wuuj|\right) ,
$$
with a gauge function $\omega(\rho)=(r_j\rho)^\al\le\rho^\al$. Let $\h$ be the harmonic replacement of $\wuuj$ in $B_\rho(z)$. Then, since $\frac{r_j^\e}j\le 1$, we can obtain the equivalent of \eqref{eq:grad-(u-h)-est}: \begin{align*}
    \int_{B_\rho(z)}|\D(\wuuj-\h)|
    &=\int_{B_\rho(z)}\left(|\D\wuuj|^2-|\D \h|^2\right)\\
    &\le \int_{B_\rho(z)}\rho^\al|\D \h|^2+\frac{2r_j^\e}{j}\left((1+\rho^\al)|\h|-|\wuuj|\right)\\
    &=\int_{B_\rho(z)}\rho^\al|\D \h|^2+\frac{2r_j^\e}j(1+\rho^\al)(|\h|-|\wuuj|)+\frac{2r_j^\e}j\rho^\al|\wuuj|\\
    &\le \int_{B_\rho(z)}\rho^\al|\D\wuuj|^2+2(1+\rho^\al)|\h-\wuuj|+2\rho^\al|\wuuj|.
\end{align*}
With this estimate at hand, we can proceed as in the proofs of Proposition~\ref{prop:Mor-est}, Theorem~\ref{thm:alm-Lip-reg} and Theorem~\ref{thm:grad-u-holder} to obtain $$
\|\wuuj\|_{C^{1,\al/2}(\overline{B_{R/2}};\R^m)}\le C(n,\al,R)\left(E(\wuuj,R)^{1/2}+1\right)\le C(n,\al,M,R).
$$
Thus, over a subsequence, $$
\wuuj\to\wuuz\quad\text{in }C^1(B_{R/2};\R^m).
$$
Letting $R\to\infty$ and using Cantor's diagonal argument, we have that $$
\wuuj\to\wuuz\quad\text{in }C^1_{\loc}(\R^n;\R^m).
$$
Now, to verify that $\wuuz$ is harmonic, we fix $R>1$ and observe that for large $j$ and for the harmonic replacement $\h_j$ of $\wuuj$ in $B_R$, \begin{align}\label{eq:alm-min-prop}
\int_{B_R}\left(|\D\wuuj|^2+\frac{2r_j^\e}j|\wuuj|\right)\le \left(1+(r_jR)^\al\right)\int_{B_R}\left(|\D\h_j|^2+\frac{2r_j^\e}j|\h_j|\right).
\end{align}
From $$
\|\h_j\|_{C^{1,\al/2}(\overline{B_R};\R^m)}\le C(n,R)\|\wuuj\|_{C^{1,\al/2}(\overline{B_R};\R^m)}\le C(n,\al,M,R),
$$
we have that up to a subsequence $$
\h_j\to \h_0 \quad\text{in }C^1(\overline{B_R};\R^m) ,
$$
for some harmonic function $\h_0\in C^1(\overline{B_R};\R^m)$. Thus, by taking $j\to\infty$ in \eqref{eq:alm-min-prop}, we obtain $$
\int_{B_R}|\D\wuuz|^2\le \int_{B_R}|\D \h_0|^2.
$$
Since $\h_0$ is a harmonic function in $B_R$ with $\wuuz$-boundary value, $\wuuz$ should be equal to $\h_0$ in $B_R$ and hence is harmonic there. Since $R>1$ is arbitrary, $\wuuz$ is harmonic in $\R^n$. This finishes the proof of the claim.\\

\medskip\noindent Now, we observe that \begin{align*}
    \sup_{B_1}(|\wuuz|+|\D\wuuz|)=1 \quad\text{and }\sup_{B_R}\left(\frac{|\wuuz|}{R^{2-\e}}+\frac{|\D\wuuz|}{R^{1-\e}}\right)\le 1\quad\text{for any }R\ge 1.
\end{align*}
Moreover, notice that from $x_j\in\Gamma(\uu_j)$, we have $|\wuuj(0)|=|\D\wuuj(0)|=0$, and thus $|\wuuz(0)|=|\D\wuuz(0)|=0$. To arrive at a contradiction, we note that $|\D\wuuz(x)|\le |x|^{1-\e}$ for $|x|\ge 1$ and $|\D\wuuz(0)|=0$ and apply Liouville's theorem to obtain $\D\wuuz\equiv 0$ in $\R^n$. This means that $\wuuz$ is a constant vector, which contradicts that $\sup_{B_1}|\wuuz|=\sup_{B_1}(|\wuuz|+|\D\wuuz|)=1$ and $|\wuuz(0)|=0$.
\end{proof}

Now we are going to establish the quadratic growth of almost minimizers at free boundary points (Lemma~\ref{lem:quad-growth-L1} and Lemma~\ref{lem:quad-growth-L2}) with the help of the weak quadratic growth estimate (Lemma~\ref{lem:weak-growth-est}) and the monotonicity of the Weiss-type energy (Theorem~\ref{thm:weiss-mon}).

\begin{lemma}\label{lem:quad-growth-L1}
Let $\uu$ be an almost minimizer in $B_1$ with $x_0\in \Gamma(\uu)\cap B_{1/2}$. Then, for $C>0$ and $r_0>0$, depending only on $n$, $\al$, $E(\uu,1)$, we have $$
\int_{B_r(x_0)}|\uu|\le Cr^{n+2},\quad 0<r<r_0.
$$
\end{lemma}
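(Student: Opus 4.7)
My plan is to combine the Weiss-type monotonicity (Theorem~\ref{thm:weiss-mon}) with the weak quadratic growth estimate (Lemma~\ref{lem:weak-growth-est}), in the spirit of the argument for almost minimizers of the thin obstacle problem in \cite{JeoPet21}.

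Fix a reference radius $t_0 \in (0, t_0(n, \al))$. The monotonicity of $W(\uu, x_0, \cdot)$, combined with the bound $W(\uu, x_0, t_0) \le C(n, \al, E(\uu, 1))$ obtained by estimating $\int_{B_{t_0}(x_0)}(|\D\uu|^2 + 2|\uu|) \le E(\uu, 1)$ and discarding the nonnegative boundary contribution, gives $W(\uu, x_0, r) \le C$ for all $0 < r < t_0$. Unpacking the definition of the Weiss functional yields
\[
\int_{B_r(x_0)}|\uu| \le C r^{n+2} + \frac{1}{r}\int_{\p B_r(x_0)}|\uu|^2.
\]
Applying Lemma~\ref{lem:weak-growth-est} with some $\e > 0$ gives $|\uu(x)| \le C_\e|x - x_0|^{2-\e}$, hence $\frac{1}{r}\int_{\p B_r(x_0)}|\uu|^2 \le C r^{n+2-2\e}$, producing the initial estimate $\int_{B_r(x_0)}|\uu| \le C r^{n+2-2\e}$ with a small loss of $r^{-2\e}$.

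The main obstacle is closing this $r^{-2\e}$ gap. I would attempt this by a compactness-and-contradiction argument. Assuming the conclusion fails, one obtains sequences of almost minimizers $\uu_j$ with $E(\uu_j, 1) \le M$, free boundary points $x_j \in \Gamma(\uu_j) \cap B_{1/2}$, and radii $r_j \searrow 0$ satisfying $A_j := r_j^{-(n+2)}\int_{B_{r_j}(x_j)}|\uu_j| \to \infty$. A Dini-type selection produces radii $\rho_j \in [r_j, r_0]$ maximizing the $L^1$-growth ratio, with $B_j := \rho_j^{-(n+2)}\int_{B_{\rho_j}(x_j)}|\uu_j| \to \infty$ and $\rho^{-(n+2)}\int_{B_\rho(x_j)}|\uu_j| \le B_j$ for $\rho \in [\rho_j, r_0]$. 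Rescaling $\hat{\uu}_j(y) := \uu_j(x_j + \rho_j y)/(B_j \rho_j^2)$ then gives $\int_{B_1}|\hat{\uu}_j| = 1$ and $\int_{B_\rho}|\hat{\uu}_j| \le \rho^{n+2}$ for $\rho \in [1, r_0/\rho_j]$. Each $\hat{\uu}_j$ is an almost minimizer of the Dirichlet-dominant functional $\int(|\D\vv|^2 + (2/B_j)|\vv|)$ with gauge $(\rho_j\rho)^\al \to 0$; a suitably adapted version of Theorem~\ref{thm:grad-u-holder} then yields $C^{1,\al/2}_{\loc}$-compactness, giving a subsequential limit $\hat{\uu}_0$ which is harmonic on $\R^n$, with $\hat{\uu}_0(0) = 0$, $\D\hat{\uu}_0(0) = 0$, and $\int_{B_\rho}|\hat{\uu}_0| \le \rho^{n+2}$ for all $\rho \ge 1$.

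A Liouville-type theorem constrains $\hat{\uu}_0$ to a harmonic polynomial of degree at most $2$. The subtlest point is to rule out a nontrivial degree-$2$ limit, since such polynomials are compatible with the growth bound, the vanishing at the origin, and the normalization $\int_{B_1}|\hat{\uu}_0| = 1$. Closing this case likely requires passing the Weiss monotonicity through the blow-up and exploiting the mismatch between the harmonic limit and the structure of global $2$-homogeneous solutions of \eqref{eq:sol}, which lie in $\Hf$ and are not harmonic, thereby producing the desired contradiction.
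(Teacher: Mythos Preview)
Your proposal correctly identifies the main obstruction --- ruling out a nontrivial $2$-homogeneous harmonic polynomial as the blowup limit --- but the suggested fix does not work, and this is precisely where the paper's argument differs essentially from yours.

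Passing the Weiss monotonicity to your rescaled limit gives only $\int_{B_1}|\D\hat{\uu}_0|^2 - 2\int_{\p B_1}|\hat{\uu}_0|^2 \le 0$ (the potential term drops out since $B_j\to\infty$), and this inequality is saturated by every $2$-homogeneous harmonic polynomial, so it yields no contradiction. Nor does the structure of $\Hf$ help: your limit $\hat{\uu}_0$ is harmonic because the forcing term has been rescaled away, so there is no mechanism linking it to solutions of \eqref{eq:sol}. The obstruction is intrinsic: $2$-homogeneous harmonic polynomials are \emph{invisible} to both the Weiss functional and the growth bound $\int_{B_\rho}|\hat{\uu}_0|\le\rho^{n+2}$, and they are compatible with $\hat{\uu}_0(0)=0$, $\D\hat{\uu}_0(0)=0$.

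The paper's proof exploits exactly this invisibility. Using the identity
\[
\frac{2}{r^{n+3}}\int_{\p B_r(x_0)}|\uu|^2 - \frac{1}{r^{n+2}}\int_{B_r(x_0)}|\D\uu|^2 = \frac{2}{r^{n+3}}\int_{\p B_r(x_0)}|\uu-S_{x_0}\pp|^2 - \frac{1}{r^{n+2}}\int_{B_r(x_0)}|\D(\uu-S_{x_0}\pp)|^2
\]
valid for any $\pp\in\cH$ (the space of vector-valued $2$-homogeneous harmonic polynomials), one reduces the estimate to bounding $\int_{\p B_r(x_0)}|\uu-S_{x_0}\pp_{x_0,r}|^2$, where $\pp_{x_0,r}$ is the $L^2(\p B_r)$-projection of $\uu$ onto $\cH$. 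The contradiction argument is then run on the \emph{projected} quantity: the blowup limit $\w_0^1$ is harmonic, satisfies $\int_{B_1}|\D\w_0^1|^2\le 2\int_{\p B_1}|\w_0^1|^2$, and is by construction $L^2(\p B_1)$-orthogonal to $\cH$. A lemma of Weiss then forces $\w_0^1\in\cH$, and the orthogonality gives the contradiction. The projection step is the missing idea in your approach; without it, there is nothing to contradict.

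A secondary issue: your compactness step is not secured. The normalization $\int_{B_1}|\hat{\uu}_j|=1$ does not bound $\int_{B_R}|\D\hat{\uu}_j|^2$, and the weak growth only gives $B_j\rho_j^\e\le C_\e$, which bounds $B_j^2\rho_j^{2\e}$ from above but not below; hence $\int_{B_R}|\D\hat{\uu}_j|^2 \sim (B_j^2\rho_j^{2\e})^{-1}$ may blow up. The paper avoids this by rescaling instead by the (boundary) $L^2$-norm of the projection error and splitting off the harmonic replacement.
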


\begin{proof}
From the monotonicity formula, $$
W(\uu,x_0,r)\le W(\uu,x_0,1/2)\le C(n,\al)E(u,1).
$$
Thus \begin{align}\label{eq:L1-bound}\begin{split}
    &\frac{2e^{ar^\al}}{r^{n+2}}\int_{B_r(x_0)}|\uu|\\
    &=W(\uu,x_0,r)+\frac{2e^{ar^\al}(1-br^\al)}{r^{n+3}}\int_{\partial B_r(x_0)}|\uu|^2-\frac{e^{ar^\al}}{r^{n+2}}\int_{B_r(x_0)}|\D\uu|^2\\
    &\le C(n,\al)E(\uu,1)+e^{ar^\al}\left(\frac2{r^{n+3}}\int_{\p B_r(x_0)}|\uu|^2-\frac1{r^{n+2}}\int_{B_r(x_0)}|\D\uu|^2\right)\\
    &=C(n,\al)E(\uu,1)\\
    &\qquad+e^{ar^\al}\left(\frac2{r^{n+3}}\int_{\partial B_r(x_0)}|\uu-S_{x_0}\pp|^2-\frac1{r^{n+2}}\int_{B_r(x_0)}|\D(\uu-S_{x_0}\pp)|^2\right)\\
    &\le C(n,\al)E(\uu,1)+\frac{2e^{ar^\al}}{r^{n+3}}\int_{\p B_r(x_0)}|\uu-S_{x_0}\pp|^2 
\end{split}\end{align}
for each $\pp=(p_1,\ldots,p_m)\in \cH$, where $\cH$ is the set of all $\pp=(p_1,\ldots,p_m)$ such that each component $p_j$ is $2$-homogeneous harmonic polynomial, and $S_{x_0}f(x):=f(x-x_0)$.\\
For each $x_0\in \Gamma(\uu)$, let $\pp_{x_0,r}=\pp_{x_0,r,\uu}$ be the minimizer of $\int_{\p B_r(x_0)}|\uu-S_{x_0}\pp|^2$ in $\cH$. Then it is easy to see that $$
\int_{\partial B_r(x_0)}(\uu-S_{x_0}\pp_{x_0,r})S_{x_0}\q=0\quad\text{for every }\q\in\cH.
$$
Now, the lemma follows once we prove that there are $C=C(n,\al,E(\uu,1))$ and $r_0=r_0(n,\al, E(\uu,1))>0$ such that \begin{align}\label{eq:u-p-diff-est}
\int_{\p B_r(x_0)}|\uu-S_{x_0}\pp_{x_0,r}|^2\le Cr^{n+3},\quad0<r<r_0.
\end{align}
Towards this, we assume to the contrary that there are a sequence of almost minimizers $\uu_k$, a sequence of points $x_k\in \Gamma(\uu_k)\cap B_{1/2}$, and $r_k\to 0$ such that $E(\uu_k,1)$ is uniformly bounded and $$
M_k:=\frac1{r_k^{n+3}}\int_{\partial B_{r_k}(x_k)}|\uu_k-S_{x_k}\pp_{x_k,r_k}|^2\to\infty.
$$
Denote $$
\vv_k(x):=(\uu_k)_{x_k,r_k}(x)=\frac{\uu_k(x_k+r_kx)}{r_k^2}\quad\text{and }\w_k(x):=\frac{\vv_k(x)-\pp_{x_k,r_k}(x)}{M_k^{1/2}},\quad x\in B_1.
$$
For each $k$, let $\h_k$ be the harmonic function in $B_1$ such that $\h_k=\vv_k$ on $\partial B_1$, and write $$
\w_k=\frac{\h_k-\pp_{x_k,r_k}}{M_k^{1/2}}+\frac{\vv_k-\h_k}{M_k^{1/2}}:=\w_k^1+\w_k^2.
$$
Then  \begin{align*}
    \int_{\p B_1}|\w_k^1|^2=\frac1{M_k}\int_{\p B_1}|\vv_k-\pp_{x_k,r_k}|^2=\frac1{M_kr_k^{n+3}}\int_{\p B_{r_k}(x_k)}|\uu_k-S_{x_k}\pp_{x_k,r_k}|^2=1
\end{align*}
and
 \begin{align*}
   \int_{B_1}|\D\w_k^1|^2-2\int_{\p B_1}|\w_k^1|^2
    &=\frac1{M_k}\left(\int_{B_1}|\D(\h_k-\pp_{x_k,r_k})|^2-2\int_{\p B_1}|\h_k-\pp_{x_k,r_k}|^2\right)\\
    &=\frac1{M_k}\left(\int_{B_1}|\D\h_k|^2-2\int_{\p B_1}|\h_k|^2\right)\\
    &\le\frac1{M_k}\left(\int_{B_1}|\D\vv_k|^2-2\int_{\p B_1}|\vv_k|^2\right)\\
    &=\frac1{M_k}\left(\frac1{r_k^{n+2}}\int_{B_{r_k}(x_k)}|\D\uu_k|^2-\frac2{r_k^{n+3}}\int_{\p B_{r_k}(x_k)}|\uu_k|^2\right)\\
    &\le \frac1{M_k}W(\uu_k,x_k,r_k)\le \frac1{M_k}W(\uu_k,x_k,1/2) \\
    &\le \frac{C(n,\al)}{M_k}E(\uu_k,1)\to 0\quad\text{as }k\to\infty.
\end{align*}
Thus $\{\w_k^1\}_{k\in\N}$ is bounded in $W^{1,2}(B_1;\R^m)$, hence there is $\w_0^1\in W^{1,2}(B_1;\R^m)$ such that up to a subsequence \begin{align*}
    &\w_k^1\to\w_0^1\quad\text{weakly in }W^{1,2}(B_1;\R^m),\\
    &\w_k^1\to\w_0^1\quad\text{strongly in }L^2(\p B_1;\R^m).
\end{align*}
It follows that \begin{align}\label{eq:w_0^1}
&\int_{B_1}|\D\w_0^1|^2\le 2\int_{\p B_1}|\w_0^1|^2=2,\\
\label{eq:w_0^1-q-est}&\int_{\p B_1}\w_0^1\cdot\q=0\quad\text{for every }\q\in\cH.
\end{align}
Moreover, since each $\w_k^1$ is harmonic in $B_1$, $\w_0^1$ is also harmonic in $B_1$. It also follows from $C^{1,\al}$-estimate of harmonic functions $\w_k^1$ that $$
\w_k^1\to\w_0^1\quad\text{in }C^1_{\loc}(B_1;\R^m).
$$
Next, to deal with $\w_k^2=\frac{\vv_k-\h_k}{M_k^{1/2}}$, we observe that $\vv_k(x)=\frac{\uu_k(x_0+r_kx)}{r_k^2}$ is an almost minimizer with a gauge function $\omega(\rho)=(\rho r_k)^\al$. Thus, by \eqref{eq:(u-h)-diff-est} and Lemma~\ref{lem:weak-growth-est} with $\e=\al/4$, \begin{align*}
    \int_{B_1}\left(|\D\w_k^2|^2+|\w_k^2|\right)&\le\frac1{M_k^{1/2}}\int_{B_1}\left(|\D(\vv_k-\h_k)|^2+|\vv_k-\h_k|\right)\\
    &\le \frac{C(n)}{M_k^{1/2}}\left(r_k^\al\int_{B_1}\left(|\D\vv_k|^2+|\vv_k|\right)+1\right)\\
    &\le \frac{C(n)}{M_k^{1/2}}\left(C(n,\al,E(\uu_k,1))r_k^{\al/2}+1\right)\to 0\quad\text{as }k\to\infty.
\end{align*}
Thus, up to another subsequence, $$
\w_k^2\to0\quad\text{strongly in }W^{1,2}(B_1;\R^m).
$$
From $x_k\in\Gamma(\uu_k)$, we have $|\w_k(0)|=|\D\w_k(0)|=0$, thus \begin{align*}
    |\w_0^1(0)|=\lim_{k\to\infty}|\w_k^1(0)|=\lim_{k\to\infty}|\w_k(0)|=0\end{align*}
    and
    \begin{align*}|\D\w_0^1(0)|=\lim_{k\to\infty}|\D\w_k^1(0)|=\lim_{k\to\infty}|\D\w_k(0)|=0.
\end{align*}
From here, we can repeat the argument in Theorem~2 in \cite{AndShaUraWei15} to get the contradiction. Indeed, by Lemma 4.1 in \cite{Wei01}, each component $z_j$ of $\w_0^1$ satisfies $$
2\int_{\p B_1}z_j^2\le \int_{B_1}|\D z_j|^2.
$$
This, together with \eqref{eq:w_0^1}, gives $$
2\int_{\p B_1}z_j^2= \int_{B_1}|\D z_j|^2.
$$
By Lemma 4.1 in \cite{Wei01} again, each $z_j$ is a $2$-homogeneous harmonic polynomial. Therefore, we have $\w_0^1\in \cH$, and this contradicts \eqref{eq:w_0^1-q-est}.
\end{proof}

\begin{lemma}
\label{lem:quad-growth-L2}
Let $\uu$ be an almost minimizer in $B_1$ with $x_0\in \Gamma(\uu)\cap B_{1/2}$. Then, for $C$ and $r_0$ as in Lemma~\ref{lem:quad-growth-L1}, we have for $0<r<r_0$ 
\begin{align*}
    \int_{B_r(x_0)}|\D\uu|^2 \le Cr^{n+2} \qquad   \hbox{and} \qquad    \int_{\p B_r(x_0)}|\uu|^2\le Cr^{n+3}.
\end{align*}
\end{lemma}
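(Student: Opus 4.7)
The plan is to first reduce the gradient bound to the boundary $L^2$-bound, and then prove the boundary bound by a blow-up/contradiction argument in the spirit of Lemma~\ref{lem:quad-growth-L1}. From Theorem~\ref{thm:weiss-mon}, $W(\uu,x_0,r)\le W(\uu,x_0,1/2)\le C(n,\al)E(\uu,1)$; expanding the definition of $W$ gives
$$
\int_{B_r(x_0)}|\D\uu|^2\le Cr^{n+2}+\frac{2(1-br^\al)}{r}\int_{\p B_r(x_0)}|\uu|^2-2\int_{B_r(x_0)}|\uu|\le Cr^{n+2}+\frac{2}{r}\int_{\p B_r(x_0)}|\uu|^2,
$$
so once $\int_{\p B_r(x_0)}|\uu|^2\le Cr^{n+3}$ is proved, the gradient bound follows.

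To prove the boundary estimate, suppose it fails. Then there exist almost minimizers $\uu_k$ with $E(\uu_k,1)$ uniformly bounded, free boundary points $x_k\in\Gamma(\uu_k)\cap B_{1/2}$, and radii $r_k\searrow 0$ with $M_k:=r_k^{-(n+3)}\int_{\p B_{r_k}(x_k)}|\uu_k|^2\to\infty$. Rescale $\vv_k(x):=\uu_k(x_k+r_kx)/r_k^2$ on $B_1$ and normalize $\w_k:=\vv_k/M_k^{1/2}$, so that $\int_{\p B_1}|\w_k|^2=1$. By Lemma~\ref{lem:quad-growth-L1}, $\int_{B_1}|\vv_k|\le C$, while the Weiss bound at scale $1$ for $\vv_k$ gives $\int_{B_1}|\D\vv_k|^2\le C+2M_k$; hence $\int_{B_1}|\D\w_k|^2\le 2+o(1)$ and $\{\w_k\}$ is bounded in $W^{1,2}(B_1)$. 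Along a subsequence $\w_k\rightharpoonup\w_0$ weakly in $W^{1,2}(B_1)$ and strongly in $L^2(\p B_1)$, giving $\int_{\p B_1}|\w_0|^2=1$. Passing to the limit in the almost-minimality inequality for $\vv_k$ tested against perturbations $M_k^{1/2}\psi$ with $\psi\in W^{1,2}_0$ shows that $\w_0$ is harmonic in $B_1$, and the interior $C^{1,\al/2}$-estimate of Theorem~\ref{thm:grad-u-holder} applied to $\vv_k$ yields $\|\w_k\|_{C^{1,\al/2}(\overline{B_{1/2}})}\le C$, so $\w_k\to\w_0$ in $C^1_{\loc}(B_1)$ and hence $\w_0(0)=0=\D\w_0(0)$ (inherited from $x_k\in\Gamma(\uu_k)$).

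To close, Lemma~4.1 of \cite{Wei01} applied componentwise to the harmonic $\w_0$ vanishing to order two at the origin gives $\int_{B_1}|\D\w_0|^2\ge 2\int_{\p B_1}|\w_0|^2=2$. Combined with $\int_{B_1}|\D\w_k|^2\le 2+o(1)$ and weak lower semicontinuity, this forces $\int_{B_1}|\D\w_0|^2=2$ and $\int_{B_1}|\D\w_k|^2\to 2$, so $\w_k\to\w_0$ strongly in $W^{1,2}(B_1)$; equality in the frequency bound identifies $\w_0$ as a nontrivial $2$-homogeneous harmonic polynomial, hence $\int_{B_1}|\w_0|>0$. By strong $L^1(B_1)$-convergence $\int_{B_1}|\w_k|\to\int_{B_1}|\w_0|>0$, whereas $\int_{B_1}|\w_k|=M_k^{-1/2}\int_{B_1}|\vv_k|\le CM_k^{-1/2}\to 0$ by Lemma~\ref{lem:quad-growth-L1}, a contradiction. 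The delicate point is to pair the upper Dirichlet bound coming from Weiss monotonicity with the lower bound from the frequency formula so as to upgrade weak to strong $W^{1,2}$-convergence; once that is achieved, the sharp $L^1$-quadratic growth of Lemma~\ref{lem:quad-growth-L1} supplies the contradiction.
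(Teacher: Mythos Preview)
Your argument is correct and takes a genuinely different route from the paper. The paper proceeds directly, without contradiction: for the gradient bound it lets $\h$ be the harmonic replacement of $\uu$ in $B_r(x_0)$, rescales to $B_1$, and combines \eqref{eq:(u-h)-diff-est} with Lemma~\ref{lem:weak-growth-est} to control $\int_{B_1}(|\D(\uu_r-\h_r)|^2+|\uu_r-\h_r|)$, then uses Lemma~\ref{lem:quad-growth-L1} to bound $\int_{B_1}|\h_r|$ and interior estimates for the harmonic $\h_r$ to bound $\int_{B_{1/2}}|\D\h_r|^2$. For the boundary estimate it invokes the identity (valid because $\pp_{0,r}$ is $2$-homogeneous harmonic)
\[
\frac{2}{r^{n+3}}\int_{\p B_r}|\uu|^2-\frac{1}{r^{n+2}}\int_{B_r}|\D\uu|^2=\frac{2}{r^{n+3}}\int_{\p B_r}|\uu-\pp_{0,r}|^2-\frac{1}{r^{n+2}}\int_{B_r}|\D(\uu-\pp_{0,r})|^2
\]
together with the estimate \eqref{eq:u-p-diff-est} already established inside the proof of Lemma~\ref{lem:quad-growth-L1}. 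So the paper derives the gradient bound first and recycles the polynomial projection $\pp_{0,r}$; you instead reduce the gradient bound to the boundary bound via Weiss and then rerun a blow-up argument parallel to that of Lemma~\ref{lem:quad-growth-L1}, but with the simpler normalization $\w_k=\vv_k/M_k^{1/2}$ (no subtraction of $\pp$). Your route is self-contained in that it avoids \eqref{eq:u-p-diff-est}, at the cost of repeating the compactness/frequency machinery; the paper's route is shorter precisely because it reuses that intermediate estimate. Two small points worth making explicit in your write-up: the bound $E(\vv_k,1)\le C+2M_k$ (from Lemma~\ref{lem:quad-growth-L1} plus the Weiss inequality) is what makes Theorem~\ref{thm:grad-u-holder} yield a uniform $C^{1,\al/2}$ estimate on $\w_k$ after division by $M_k^{1/2}$; and your final contradiction does not actually require the strong $W^{1,2}$ upgrade or the $2$-homogeneity of $\w_0$, since $\int_{\p B_1}|\w_0|^2=1$ already gives $\w_0\not\equiv 0$ and Rellich gives $\w_k\to\w_0$ in $L^1(B_1)$.
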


\begin{proof}
For simplicity we assume $x_0=0$. For $0<r<r_0$, let $\h$ be a harmonic replacement of $\uu$ in $B_r$. For $2$-homogeneous recalings $\uu_r$ and $\h_r$ of $\uu$ and $\h$, respectively, by \eqref{eq:(u-h)-diff-est} and Lemma~\ref{lem:weak-growth-est} with $\e=\al/4$, we get
 \begin{align*}
    \int_{B_1}\left(|\D(\uu_r-\h_r)|^2+|\uu_r-\h_r|\right)&\le C(n)r^\al\int_{B_1}\left(|\D\uu_r|^2+|\uu_r|\right)+C(n)\\
    &\le C(n,\al,E(\uu,1)).
\end{align*}
This, combined with Lemma~\ref{lem:quad-growth-L1}, gives \begin{align*}
    \int_{B_1}|\h_r|\le \int_{B_1}|\uu_r-\h_r|+\int_{B_1}|\uu_r|\le C(n,\al,E(\uu,1)).
\end{align*}
Since $\h_r$ is harmonic, we also have $$
\int_{B_{1/2}}|\D\h_r|^2\le C(n)\left(\int_{B_1}|\h_r|\right)^2\le C(n,\al,E(\uu,1)).
$$
Therefore, \begin{align*}
    \int_{B_{1/2}}|\D\uu_r|^2\le 2\int_{B_{1/2}}|\D\h_r|^2+2\int_{B_{1/2}}|\D(\uu_r-\h_r)|^2\le C(n,\al,E(\uu,1)).
\end{align*}
This gives the first estimate. For the second one, we observe that \begin{align*}
    &\frac2{r^{n+3}}\int_{\p B_r}|\uu|^2-\frac1{r^{n+2}}\int_{B_r}|\D\uu|^2\\
    &\qquad=\frac2{r^{n+3}}\int_{\p B_r}|\uu-\pp_{0,r}|^2-\frac1{r^{n+2}}\int_{B_r}|\D(\uu-\pp_{0,r})|^2\\
    &\qquad\le \frac2{r^{n+3}}\int_{\p B_r}|\uu-\pp_{0,r}|^2 \le C(n,\al,E(\uu,1)),
\end{align*}
where the last inequality follows from \eqref{eq:u-p-diff-est}.
This, combined with the first estimate, implies the second one.
\end{proof}

Finally, combining Lemma \ref{lem:quad-growth-L1}, Lemma \ref{lem:quad-growth-L2} and  Theorem \ref{thm:grad-u-holder} we obtain the following.

\begin{corollary}\label{UB}Let $\uu$ be an almost minimizer in $B_1$. Then, for $x_0\in \Gamma(\uu) \cap B_{1/2}$, $0<r<r_0$, $r_0=r_0(n,\alpha, E(\uu,1))$, $\uu_{x_0,r}\in C^{1,\al/2}(B_1)$ and  \begin{equation}\label{1a}
\|\uu_{x_0,r}\|_{C^{1,\al/2}(B_1)}\le C(n,\al,E(\uu,1)).
\end{equation}
\end{corollary}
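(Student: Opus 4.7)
The plan is to apply Theorem~\ref{thm:grad-u-holder} to a rescaled copy of $\uu$ whose domain comfortably contains $B_1$, and then to absorb the scaling factors into a bound depending only on $E(\uu,1)$ via the growth estimates from Lemmas~\ref{lem:quad-growth-L1} and~\ref{lem:quad-growth-L2}.

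Fix $x_0\in\Gamma(\uu)\cap B_{1/2}$ and $0<r<1/4$, and consider the double rescaling $\vv:=\uu_{x_0,2r}$. I would first verify by a direct change of variables that $\vv$ is an almost minimizer on $B_1$ with gauge $\tilde\omega(\rho)=(2r\rho)^\alpha\le\rho^\alpha$: if $\eta\in\vv+W^{1,2}_0(B_\rho(y_0);\R^m)$ is a competitor on some $B_\rho(y_0)\Subset B_1$, then $\zeta(z):=(2r)^2\eta((z-x_0)/(2r))$ is a competitor to $\uu$ on $B_{2r\rho}(x_0+2ry_0)\Subset B_1$, and the scaling identity
\[
\int_{B_\rho(y_0)}(|\D\vv|^2+2|\vv|)\,dy=(2r)^{-n-2}\int_{B_{2r\rho}(x_0+2ry_0)}(|\D\uu|^2+2|\uu|)\,dz,
\]
together with the analogous one for $\eta$ and $\zeta$, transfers the almost-minimality from $\uu$ to $\vv$ with the claimed gauge. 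Since $\tilde\omega(\rho)\le\rho^\alpha$, Theorem~\ref{thm:grad-u-holder} applies; taking $K=\overline{B_{1/2}}\Subset B_1$ gives
\[
\|\vv\|_{C^{1,\alpha/2}(B_{1/2})}\le C(n,\alpha)\bigl(E(\vv,1)^{1/2}+1\bigr).
\]
Since $\uu_{x_0,r}(x)=4\vv(x/2)$ for $x\in B_1$, the chain rule translates this into $\|\uu_{x_0,r}\|_{C^{1,\alpha/2}(B_1)}\le C(n,\alpha)(E(\vv,1)^{1/2}+1)$.

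Finally, I would bound $E(\vv,1)$ by the growth estimates. The scaling identity above (with $\rho=1$, $y_0=0$) gives $E(\vv,1)=(2r)^{-n-2}\int_{B_{2r}(x_0)}(|\D\uu|^2+2|\uu|)$, and Lemmas~\ref{lem:quad-growth-L1} and~\ref{lem:quad-growth-L2}, applicable once $2r<r_0(n,\alpha,E(\uu,1))$, bound the right-hand integral by $C(n,\alpha,E(\uu,1))(2r)^{n+2}$. Hence $E(\vv,1)\le C(n,\alpha,E(\uu,1))$, which combined with the previous display yields~\eqref{1a} after shrinking $r_0$ if necessary. All three ingredients being already in place, there is no real obstacle; the only care required is in the scaling bookkeeping. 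Passing through $\uu_{x_0,2r}$ rather than $\uu_{x_0,r}$ itself is just so that Theorem~\ref{thm:grad-u-holder} applies in its stated normalization (domain $B_1$, gauge $\rho^\alpha$, $r_0=1$) without revisiting the proofs of Section~2.
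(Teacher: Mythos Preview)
Your proposal is correct and is exactly the argument the paper has in mind: the paper merely states that Corollary~\ref{UB} follows by combining Lemma~\ref{lem:quad-growth-L1}, Lemma~\ref{lem:quad-growth-L2}, and Theorem~\ref{thm:grad-u-holder}, without writing out any details. Your write-up supplies those details accurately, including the observation that the rescalings remain almost minimizers with smaller gauge and the passage through $\uu_{x_0,2r}$ so that Theorem~\ref{thm:grad-u-holder} applies on $K=\overline{B_{1/2}}$ in its stated normalization.
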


\begin{remark} We remark that in Lemma 
\ref{lem:quad-growth-L1}, Lemma \ref{lem:quad-growth-L2}, and in Corollary \ref{UB}, it is possible to obtain a constant $C$ independent of $E(\uu, 1).$ This can be achieved by a standard compactness argument.\end{remark}
%%%%%%%%%%%%%%%%%%%%%%%%%%%%%%%%%%%%%%%%%%%%%%%%%%%%

\section{Non-degeneracy}
In this section we prove that almost minimizers satisfy a non-degeneracy property. Precisely, we have the following theorem.

\begin{theorem}[Non-Degeneracy]\label{ND}Let $\uu$ be an almost minimizer in $B_1$. There exist constants $c_0=c_0(n,m,\alpha, E(\uu, 1))>0$ and $r_0=r_0(n,m,\alpha)>0$ such that if $x_0\in\Gamma(\uu) \cap B_{1/2}$ and $0<r<r_0$, then $$
\sup_{B_{r}(x_0)}|\uu|\ge c_0r^2.
$$

\end{theorem}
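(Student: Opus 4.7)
The plan is to argue by contradiction via a blow-up. Suppose the theorem fails; then one can extract sequences $\{\uu_k\}$ of almost minimizers with uniformly bounded energies $E(\uu_k,1)\le M$, points $x_k\in\Gamma(\uu_k)\cap B_{1/2}$, and radii $r_k\to 0$ such that
$$\sup_{B_{r_k}(x_k)}|\uu_k|\le \e_k r_k^2,\qquad \e_k\to 0.$$
First I would form the $2$-homogeneous rescalings $\tilde\uu_k:=(\uu_k)_{x_k,r_k}$. By Corollary~\ref{UB} these are uniformly bounded in $C^{1,\al/2}(B_1)$, so along a subsequence $\tilde\uu_k\to\tilde\uu_0$ in $C^1(\overline{B_{1/2}};\R^m)$, and the assumption forces $\tilde\uu_0\equiv\mathbf 0$ on $B_1$. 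Each $\tilde\uu_k$ is an almost minimizer of the \emph{same} functional with gauge $\omega(\rho)=(r_k\rho)^\al\to 0$, so by mirroring the passage-to-limit argument used in the proof of Lemma~\ref{lem:weak-growth-est}, the limit $\tilde\uu_0$ is in fact an energy minimizer on $B_{1/2}$.

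Next I would quantify the energy decay at scale $r_k$ by testing almost-minimality against a concrete competitor. Take a radial cutoff $\eta$ with $\eta\equiv 0$ on $B_{r_k/2}(x_k)$, $\eta\equiv 1$ on $\p B_{r_k}(x_k)$, and $|\D\eta|\le C/r_k$, and set $\vv_k=\eta\uu_k$. Using the splitting $|\D\vv_k|^2\le(1+\delta)\eta^2|\D\uu_k|^2+C_\delta|\D\eta|^2|\uu_k|^2$ with the calibration $\delta=\e_k$, and exploiting $|\uu_k|\le\e_k r_k^2$ on the annulus, the almost-minimizing inequality together with $\vv_k=\uu_k$ on $\p B_{r_k}(x_k)$ rearranges (after absorbing and using Lemma~\ref{lem:quad-growth-L2}) to
$$\int_{B_{r_k/2}(x_k)}\bigl(|\D\uu_k|^2+2|\uu_k|\bigr)\le C(\e_k+r_k^\al)\,r_k^{n+2}.$$
Combined with the trivial estimate $\int_{\p B_{r_k/2}(x_k)}|\uu_k|^2\le C\e_k^2 r_k^{n+3}$, this shows $W(\uu_k,x_k,r_k/2)\to 0$.

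The contradiction must come from the fact that the free-boundary condition $x_k\in\p\{|\uu_k|>0\}$ is inherited by the limit, i.e.\ that $0\in\Gamma(\tilde\uu_0)$, so that the non-degeneracy for pure energy minimizers from~\cite{AndShaUraWei15} applied to $\tilde\uu_0$ yields $\sup_{B_1}|\tilde\uu_0|>0$, in conflict with $\tilde\uu_0\equiv 0$. The hard part is precisely this inheritance: one needs to rule out the degenerate scenario in which the positivity sets of the $\tilde\uu_k$ collapse to density zero around the origin. I expect to obtain this from the quantitative energy decay above combined with Weiss monotonicity (Theorem~\ref{thm:weiss-mon}): by scaling, $W(\uu_k,x_k,r_k)\to W(\tilde\uu_0,0,1)$ along the convergent subsequence, and matching the decay $W(\uu_k,x_k,r_k/2)\to 0$ against the universal positive value of the minimizer's Weiss functional at a genuine free-boundary point produces the contradiction. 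The delicate technical step, which I view as the crux, is making rigorous the preservation of the free-boundary point under the blow-up (equivalently, a uniform density estimate on $\{|\tilde\uu_k|>0\}$ near $0$) without circularly invoking the very non-degeneracy being proved for almost minimizers.
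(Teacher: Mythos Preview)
You have correctly identified the crux, and unfortunately your proposed way around it does not work. If $\tilde\uu_0\equiv\mathbf 0$ on $B_1$ (which your hypothesis forces), then $0$ is \emph{not} a free boundary point of $\tilde\uu_0$, and $W(\tilde\uu_0,0,1)=0$; there is no ``universal positive value'' to match against. The alternative---arguing that $W(\uu_k,x_k,0+)\ge\be_n/2$ for each $k$ and then using Weiss monotonicity---requires knowing that blowups of $\uu_k$ at $x_k$ are nonzero, which in this paper is a consequence of non-degeneracy (see the proof of Lemma~\ref{lem:blowup-exist}, which comes after Theorem~\ref{ND} and uses it). So that route is circular, exactly as you feared.

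The paper avoids the blow-up-and-inherit problem entirely. The key device is Lemma~\ref{imp}: at each fixed scale $r$, instead of a cutoff competitor, compare $\uu_{x_0,r}$ with the \emph{solution} $\vv_r$ of \eqref{eq:sol} in $B_1$ sharing its boundary data. If $E(\uu_{x_0,r},1)\le\e_0$ is small, then $E(\vv_r,1)\le\e_0$, and the already established non-degeneracy for solutions from \cite{AndShaUraWei15} forces $\vv_r\equiv\mathbf 0$ on $B_{1/2}$. Almost minimality and an energy computation then give $E(\uu_{x_0,r/2},1)\le\e_0$ with the \emph{same} $\e_0$, so the smallness iterates to all dyadic scales (Lemma~\ref{IND}). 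This is contradicted directly by taking $x_0\in\{|\uu|>0\}$, where $|\uu(x_0)|>0$ makes the rescaled $L^1$-norm blow up. Finally, Theorem~\ref{ND} follows because $\sup_{B_1}|\uu_{x_0,r}|<c_0$ plus the uniform $C^{1,\al/2}$ bound from Corollary~\ref{UB} and an interpolation inequality force $E(\uu_{x_0,r},1/2)$ to be small, feeding into Lemma~\ref{IND}. The point is that the contradiction is obtained from the elementary fact $|\uu(x_0)|>0$ at points of the positivity set, not from any property of a limiting object.
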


The proof of Theorem \ref{ND} relies on the following lemma.

\begin{lemma}\label{imp}
Let $\uu$ be an almost minimizer in $B_1$. Then, there exist small constants $\e_0=\e_0(n,m)>0$ and $r_0=r_0(n,m,\al)>0$ such that for $0<r<r_0$, if $E(\uu_{x_0,r},1)\le \e_0$ then $E(\uu_{x_0,r/2},1)\le \e_0$.
\end{lemma}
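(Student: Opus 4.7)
The plan is a blowup/compactness argument whose limit is an \emph{exact} minimizer, from which the Weiss-type lower bound at a free boundary point produces the contradiction.

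Suppose the claim fails. Set $\e_0:=c_n/2$ where $c_n:=\be_n/(2\cdot 4^{n+2})$, which depends only on $n$ (since $\be_n$ does). Then for each $k\in\N$ one finds an almost minimizer $\uu_k$ in $B_1$, a free boundary point $x_k\in\Gamma(\uu_k)\cap B_{1/2}$, and a scale $r_k<1/k$ such that
\[
E(\vv_k,1)\le\e_0 \quad\text{and}\quad E((\uu_k)_{x_k,r_k/2},1)>\e_0,
\]
where $\vv_k:=(\uu_k)_{x_k,r_k}$. The change of variables $y=x/2$ yields the scaling identity $E((\uu_k)_{x_k,r_k/2},1)=2^{n+2}E(\vv_k,1/2)$, so the second inequality becomes $E(\vv_k,1/2)>\e_0/2^{n+2}$.

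Each $\vv_k$ is an almost minimizer on $B_{1/(2r_k)}\supset B_1$ (for $r_k$ small) with gauge $(r_k\rho)^\al\to 0$, and $\vv_k(0)=\D\vv_k(0)=0$ because $x_k\in\Gamma(\uu_k)$. The uniform bound $E(\vv_k,1)\le\e_0$ together with the interior version of Theorem~\ref{thm:grad-u-holder} gives uniform $C^{1,\al/2}(K)$ estimates on every $K\Subset B_1$. A diagonal subsequence then converges in $C^1_\loc(B_1;\R^m)$ to a limit $\vv_0$. Because the almost-minimization factors $1+(r_k\rho)^\al$ tend to $1$ and the $W^{1,2}_\loc$-convergence is strong, $\vv_0$ is an exact minimizer of $\int(|\D\vv|^2+2|\vv|)$ in each $B_\rho\Subset B_1$ (comparing with the competitor $\w+(\vv_k-\vv_0)$). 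Passing to the limit, $\vv_0(0)=\D\vv_0(0)=0$, $E(\vv_0,1)\le\e_0$, and $E(\vv_0,1/2)\ge\e_0/2^{n+2}>0$; in particular $\vv_0\not\equiv 0$ in $B_{1/2}$.

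The key step is a uniform lower bound $E(\vv_0,1)\ge c_n$ contradicting $E(\vv_0,1)\le\e_0=c_n/2$. Since $\vv_0(0)=0$ and $\vv_0\not\equiv 0$ in $B_{1/2}$, either $0\in\Gamma(\vv_0)$, or $\vv_0$ vanishes identically on a maximal ball $B_\rho(0)\subsetneq B_{1/2}$, in which case the $C^{1,1}$-regularity of minimizers (see~\cite{AndShaUraWei15}) produces a free boundary point $z\in\p B_\rho\subset\overline{B_{1/2}}$. Apply the Weiss monotonicity formula for exact minimizers (the $\al\to\infty$ reduction of Theorem~\ref{thm:Weiss}, cf.~\cite{AndShaUraWei15}) at $z$ and at radius $1/4$, for which $B_{1/4}(z)\subset B_1$:
\[
4^{n+2}\Bigl[E(\vv_0,B_{1/4}(z))-8\int_{\p B_{1/4}(z)}|\vv_0|^2\Bigr]\ge\frac{\be_n}{2}.
\]
Dropping the nonnegative boundary term gives $E(\vv_0,1)\ge E(\vv_0,B_{1/4}(z))\ge\be_n/(2\cdot 4^{n+2})=c_n$, the desired contradiction. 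The threshold $r_0=r_0(n,m,\al)$ emerges from the blowup: $r_k$ must be small enough to embed $B_1$ in the domain of $\vv_k$ and to drive the gauge factor to $1$ in the limit.

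The main obstacle is ensuring the limiting function $\vv_0$ is a genuine exact minimizer, so that the clean Weiss lower bound $\be_n/2$ is available at a free boundary point; this rests on the strong $C^1_\loc$-convergence provided by Theorem~\ref{thm:grad-u-holder} and the vanishing of the gauge.
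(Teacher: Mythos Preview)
There is a genuine gap: the lemma makes no assumption that $x_0$ is a free boundary point, yet your contradiction setup posits $x_k\in\Gamma(\uu_k)$. That hypothesis is what gives you $\vv_k(0)=\D\vv_k(0)=0$, hence $\vv_0(0)=\D\vv_0(0)=0$, which in turn lets you locate a free boundary point of $\vv_0$ in $\overline{B_{1/2}}$ and invoke the Weiss lower bound there. Without it the limit $\vv_0$ may have no free boundary point in $B_{3/4}$ at all --- it may well be that $|\vv_0|>0$ throughout --- and then your Weiss step does not apply. This is not a cosmetic omission: the lemma is used downstream (Lemma~\ref{IND}) at points $x_0\in\{|\uu|>0\}$, so the free-boundary hypothesis is genuinely unavailable.

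Your compactness framework can be salvaged, but the final contradiction must change. Once you have an exact minimizer $\vv_0$ with $E(\vv_0,1)\le\e_0$ and $\vv_0\not\equiv 0$ in $B_{1/2}$, apply the \emph{nondegeneracy of solutions} from \cite{AndShaUraWei15} at any point of $B_{1/2}\cap\{|\vv_0|>0\}$ to obtain $\sup_{B_{3/4}}|\vv_0|\ge c(n)$, and combine this with the interior gradient bound to get $\int_{B_1}|\vv_0|\ge c(n,m)$, contradicting $E(\vv_0,1)\le\e_0$ for $\e_0$ small. This is exactly the mechanism the paper uses, only the paper applies it directly rather than in a limit: it compares $\uu_r$ with the solution $\vv_r$ of \eqref{eq:sol} sharing its boundary values on $\p B_1$, shows $\vv_r\equiv\mathbf0$ in $B_{1/2}$ via nondegeneracy when $\e_0$ is small, and then exploits almost-minimality to bound the difference by $C(n)r^{\al/2}\le\e_0/2^{n+2}$ for $r<r_0(n,m,\al)$. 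The direct route is quantitative and makes the origin of the threshold $r_0$ transparent; your compactness route is softer but, as written, leans on an assumption the statement does not supply.
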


\begin{proof}
For simplicity we may assume $x_0=0$. For $0<r<r_0$ to be specified later, let $\vv_r$ be a solution of $\Delta\vv_r=\frac{\vv_r}{|\vv_r|}\rchi_{\{|\vv_r|>0\}}$ in $B_1$ with $\vv_r=\uu_r$ on $\p B_1$. We claim that if $\e_0=\e_0(n,m)>0$ is small, then $\vv_r\equiv \mathbf{0}$ in $B_{1/2}$. Indeed, if not, then $\sup_{B_{3/4}}|\vv_r|\ge c_0(n)$ by the non-degeneracy of the solution $\vv_r$. Thus $|\vv_r(z_0)|\ge c_0(n)$ for some $z_0\in \overline{B_{3/4}}$. From $E(\vv_r,1)\le E(\uu_r,1)\le \e_0$ together with  the estimate for the solution $\vv_r$,
$$\sup_{B_{7/8}}|\D\vv_r|\le C_1(n,m)(E(\vv_r,1)+1)
$$ we have that $$\sup_{B_{7/8}}|\D\vv_r|\le C(n,m),$$
hence $$
|\vv_r|\ge\frac{c_0(n)}2\quad\text{in}\quad B_{\rho_0}(z_0)
$$
for some small $\rho_0=\rho_0(n,m)>0$. This gives that $$
c(n,m)\le\int_{B_{\rho_0}(z_0)}|\vv_r|\le E(\vv_r,1)\le \e_0,
$$
which is a contradiction if $\e_0=\e_0(n,m)$ is small.\\

Now, we use again that  $E(\vv_r,1)\le \e_0$ together with the fact that $\uu_r$ is an almost minimizer in $B_1$ with gauge function $\omega(\rho)=(r\rho)^\al$ to get \begin{align*}
    \int_{B_1}\left(|\D\uu_r|^2+2|\uu_r|\right)&\le (1+ r^\alpha)\int_{B_1}\left(|\D\vv_r|^2+2|\vv_r|\right)\\
    &\le \e_0 r^\al+\int_{B_1}\left(|\D\vv_r|^2+2|\vv_r|\right)
\end{align*}
for $0<r<r_0$. Then \begin{align*}
    \int_{B_1}|\D(\uu_r-\vv_r)|^2&=\int_{B_1}\left(|\D\uu_r|^2-|\D\vv_r|^2+2\D(\vv_r-\uu_r)\D\vv_r\right)\\
    &=\int_{B_1}\left(|\D\uu_r|^2-|\D\vv_r|^2\right)-2\int_{B_1}\left((\vv_r-\uu_r)\frac{\vv_r}{|\vv_r|}\chi_{\{|\vv_r|>0\}}\right)\\
    &\le \e_0 r^\al+2\int_{B_1}(|\vv_r|-|\uu_r|)-2\int_{B_1}\left(|\vv_r|-\uu_r\frac{\vv_r}{|\vv_r|}\chi_{\{|\vv_r|>0\}}\right)\\
    &\le \e_0 r^\al.
\end{align*}
Combining this with Poincar\'e's inequality and H\"older's inequality, we obtain $$
\int_{B_1}\left(|\D(\uu_r-\vv_r)|^2+2|\uu_r-\vv_r|\right)\le C(n)r^{\al/2}.
$$
Since $\vv_r\equiv\mathbf0$ in $B_{1/2}$, we see that for $0<r<r_0(n,m,\al)$, \begin{align*}
    E(\uu_r,1/2)=\int_{B_{1/2}}\left(|\D\uu_r|^2+2|\uu_r|\right)\le C(n)r^{\al/2}\le\frac{\e_0}{2^{n+2}}.
\end{align*}
Therefore, we conclude that \begin{equation*}
E(\uu_{r/2},1)=2^{n+2}E(\uu_r,1/2)\le \e_0.\qedhere
\end{equation*}
\end{proof}

Lemma \ref{imp} immediately implies the following integral form of non-degeneracy.

\begin{lemma}\label{IND}
Let $\uu$, $\e_0$ and $r_0$ be as in the preceeding lemma. If $x_0\in \overline{\{|\uu|>0\}}\cap B_{1/2}$ and $0<r<r_0$, then \begin{align}
    \label{eq:alm-min-nondeg}
    \int_{B_r(x_0)}\left(|\D\uu|^2+2|\uu|\right)\ge \e_0 r^{n+2}.
\end{align}
\end{lemma}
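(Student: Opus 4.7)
The plan is to argue by contradiction, iterating Lemma~\ref{imp} to propagate a hypothetical energy smallness down to arbitrarily small scales, and then derive a contradiction from the presence of a point in $\overline{\{|\uu|>0\}}$. Using the change of variables $y = x_0 + rx$,
\[
E(\uu_{x_0,r},1) = r^{-n-2}\!\int_{B_r(x_0)}\!\left(|\D\uu|^2 + 2|\uu|\right),
\]
so the desired estimate is equivalent to $E(\uu_{x_0,r},1) \ge \e_0$. Suppose to the contrary that $E(\uu_{x_0,r},1) < \e_0$ for some $x_0 \in \overline{\{|\uu|>0\}} \cap B_{1/2}$ and some $0<r<r_0$. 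Since Lemma~\ref{imp} halves the radius while preserving the bound $\le\e_0$ (the iteration is legal because $r/2^k < r < r_0$ for every $k$), we obtain $E(\uu_{x_0,r/2^k},1) \le \e_0$ for all $k \ge 0$.

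I would first dispatch the case $x_0 \in \{|\uu|>0\}$ directly. Setting $\delta := |\uu(x_0)| > 0$, the continuity of $\uu$ from Theorem~\ref{thm:grad-u-holder} yields some $\rho_0 > 0$ with $|\uu| \ge \delta/2$ on $B_{\rho_0}(x_0)$. For all sufficiently large $k$ so that $\rho := r/2^k < \rho_0$,
\[
E(\uu_{x_0,\rho},1) \;\ge\; \rho^{-n-2}\!\int_{B_\rho(x_0)} 2|\uu| \;\ge\; c(n)\,\delta\,\rho^{-2},
\]
and letting $k \to \infty$ forces the right-hand side to blow up, contradicting the uniform bound $\e_0$.

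The remaining case $x_0 \in \overline{\{|\uu|>0\}} \setminus \{|\uu|>0\}$ is handled by approximation. Choose $x_\ell \to x_0$ with $x_\ell \in \{|\uu|>0\}$; for $\ell$ large, $x_\ell \in B_{1/2}$, so the previous step applied to each $x_\ell$ gives
\[
\int_{B_r(x_\ell)}\!\left(|\D\uu|^2 + 2|\uu|\right) \;\ge\; \e_0\, r^{n+2} \quad\text{for every } 0<r<r_0.
\]
Since $|\D\uu|^2 + 2|\uu| \in L^1_{\loc}$ and $|B_r(x_\ell) \triangle B_r(x_0)| \to 0$, dominated convergence transfers the lower bound to $x_0$.

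The argument is essentially mechanical once Lemma~\ref{imp} is available; the only slightly non-obvious move is the two-step structure, required because the pointwise lower bound on $|\uu|$ at the center of the ball is only available at points where $\uu$ is nonzero, so free boundary points must be reached by a limiting argument rather than directly. No substantial obstacle is anticipated.
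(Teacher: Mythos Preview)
Your proof is correct and follows essentially the same approach as the paper: reduce to the case $x_0\in\{|\uu|>0\}$ by an approximation/continuity argument, then iterate Lemma~\ref{imp} to obtain $E(\uu_{x_0,r/2^k},1)\le\e_0$ for all $k$, and derive a contradiction from the pointwise positivity $|\uu(x_0)|>0$ forcing $E(\uu_{x_0,r/2^k},1)\to\infty$. The paper compresses your Step~2 into the single remark ``by the continuity of $\uu$, it is enough to prove \eqref{eq:alm-min-nondeg} for $x_0\in\{|\uu|>0\}\cap B_{1/2}$,'' but the content is identical.
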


\begin{proof}
By the continuity of $\uu$, it is enough to prove \eqref{eq:alm-min-nondeg} for $x_0\in\{|\uu|>0\}\cap B_{1/2}$. Towards a contradiction, we suppose that $\int_{B_r(x_0)}\left(|\D\uu|^2+2|\uu|\right)\le\e_0r^{n+2}$, or equivalently $E(\uu_{x_0,r},1)\le \e_0$. Then, by the previous lemma we have $E(\uu_{x_0,r/2^k},1)\le\e_0$ for all $k\in\N$. From $|\uu(x_0)|>0$, we see that $|\uu|>c_0>0$ in $B_{r/2^k}(x_0)$ for large $k$. Therefore, \begin{align*}
    \e_0&\ge E(\uu_{x_0,r/2^k},1)=\frac1{(r/2^k)^{n+2}}\int_{B_{r/2^k}(x_0)}\left(|\D\uu|^2+2|\uu|\right)\\
    &\ge\frac1{(r/2^k)^{n+2}}\int_{B_{r/2^k}(x_0)}2c_0= \frac{C(n)c_0}{(r/2^k)^2}\to\infty\quad\text{as }k\to\infty.
\end{align*}
This is a contradiction, as desired.
\end{proof}

We are now ready to prove Theorem \ref{ND}.

\

\textit{Proof of Theorem \ref{ND}.} Assume by contradiction that 
$$\uu_{x_0,r}(x) < c_0, \quad  \text{in $ B_{1},$} $$ with $c_0$ small, to be made precise later. Let $\epsilon_0, r_0$ be the constants in Lemma \ref{imp}, and let $r < r_0$. Then, by interpolation together with estimate \eqref{1a},
\begin{align*}\|\nabla \uu_{x_0,r}\|_{L^\infty (B_{1/2})} & \leq \epsilon \|\uu_{x_0,r}\|_{C^{1,\alpha/2}(B_{3/4})} + K(\epsilon) \|\uu_{x_0,r}\|_{L^\infty(B_{3/4})}\\
&\le \epsilon  C(n,\alpha, E(\uu, 1)) + K(\epsilon)c_0 \leq \frac{\epsilon_0}{2^{n+3}},
 \end{align*}
by choosing $\epsilon= \frac{\epsilon_0}{2^{n+4}C}$ and $c_0 \leq \epsilon_0/(2^{n+4}K(\epsilon))$.  Thus, if $c_0 < \epsilon_0/2^{n+3}$, then  $E(\uu_{x_0,r}, \frac{1}{2}) < \frac{\epsilon_0}{2^{n+2}}$, which contradicts Lemma \ref{IND}.
\qed

%%%%%%%%%%%%%%%%%%%%%%%%%%%%%%%%%%%%%%%%%%%%%%

\section{$2$-Homogeneous blowups}

In this section we consider $2$-homogeneous rescalings and blowups of almost minimizers and complete the proof of Theorem~\ref{thm:Weiss}. We then obtain the polynomial decay estimate of Weiss-type energies, with the help of the epiperimetric inequality applied to solutions of \eqref{eq:sol}.

\begin{lemma}\label{lem:blowup-exist}
Suppose $\uu$ is an almost minimizer in $B_1$ and $x_0\in \Gamma(\uu)\cap B_{1/2}$. Then, for $2$-homogeneous rescalings $\uu_{x_0,t}$, there exists $\uu_{x_0,0}\in C^1_{\loc}(\R^n;\R^m)$ such that over a subsequence $t=t_j\to 0+$, $$
\uu_{x_0,t_j}\to \uu_{x_0,0}\quad\text{in }C^1_{\loc}(\R^n;\R^m).
$$
Moreover, $\uu_{x_0,0}$ is a nonzero $2$-homogeneous global solution.
\end{lemma}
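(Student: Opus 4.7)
The plan is to extract a $C^1_{\loc}$ subsequential limit of the $2$-homogeneous rescalings $\uu_{x_0,t}$ by compactness, and then to identify it as a nonzero $2$-homogeneous global solution using, in order, the non-degeneracy theorem, the passage to the limit in the almost-minimizing inequality, and the Weiss-type monotonicity formula.

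\emph{Step 1: Uniform regularity of the rescalings.} For each $t\in(0,1/4)$, the rescaling $\uu_{x_0,t}$ is again an almost minimizer, now on $B_{1/(2t)}$, with the improved gauge $\omega(r)=(tr)^\al\le r^\al$, and $0\in\Gamma(\uu_{x_0,t})$. Fix any $R\ge 1$. Lemma~\ref{lem:quad-growth-L1} and Lemma~\ref{lem:quad-growth-L2} applied to $\uu$ on $B_{tR}(x_0)$ scale to
\[
E(\uu_{x_0,t},R)\le C(n,\al,E(\uu,1))\,R^{n+2},
\]
for all $t<r_0/R$. Hence Theorem~\ref{thm:grad-u-holder} applied to $\uu_{x_0,t}$ on $B_R$ (or equivalently Corollary~\ref{UB} at scale $R$) yields
\[
\|\uu_{x_0,t}\|_{C^{1,\al/2}(B_{R/2})}\le C(n,\al,R,E(\uu,1)).
\]

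\emph{Step 2: Compactness and non-triviality.} By Arzel\`a--Ascoli on $B_1,B_2,B_4,\dots$ and a diagonal extraction along any sequence $t_j\to 0+$, there is $\uu_{x_0,0}\in C^1_{\loc}(\R^n;\R^m)$ with $\uu_{x_0,t_j}\to\uu_{x_0,0}$ in $C^1_{\loc}$. Non-degeneracy (Theorem~\ref{ND}) gives $\sup_{B_1}|\uu_{x_0,t}|\ge c_0>0$ for all small $t$, hence $\sup_{B_1}|\uu_{x_0,0}|\ge c_0$, so $\uu_{x_0,0}\not\equiv\mathbf 0$.

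\emph{Step 3: The blow-up is a minimizer, hence a solution.} Fix $R>0$ and any competitor $\vv\in\uu_{x_0,0}+W^{1,2}_0(B_R;\R^m)$. Set $\vv_j:=\vv-\uu_{x_0,0}+\uu_{x_0,t_j}\in\uu_{x_0,t_j}+W^{1,2}_0(B_R;\R^m)$. The almost minimizing property of $\uu_{x_0,t_j}$, whose gauge at scale $R$ is $(t_jR)^\al\to 0$, gives
\[
\int_{B_R}\bigl(|\D\uu_{x_0,t_j}|^2+2|\uu_{x_0,t_j}|\bigr)\le (1+(t_jR)^\al)\int_{B_R}\bigl(|\D\vv_j|^2+2|\vv_j|\bigr).
\]
By lower semicontinuity on the left and strong $W^{1,2}$ convergence $\vv_j\to\vv$ on the right, passing $j\to\infty$ yields that $\uu_{x_0,0}$ minimizes the functional on every ball. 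In particular, by \cite{AndShaUraWei15}, $\uu_{x_0,0}$ solves~\eqref{eq:sol} on $\R^n$.

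\emph{Step 4: $2$-homogeneity.} Since $W(\uu,x_0,\cdot)$ is monotone nondecreasing (Theorem~\ref{thm:weiss-mon}) and bounded above on $(0,1/2]$, its limit $W(\uu,x_0,0+)$ is finite, and integrating the differential inequality from Theorem~\ref{thm:weiss-mon} gives
\[
\int_0^{1/2}\frac{e^{at^\al}}{t^{n+2}}\int_{\p B_t(x_0)}\Bigl|\p_\nu\uu-\tfrac{2(1-bt^\al)}{t}\uu\Bigr|^2\,dt<\infty.
\]
Changing variables $t=t_j\si$ on $(t_j\rho_1,t_j\rho_2)$ and using the scaling identities $\uu(x_0+t_j\si\nu)=t_j^2\uu_{x_0,t_j}(\si\nu)$, $\p_\nu\uu(x_0+t_j\si\nu)=t_j\p_\nu\uu_{x_0,t_j}(\si\nu)$ and $dS=t_j^{n-1}dS_\si$, this integral becomes
\[
\int_{\rho_1}^{\rho_2}\frac{e^{a(t_j\si)^\al}}{\si^{n+2}}\int_{\p B_\si}\Bigl|\p_\nu\uu_{x_0,t_j}-\tfrac{2(1-b(t_j\si)^\al)}{\si}\uu_{x_0,t_j}\Bigr|^2\,d\si.
\]
The finiteness of the unscaled integral forces the piece over $(0,t_j\rho_2]$ to vanish as $t_j\to 0$. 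Combining this with the $C^1_{\loc}$ convergence $\uu_{x_0,t_j}\to\uu_{x_0,0}$ and $(1-b(t_j\si)^\al)\to 1$, we conclude
\[
\int_{\rho_1}^{\rho_2}\frac{1}{\si^{n+2}}\int_{\p B_\si}\Bigl|\p_\nu\uu_{x_0,0}-\tfrac{2}{\si}\uu_{x_0,0}\Bigr|^2\,d\si=0,
\]
so $x\cdot\D\uu_{x_0,0}=2\,\uu_{x_0,0}$, i.e.\ $\uu_{x_0,0}$ is $2$-homogeneous.

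The main obstacle is Step~4: one must justify the rescaling of the monotonicity integrand and confirm that the correction factor $(1-bt^\al)$ and the exponential prefactor $e^{at^\al}$ produce only negligible errors in the subsequential passage. The other three steps are routine consequences of the results already proved.
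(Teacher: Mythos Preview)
Your argument is correct and follows essentially the same route as the paper's: uniform $C^{1,\al/2}$ bounds from the growth estimates, diagonal extraction, non-degeneracy for nontriviality, passage to the limit in the almost-minimizing inequality, and the Weiss monotonicity formula for $2$-homogeneity. Two small remarks: in Step~4, ``monotone nondecreasing and bounded above'' does not by itself give $W(\uu,x_0,0+)>-\infty$, which you need for the integral to be finite; the paper derives this lower bound from Lemma~\ref{lem:quad-growth-L2} (via \eqref{eq:u-p-diff-est}), and you should cite it. In Step~3 the paper instead compares $\uu_{x_0,t_j}$ only with the solution $\vv_{t_j}$ of \eqref{eq:sol} on $B_R$ sharing its boundary data (extracting a $C^1$ limit $\vv_0$ and concluding $\uu_{x_0,0}=\vv_0$), whereas your direct competitor argument $\vv_j:=\vv-\uu_{x_0,0}+\uu_{x_0,t_j}$ is equally valid and arguably cleaner.
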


\begin{proof}
For simplicity we assume $x_0=0$.

\medskip\noindent \emph{Step 1.} We first prove the $C^1$-convergence. From Corollary~\ref{UB}, there is $\uu_0\in C^1(B_{R/2};\R^m)$ such that over a subsequence $t=t_j\to 0+$,
$$
\uu_{t_j}\to\uu_0\quad\text{in }C^1(B_{R/2};\R^m).
$$
By letting $R\to\infty$ and using a Cantor's diagonal argument, we obtain that for another subsequence $t=t_j\to 0+$, 
\begin{align*}
\uu_{t_j}\to\uu_0\quad\text{in }C^1_{\loc}(\R^n;\R^m).
\end{align*}

\noindent \emph{Step 2.}
It follows from the non-degeneracy, $\sup_{B_1}|\uu_{t_j}|\ge c_0>0$, and the $C^1$-convergence of $\uu_{t_j}$ to $\uu_0$ that $\uu_0$ is nonzero. To show that $\uu_0$ is a global solution, for fixed $R>1$ and small $t_j$, let $\vv_{t_j}$ be the solution in $B_R$ with $\vv_{t_j}=\uu_{t_j}$ on $\p B_R$. Then, by elliptic theory, \begin{align*}
    \|\vv_{t_j}\|_{C^{1,\al/2}(\overline{B_R};\R^m)}\le C(n,m,R)(\|\uu_{t_j}\|_{C^{1,\al/2}(\overline{B_R};\R^m)}+1)\le C(n,m,\al,R,E(\uu,1)).
\end{align*}
Thus, there exists a solution $\vv_0\in C^1(\overline{B_R};\R^m)$ such that $$
\vv_{t_j}\to \vv_0\quad\text{in }C^1(\overline{B_R};\R^m).
$$
Moreover, we use again that $\uu_{t_j}$ is an almost minimizer in $B_{1/2t_j}$ with a gauge function $\omega(\rho)=(t_j\rho)^\al$ to have $$
\int_{B_R}\left(|\D\uu_{t_j}|^2+2|\uu_{t_j}|\right)\le (1+(t_jR)^\al)\int_{B_R}\left(|\D\vv_{t_j}|^2+2|\vv_{t_j}|\right).
$$
By taking $t_j\to 0$ and using the $C^1$-convergence of $\uu_{t_j}$ and $\vv_{t_j}$, we obtain 
$$
\int_{B_R}\left(|\D\uu_0|^2+2|\uu_0|\right)\le \int_{B_R}\left(|\D\vv_0|^2+2|\vv_0|\right).
$$
Since $\vv_{t_j}=\uu_{t_j}$ on $\p B_R$, we also have $\vv_0=\uu_0$ on $\p B_R$. This means that $\uu_0=\vv_0$ is an energy minimizer, or a solution in $B_R$. Since $R>1$ is arbitrary, we conclude that $\uu_0$ is a global solution.

\medskip\noindent \emph{Step 3.} Now, we prove that $\uu_0$ is $2$-homogeneous. Fix $0<r<R<\infty$. By Theorem~\ref{thm:weiss-mon}, we have that for small $t_j$, \begin{align}\begin{split}\label{eq:W-diff-bound}
    &W(\uu,Rt_j)-W(\uu,rt_j)\\
    &\qquad=\int_{rt_j}^{Rt_j}\frac{d}{d\rho}W(\uu,\rho)\,d\rho\\
    &\qquad\ge \int_{rt_j}^{Rt_j}\frac1{\rho^{n+4}}\int_{\p B_\rho}|x\cdot\D\uu-2(1-b\rho^\al)\uu|^2\,dS_xd\rho\\
    &\qquad=\int_r^R\frac{t_j}{(t_j\si)^{n+4}}\int_{\p B_{t_j\si}}|x\cdot\D\uu-2(1-b(t_j\si)^\al)\uu|^2\,dS_xd\si\\
    &\qquad=\int_r^R\frac1{t_j^4\si^{n+4}}\int_{\p B_\si}|t_jx\cdot\D\uu(t_jx)-2(1-b(t_j\si)^\al)\uu(t_jx)|^2\,dS_xd\si\\
    &\qquad=\int_r^R\frac1{\si^{n+4}}\int_{\p B_\si}|x\cdot\D\uu_{t_j}-2(1-b(t_j\si)^\al)\uu_{t_j}|^2\,dS_xd\si.
\end{split}\end{align}
Note that in \eqref{eq:L1-bound} we showed $$
\frac{2e^{ar^\al}}{r^{n+2}}\int_{B_r}|\uu|\le W(\uu,r)+\frac{2e^{ar^\al}}{r^{n+3}}\int_{\p B_r}|\uu-\pp_{0,r}|^2.
$$
Since we also proved in the proof of Lemma~\ref{lem:quad-growth-L1} that $$
\frac{2e^{ar^\al}}{r^{n+3}}\int_{\p B_r}|\uu-\pp_{0,r}|^2\le C_0,
$$
we see that $W(\uu,0+)\ge -C_0$ and thus $W(\uu,0+)\in (-\infty,\infty)$. Using this and taking $t_j\to 0+$ in \eqref{eq:W-diff-bound}, we get \begin{align*}
    0=W(\uu,0+)-W(\uu,0+)\ge \int_r^R\frac1{\si^{n+3}}\int_{\p B_\si}|x\cdot\D\uu_0-2\uu_0|^2\,dS_xd\si.
\end{align*}
Taking $r\to0+$ and $R\to\infty$, we conclude that $x\cdot\D\uu_0-2\uu_0=0$ in $\R^n$, which implies that $\uu_0$ is $2$-homogeneous in $\R^n$.
\end{proof}

\begin{proof}[Proof of Theorem~\ref{thm:Weiss}]
The monotonicity of the Weiss energy $W(\uu,x_0,\cdot)$ follows from Theorem~\ref{thm:weiss-mon}. To prove \eqref{eq:Weiss-limit-classify}, we observe that for $\uu_{x_0,t}=\frac{\uu(tx+x_0)}{t^2}$, \begin{align*}
    W(\uu,x_0,t)&=e^{at^\al}\left(\frac1{t^{n+2}}\int_{B_t(x_0)}\left(|\D\uu|^2+2|\uu|\right)-\frac{2(1-bt^\al)}{t^{n+3}}\int_{\p B_t(x_0)}|\uu|^2\right)\\
    &=e^{at^\al}\left(\int_{B_1}\left(|\D\uu_{x_0,t}|^2+2|\uu_{x_0,t}|\right)-2(1-bt^\al)\int_{\p B_1}|\uu_{x_0,t}|^2\right).
\end{align*}
By Lemma~\ref{lem:blowup-exist}, we can take a subsequence $t=t_j\to 0+$ such that $\uu_{x_0,t_j}\to \uu_{x_0,0}$ in $C^1_{\loc}(\R^n;\R^m)$ for some nonzero $2$-homogeneous solution $\uu_{x_0,0}$, which gives \begin{align*}
    W(\uu,x_0,0+)=M(\uu_{x_0,0}).
\end{align*}
Therefore, by Corollary 1 in \cite{AndShaUraWei15}, \begin{equation*}
    W(\uu,x_0,0+)= \be_n/2\quad\text{or}\quad W(\uu,x_0,0+)\ge \bar{\be}_n.\qedhere
\end{equation*}
\end{proof}

The following is the epiperimetric inequality from \cite{AndShaUraWei15}.
\begin{theorem}\label{epi}
There exists $\kappa\in (0,1)$ and $\delta>0$ such that if
$\mathbf{c}$ is a homogeneous
function of degree $2$ satisfying $\| \mathbf{c}-\h\|_{W^{1,2}(B_1;\R^m)}+\| \mathbf{c}-\h\|_{L^\infty(B_1;\R^m)}\le \delta$ for some $\h \in \Hf$,
then there is $\vv\in W^{1,2}(B_1;\R^m)$ such that $\vv=\mathbf{c}$ on $\partial B_1$ and
\begin{equation*}
M(\vv) \le (1-\kappa) M(\mathbf{c}) + \kappa \frac{\be_n}{2}.
\end{equation*}
\end{theorem}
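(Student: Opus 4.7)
The plan is a compactness-plus-blow-up contradiction argument. First I would normalize by rotation in $\R^n$ and in $\R^m$ so that $\h=\frac{(x_n)_+^2}{2}\mathbf{e}_1$. A polar-coordinate computation shows that on a $2$-homogeneous function $\mathbf{c}(r\theta)=r^2g(\theta)$,
\[
M(\mathbf{c})=\frac{1}{n+2}\int_{S^{n-1}}\bigl(|\nabla_\theta g|^2-2n|g|^2+2|g|\bigr)\,d\sigma,
\]
so $M$ restricted to $2$-homogeneous functions is a spherical functional whose minimum $\be_n/2$ is attained precisely on the traces of elements of $\Hf$. Suppose the theorem fails with $\kappa_j=\delta_j=1/j$. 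Then one finds $2$-homogeneous $\mathbf{c}_j$ and $\h_j\in\Hf$ (normalized to $\h$ after rotation) with $\epsilon_j:=\|\mathbf{c}_j-\h\|_{W^{1,2}}+\|\mathbf{c}_j-\h\|_{L^\infty}\to 0$, and such that every admissible competitor $\vv$ with $\vv=\mathbf{c}_j$ on $\partial B_1$ violates the epiperimetric bound. Setting $\mathbf{w}_j:=(\mathbf{c}_j-\h)/\epsilon_j$, I extract a weak $W^{1,2}$-limit $\mathbf{w}_0$, which is itself $2$-homogeneous and converges strongly in trace on $S^{n-1}$.

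I then build an explicit competitor $\vv_j=\h+\epsilon_j\widetilde{\mathbf{w}}_j$ adapted to the two phases of $\h$: in $\{x_n>0\}$, where $\h>0$ and the obstacle term $2|\vv|$ is smooth, $\widetilde{\mathbf{w}}_j$ is the harmonic replacement of $\mathbf{w}_j|_{\partial B_1}$; in $\{x_n<0\}$, where $\h\equiv0$, $\vv_j$ is chosen as the energy minimizer for $\int(|\nabla\vv|^2+2|\vv|)$ with the prescribed boundary values, which is a local solution of \eqref{eq:sol} in that half-ball. Expanding $M(\vv_j)-\be_n/2$ and $M(\mathbf{c}_j)-\be_n/2$ to second order in $\epsilon_j$ and using that $\h$ is a critical point of $M$, the desired inequality reduces (after dividing by $\epsilon_j^2$ and passing to the limit) to a comparison $\widetilde{Q}(\mathbf{w}_0)\le(1-2\kappa)\,Q(\mathbf{w}_0)$ of quadratic forms, where $Q$ and $\widetilde{Q}$ are the second variations of $M$ at $\h$ evaluated on the $2$-homogeneous extension and on the harmonic/solution extension, respectively, of the trace of $\mathbf{w}_0$.

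The core of the argument is then a spectral-gap estimate: $Q$ is nonnegative on $2$-homogeneous perturbations, its kernel coincides with the tangent space to $\Hf$ at $\h$ (infinitesimal rotations of $\nu$ together with rotations of the target vector $\mathbf{e}$), and on the orthogonal complement $Q$ strictly dominates $\widetilde{Q}$ by a definite factor. Kernel modes are harmless because they can be absorbed by replacing $\h$ with its nearest neighbor $\h_j\in\Hf$ at each scale. \textbf{The main difficulty} is precisely this spectral analysis: because the term $2|\vv|$ is not smooth on the set $\{\h=0\}$, the second variation carries a distributional contribution supported on the free-boundary equator $\{x_n=0\}\cap S^{n-1}$, and identifying the kernel of $Q$ exactly with the tangent space of $\Hf$ requires a careful Fourier decomposition in spherical harmonics on $S^{n-2}$ combined with an explicit eigenvalue-by-eigenvalue comparison, complicated further by the coupling between the $m$ target components of the vector-valued problem.
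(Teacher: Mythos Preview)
The paper does not prove this theorem at all: it is simply quoted from \cite{AndShaUraWei15} with the sentence ``The following is the epiperimetric inequality from \cite{AndShaUraWei15}'' and then used as a black box in Lemma~\ref{lem:Weiss-decay}. So there is no proof in the paper to compare your proposal against.

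That said, your outline is the correct strategy and is essentially the one carried out in \cite{AndShaUraWei15} (which in turn follows Weiss's original scheme): contradiction, normalization to a fixed half-space $\h$, linearization $\mathbf{w}_j=(\mathbf{c}_j-\h)/\epsilon_j$, construction of an explicit competitor by different extensions on the two phases of $\h$, and reduction to a spectral-gap statement for the second variation at $\h$. You have also correctly flagged the genuine difficulty, namely the singular contribution of $2|\vv|$ along $\{x_n=0\}$ and the identification of the kernel of the second variation with the tangent space of $\Hf$. One point to be careful about in an actual write-up: in the half-ball $\{x_n<0\}$ the competitor is of size $\epsilon_j$, so the term $2|\vv_j|$ contributes at order $\epsilon_j$ rather than $\epsilon_j^2$, and the limiting comparison on that side is not a pure quadratic-form inequality but involves a convex minimization at the linear level; this is handled in \cite{AndShaUraWei15} but your sketch glosses over the asymmetry between the two phases.
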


\begin{lemma}
\label{lem:Weiss-decay}
Let $\uu$ be an almost minimizer in $B_1$ and $x_0\in \mathcal{R}_\uu\cap B_{1/2}$. Suppose that the epiperimetric inequality holds with $\kappa\in(0,1)$ for every homogeous replacement $\cc_{x_0,r}$. Then $$
W(\uu,x_0,r)-W(\uu,x_0,0+)\le C(n,\al,E(\uu,1))r^\de,\quad 0<r<r_0=r_0(n,\al)
$$
for some $\de=\de(n,\al,\kappa)>0$.
\end{lemma}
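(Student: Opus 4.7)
The plan is to adapt the Weiss--Monneau-type decay argument from \cite{Wei99} and \cite{AndShaUraWei15} to the almost-minimizer setting, following the same template used for the thin obstacle problem in \cite{JeoPet21}. Write $W(r):=W(\uu,x_0,r)$, $W_0:=W(\uu,x_0,0+)=\be_n/2$, $\cc_r:=\cc_{x_0,r}$, $\uu_r:=\uu_{x_0,r}$, and set $f(r):=W(r)-W_0\ge 0$ (the nonnegativity is a consequence of Theorem~\ref{thm:weiss-mon}). First I would verify that Theorem~\ref{epi} applies to every $\cc_r$ with $r\in(0,r_0)$. By Lemma~\ref{lem:blowup-exist} and the classification recalled in the proof of Theorem~\ref{thm:Weiss}, every blow-up at the regular point $x_0$ is a half-space solution in $\Hf$. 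Combined with the uniform bound $\|\uu_r\|_{C^{1,\al/2}(B_1)}\le C$ from Corollary~\ref{UB}, this forces $\cc_r$ to converge to $\Hf$ in $W^{1,2}(B_1)\cap L^\infty(B_1)$ as $r\to 0^+$, so $\cc_r$ is within $\de$ of $\Hf$ for all small enough $r$.

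For each such $r$, let $\vv_r$ be the competitor produced by Theorem~\ref{epi}. Then $\vv_r=\cc_r=\uu_r$ on $\p B_1$ and $M(\vv_r)\le(1-\kappa)M(\cc_r)+\kappa W_0$. Rescaling $\vv_r$ back to $B_r(x_0)$ and using the almost minimality of $\uu$ in $B_r(x_0)$ yields $E(\uu_r,1)\le(1+r^\al)E(\vv_r,1)$; subtracting $2\int_{\p B_1}|\uu_r|^2=2\int_{\p B_1}|\vv_r|^2$ from both sides and bounding $E(\vv_r,1)$ uniformly via Corollary~\ref{UB} and Lemma~\ref{lem:quad-growth-L2}, I obtain the key single-scale inequality
$$
M(\uu_r)-W_0\le(1-\kappa)\bigl(M(\cc_r)-W_0\bigr)+Cr^\al .
$$

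The main step is then to promote this single-scale estimate to a decay estimate for $f(r)$. Using the identity $M(\uu_r)=e^{-ar^\al}W(r)-2br^\al\int_{\p B_1}|\uu_r|^2$ coming directly from the definition of $W$, and the Euler identity $\p_\nu\cc_r=2\cc_r$ on $\p B_1$ (which follows from 2-homogeneity of $\cc_r$), I would combine the monotonicity inequality of Theorem~\ref{thm:weiss-mon}
$$
\frac{d}{dr}W(r)\ge\frac{e^{ar^\al}}{r^{n+2}}\int_{\p B_r(x_0)}\Bigl|\p_\nu\uu-\frac{2(1-br^\al)}{r}\uu\Bigr|^2
$$
with a Pohozaev/integration-by-parts identity relating $M(\cc_r)-M(\uu_r)$ to the boundary integral $\int_{\p B_1}|\p_\nu\uu_r-2\uu_r|^2$ up to an $O(r^\al)$ error. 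Inserting these ingredients into the single-scale inequality produces a differential inequality of the form
$$
\frac{d}{dr}f(r)\ge\frac{c\kappa}{r}f(r)-Cr^{\al-1},\qquad 0<r<r_0,
$$
for a dimensional constant $c>0$. Integrating this ODE with the integrating factor $r^{-c\kappa}$ from $r$ to $r_0$, and using the uniform bound $f(r_0)\le C$ of Corollary~\ref{UB}, yields $f(r)\le Cr^\de$ with $\de=\min(c\kappa,\al)$.

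The main technical obstacle is the derivation of the differential inequality in the third step. While the template is standard for exact minimizers, here one must carefully track all the perturbation terms: the $e^{ar^\al}$ factor in $W(r)$, the $(1-br^\al)$ correction in the monotonicity formula, and--most importantly--the failure of the exact Euler--Lagrange equation, which ordinarily allows one to identify the "energy defect" $M(\cc_r)-M(\uu_r)$ with the boundary integral $\int_{\p B_1}|\p_\nu\uu_r-2\uu_r|^2$. Each such error is uniformly of order $r^\al$, thanks to the a priori estimates of Corollary~\ref{UB}, Lemma~\ref{lem:quad-growth-L2}, and the computations in the proof of Theorem~\ref{thm:weiss-mon}; hence they can all be absorbed into the $Cr^{\al-1}$ remainder of the ODE inequality without damaging the conclusion.
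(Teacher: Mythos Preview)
Your overall strategy---derive a differential inequality $f'(r)\ge\frac{c\kappa}{r}f(r)-Cr^{\al-1}$ from the epiperimetric inequality plus almost-minimality, then integrate---is exactly the paper's, and the final ODE step and the identification of the $O(r^\al)$ errors are correct. One clarification: the lemma \emph{assumes} as a hypothesis that the epiperimetric inequality applies to every $\cc_{x_0,r}$, so your first paragraph is unnecessary here (that verification is the content of Lemma~\ref{lem:rescaling-H-dist}).

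The one place where your description is slightly off is the derivation of the differential inequality. You propose to combine the lower bound of Theorem~\ref{thm:weiss-mon} with a ``Pohozaev identity relating $M(\cc_r)-M(\uu_r)$ to the boundary integral $\int_{\p B_1}|\p_\nu\uu_r-2\uu_r|^2$.'' But there is no such identity, and the inequality of Theorem~\ref{thm:weiss-mon} cannot be used directly: that inequality was itself obtained by comparing $\uu$ to the homogeneous replacement $\cc_r$, so the $M(\cc_r)$ information has already been spent. What the paper does (and what you effectively acknowledge in your last paragraph when you invoke ``the computations in the proof of Theorem~\ref{thm:weiss-mon}'') is to differentiate $W(r)$ directly, \emph{without} using almost-minimality, and obtain the pointwise identity
\[
e'(r)=-\tfrac{n+2}{r}(e(r)+W_0)+\tfrac{(1-br^\al)e^{ar^\al}(n+2)}{r}M(\cc_r)+\tfrac{1}{r}\int_{\p B_1}|\p_\nu\uu_r-2\uu_r|^2+O(r^{\al-1}).
\]
One then \emph{discards} the positive boundary integral and bounds $M(\cc_r)$ from below using epiperimetric together with the almost-minimality comparison $E(\uu_r,1)\le(1+r^\al)E(\vv_r,1)$. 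This yields $M(\cc_r)\ge\frac{1}{1-\kappa}\bigl(\frac{e^{-ar^\al}}{1+r^\al}W(r)-\kappa W_0\bigr)+O(r^\al)$, and substituting back produces exactly your target inequality with $c=\frac{n+2}{1-\kappa}$. So the route is the same as yours once you replace ``use Theorem~\ref{thm:weiss-mon}'s inequality'' by ``redo the $W'$ computation from its proof and keep the $M(\cc_r)$ term explicit.''
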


\begin{proof}
For simplicity, we assume $x_0=0$, and write $W(r)=W(\uu,0,r)$ and $W(0+)=W(\uu,0,0+)$. We define \begin{align*}
    e(r)&:=W(r)-W(0+)\\
    &=\frac{e^{ar^\al}}{r^{n+2}}\int_{B_r}\left(|\D\uu|^2+2|\uu|\right)-\frac{2(1-br^\al)e^{ar^\al}}{r^{n+3}}\int_{\p B_r}|\uu|^2-W(0+).
\end{align*} 
Then, from the identities $$
\frac d{dr}\left(\frac{e^{ar^\al}}{r^{n+2}}\right)=-\frac{(n+2)(1-r^\al)}r\cdot\frac{e^{ar^\al}}{r^{n+2}}$$ and $$\frac d{dr}\left(\frac{2(1-br^\al)e^{ar^\al}}{r^{n+3}}\right)=\frac{-2(1-br^\al)e^{ar^\al}(n+3+O(r^\al))}{r^{n+4}},
$$
we have \begin{align*}
    e'(r)&=-\frac{(n+2)(1-r^\al)}{r}\frac{e^{ar^\al}}{r^{n+2}}\int_{B_r}\left(|\D\uu|^2+2|\uu|\right)+\frac{e^{ar^\al}}{r^{n+2}}\int_{\p B_r}\left(|\D\uu|^2+2|\uu|\right)\\
    &\qquad+\frac{2(1-br^\al)e^{ar^\al}(n+3+O(r^\al))}{r^{n+4}}\int_{\p B_r}|\uu|^2\\
    &\qquad-\frac{2(1-br^\al)e^{ar^\al}}{r^{n+3}}\left(\int_{\p B_r}2\uu\p_\nu\uu+\frac{n-1}r\int_{\p B_r}|\uu|^2\right)\\
    &\ge -\frac{n+2}r\left(e(r)+\frac{2(1-br^\al)e^{ar^\al}}{r^{n+3}}\int_{\p B_r}|\uu|^2+W(0+)\right)\\
    &\qquad\begin{multlined}+\frac{e^{ar^\al}}r\bigg[\frac{1}{r^{n+1}}\int_{\p B_r}\left(|\D\uu|^2+2|\uu|\right)+\frac{8(1-br^\alpha)+O(r^\al)}{r^{n+3}}\int_{\p B_r}|\uu|^2\\
     -\frac{2(1-br^\al)}{r^{n+2}}\int_{\p B_r}2\uu\p_\nu\uu\bigg]\end{multlined}\\
    &\ge -\frac{n+2}r\left(e(r)+W(0+)\right)\\
    &\qquad\begin{multlined}+\frac{(1-br^\al)e^{ar^\al}}{r}\bigg[\frac1{r^{n+1}}\int_{\p B_r}\left(|\D\uu|^2+2|\uu|\right)\\
    +\frac{-2n+4+O(r^\al)}{r^{n+3}}\int_{\p B_r}|\uu|^2-\frac4{r^{n+2}}\int_{\p B_r}\uu\p_\nu\uu\bigg].\end{multlined}
\end{align*}
Combining this with the computation \begin{align*}
    &\int_{\p B_r}\left(\frac1{r^{n+1}}\left(|\D\uu|^2+2|\uu|\right)+\frac{-2n+4+O(r^\al)}{r^{n+3}}|\uu|^2-\frac4{r^{n+2}}\uu\p_\nu\uu\right)\\
    &=\int_{\p B_1}\left(|\D\uu_r|^2+2|\uu_r|+(-2n+4+O(r^\al))|\uu_r|^2-4\uu_r\p_\nu\uu_r\right)\\
    &=\int_{\p B_1}\left(|\p_\nu\uu_r-2\uu_r|^2+|\p_\theta\uu_r|^2+2|\uu_r|-(2n+O(r^\al))|\uu_r|^2\right)\\
    &\ge \int_{\p B_1}\left(|\p_\theta\cc_r|^2+2|\cc_r|-(2n+O(r^\al))|\cc_r|^2\right)\\
    &=\int_{\p B_1}\left(|\D\cc_r|^2+2|\cc_r|-(2n+4+O(r^\al))|\cc_r|^2\right)\\
    &=(n+2)\left[\int_{B_1}\left(|\D\cc_r|^2+2|\cc_r|\right)-(2+O(r^\al))\int_{\p B_1}|\cc_r|^2\right]\\
    &=(n+2)M(\cc_r)+O(r^\al)\int_{\p B_1}|\uu_r|^2,
\end{align*}
we get $$
e'(r)\ge-\frac{n+2}r(e(r)+W(0+))+\frac{(1-br^\al)e^{ar^\al}(n+2)}rM(\cc_r)+\frac{O(r^\al)}r\int_{\p B_1}|\uu_r|^2.
$$
To estimate $M(\cc_r)$, we use the almost minimizing property of $\uu$ and that the epiperimetric inequality $M(\vv)\le (1-\kappa)M(\cc_r)+\kappa\frac{\be_n}2$ holds for some $\vv\in W^{1,2}(B_1;\R^m)$ with $\vv=\cc_r=\uu_r$ on $\p B_1$ to obtain \begin{align*}
    (1-\kappa)M(\cc_r)+\kappa W(0+)&\ge M(\vv)\\
    &=\int_{B_1}\left(|\D\vv|^2+2|\vv|\right)-2\int_{\p B_1}|\vv|^2\\
    &\ge \frac1{1+r^\al}\int_{B_1}\left(|\D\uu_r|^2+2|\uu_r|\right)-2\int_{\p B_1}|\uu_r|^2\\
    &=\frac{e^{-ar^\al}}{1+r^\al}W(r)+O(r^\al)\int_{\p B_1}|\uu_r|^2.
\end{align*}
This gives $$
M(\cc_r)\ge \frac{\frac{e^{-ar^\al}}{1+r^\al}W(r)-\kappa W(0+)}{1-\kappa}+O(r^\al)\int_{\p B_1}|\uu_r|^2.
$$
Thus \begin{align*}
    e'(r)&\ge -\frac{n+2}r(e(r)+W(0+))+\frac{(1-br^\al)e^{ar^\al}(n+2)}r\left(\frac{\frac{e^{-ar^\al}}{1+r^\al}W(r)-\kappa W(0+)}{1-\kappa}\right)\\
    &\qquad+\frac{O(r^\al)}r\int_{\p B_1}|\uu_r|^2.
\end{align*}
Note that from Lemmas~\ref{lem:quad-growth-L1} and ~\ref{lem:quad-growth-L2}, we have $$
W(0+)\le W(r)\le C(n,\al,E(\uu,1))
$$
and $$
\int_{\p B_1}|\uu_r|^2=\frac1{r^{n+3}}\int_{\p B_r}|\uu|^2\le C(n,\al,E(\uu,1))
$$
for $0<r<r_0(n,\al)$. Using these estimates, we obtain \begin{align*}
    e'(r)&\ge -\frac{n+2}r(e(r)+W(0+))+\frac{n+2}r\left(\frac{W(r)-\kappa W(0+)}{1-\kappa}\right)\\
    &\qquad+\frac{O(r^\al)}r\left(W(r)+W(0+)+\int_{\p B_1}|\uu_r|^2\right)\\
    &\ge -\frac{n+2}re(r)+\frac{n+2}r\left(\frac{W(r)-W(0+)}{1-\kappa}\right)-C_0r^{\al-1}\\
    &=\left(\frac{(n+2)\kappa}{1-\kappa}\right)\frac{e(r)}r-C_0r^{\al-1},\quad 0<r<r_0(n,\al),
\end{align*}
for some $C_0=C_0(n,\al,E(\uu,1))$. Now, we take $\de=\de(n,\al,\kappa)$ such that $0<\de<\min\{\frac{(n+2)\kappa}{1-\kappa},\al\}$. Then, using the above differential inequality for $e(r)$ and that $e(r)=W(r)-W(0+)\ge 0$, we obtain \begin{align*}
    \frac d{dr}\left[e(r)r^{-\de}+\frac{C_0}{\al-\de}r^{\al-\de}\right]
    &=r^{-\de}\left[e'(r)-\frac\de re(r)\right]+C_0r^{\al-\de-1}\\
    &\ge r^{-\de}\left[\left(\frac{(n+2)\kappa}{1-\kappa}-\de\right)\frac{e(r)}r-C_0r^{\al-1}\right]+C_0r^{\al-\de-1}\\
    &\ge 0.
\end{align*}
Thus $$
e(r)r^{-\de}\le e(r)r^{-\de}+\frac{C_0}{\al-\de}r^{\al-\de}\le e(r_0)r_0^{-\de}+\frac{C_0}{\al-\de}r_0^{\al-\de},
$$
and hence we conclude that \begin{equation*}
W(r)-W(0+)=e(r)\le C(n,\al,E(\uu,1))r^\de.\qedhere
\end{equation*}
\end{proof}

Now, we consider an auxiliary
function
$$
\phi(r):=e^{-(2 b/\alpha)r^\alpha}r^{2},\quad
r>0,
$$
which is a solution of the differential equation
$$
\phi'(r)=2\,\phi(r)\frac{1-b r^\alpha}r,\quad r>0.
$$
For $x_0\in B_{1/2}$, we define the \emph{$2$-almost homogeneous rescalings} by
$$
\uu_{x_0, r}^{\phi}(x):=\frac{\uu(rx+x_0)}{\phi(r)},\quad x\in B_{1/(2r)}.
$$

\begin{lemma}
[Rotation estimate]\label{lem:rot-est}
Under the same assumptions as in Lemma~\ref{lem:Weiss-decay},
$$
\int_{\p B_1}|\uu_{x_0,t}^\phi-\uu_{x_0,s}^\phi|\le C(n,\al,\kappa,E(\uu,1))t^{\de/2},\quad s<t<t_0(n,\al).
$$
\end{lemma}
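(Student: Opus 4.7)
The plan is to exploit the fact that the weight $\phi$ is defined precisely so that the $r$-derivative of $\uu_{x_0,r}^\phi$, evaluated on $\p B_1$, produces the integrand $\p_\nu\uu - \tfrac{2(1-br^\al)}{r}\uu$ whose $L^2$-norm over $\p B_r(x_0)$ is controlled by $\tfrac{d}{dr}W(\uu,x_0,r)$ via Theorem~\ref{thm:weiss-mon}. Combined with the polynomial Weiss decay of Lemma~\ref{lem:Weiss-decay}, two successive applications of Cauchy--Schwarz will deliver the rotation estimate.

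Assuming without loss of generality that $x_0=0$ and using the defining ODE $\phi'(r)=2\phi(r)(1-br^\al)/r$, a direct computation shows that for $x\in\p B_1$ (so that $x$ is the outward unit normal of $\p B_r$ at the point $rx$),
$$
\frac{d}{dr}\uu_{0,r}^\phi(x) = \frac{1}{\phi(r)}\left(\p_\nu\uu(rx) - \frac{2(1-br^\al)}{r}\uu(rx)\right).
$$
Thus, bounding $\int_{\p B_1}|\uu_{0,t}^\phi-\uu_{0,s}^\phi|$ by $\int_s^t\!\int_{\p B_1}\bigl|\tfrac{d}{dr}\uu_{0,r}^\phi\bigr|\,dr$, changing variables $y=rx$ in the inner integral, applying Cauchy--Schwarz on $\p B_r$, invoking Theorem~\ref{thm:weiss-mon}, and using $\phi(r)\ge c r^2$ for small $r$ reduces the problem to showing
$$
\int_s^t r^{-1/2}\bigl(W'(\uu,0,r)\bigr)^{1/2}\,dr \;\le\; C\, t^{\de/2}.
$$

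For this final step, a Cauchy--Schwarz in $r$ with auxiliary exponent $\e\in(0,\de)$ splits the integral into the product of $\bigl(\int_s^t r^{\e-1}\,dr\bigr)^{1/2}=O(t^{\e/2})$ and $\bigl(\int_s^t r^{-\e}W'(r)\,dr\bigr)^{1/2}$. Integrating the latter by parts produces the boundary contribution $[r^{-\e}(W(r)-W(0+))]_s^t$ together with $\e\int_s^t r^{-\e-1}(W(r)-W(0+))\,dr$, each of which is $O(t^{\de-\e})$ by Lemma~\ref{lem:Weiss-decay}. The balanced choice $\e=\de/2$ then yields the desired $O(t^{\de/2})$. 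The main technical point is exactly this balance: one needs $\e<\de$ so that the Weiss decay absorbs the integration-by-parts terms, and $\e>0$ to tame the singular weight $r^{-1/2}$; the symmetric choice $\e=\de/2$ is forced and produces precisely the exponent claimed. All other steps are bookkeeping, since the tailored ODE satisfied by $\phi$ makes the differentiation identity above free of lower-order error terms, requiring no delicate cancellations.
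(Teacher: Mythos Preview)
Your argument is correct. The reduction to bounding $\int_s^t r^{-1/2}\bigl(W'(\uu,0,r)\bigr)^{1/2}\,dr$ is identical to the paper's, via the ODE for $\phi$ and Theorem~\ref{thm:weiss-mon}; the difference lies only in how that integral is estimated. The paper applies Cauchy--Schwarz with the crude weight $r^{-1}$, obtaining
\[
\int_s^t r^{-1/2}\bigl(W'(r)\bigr)^{1/2}\,dr \le \Bigl(\log\tfrac{t}{s}\Bigr)^{1/2}\bigl(W(t)-W(s)\bigr)^{1/2}\le C\Bigl(\log\tfrac{t}{s}\Bigr)^{1/2}t^{\de/2},
\]
and then invokes a standard dyadic summation to remove the logarithm. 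Your weighted Cauchy--Schwarz with exponent $\e\in(0,\de)$ followed by integration by parts against $e(r)=W(r)-W(0+)$ is a cleaner alternative: it bypasses the dyadic step entirely and delivers $O(t^{\de/2})$ in one stroke. One small remark: the exponent $\e=\de/2$ is not actually forced, since $t^{\e/2}\cdot t^{(\de-\e)/2}=t^{\de/2}$ for any $\e\in(0,\de)$; the constraint $0<\e<\de$ is what matters, and any such choice works.
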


\begin{proof}
Without loss of generality, assume $x_0=0$. Then, for $\uu_r^\phi=\uu_{0,r}^\phi$,
  \begin{align*}
    \frac{d}{dr}\uu_{r}^{\phi}(x)
    &=\frac{\nabla \uu(rx)\cdot x}{\phi(r)}-\frac{\uu(rx)[\phi'(r)/\phi(r)]}{\phi(r)}\\
    &=\frac{1}{\phi(r)}\left(\nabla \uu(rx)\cdot
      x-\frac{2(1-br^\alpha)}{r}\uu(rx)\right).
  \end{align*}
Thus, for $0<r<t_0=t_0(n,\alpha)$, using Theorem \ref{thm:weiss-mon},
\begin{align*}
  &\left(\int_{\partial
    B_1}\left[\frac{d}{dr}\uu_r^\phi(\xi)\right]^2dS_\xi\right)^{1/2}\\
  &=
    \frac{1}{\phi(r)}\left(\int_{\partial B_1}\left|\nabla \uu(r\xi)\cdot \xi
    -\frac{2(1-br^\alpha)}{r}\uu(r \xi)\right|^2  dS_\xi\right)^{1/2}\\
  &=\frac{1}{\phi(r)}\left(\frac{1}{r^{n-1}}\int_{\partial
    B_r}\left|\D \uu(x)\cdot \nu-\frac{2(1-br^\alpha)}{r}
    \uu(x)\right|^2dS_x\right)^{1/2}\\
  &\leq\frac{1}{\phi(r)}\left(\frac{1}{r^{n-1}}\frac{r^{n+2}}{e^{a
    r^\alpha}}\frac{d}{dr}W(\uu,r)\right)^{1/2}=\frac{e^{c
    r^\alpha}}{r^{1/2}}\left(\frac{d}{dr}W(\uu,r)\right)^{1/2}
    ,\quad c=\frac{2b}{\alpha}-\frac{a}{2}.
\end{align*}
Using this and Lemma~\ref{lem:Weiss-decay}, we can compute \begin{align*}
    \int_{\p B_1}|\uu_{t}^\phi-\uu_{s}^\phi|&\le \int_{\p B_1}\int_s^t\left|\frac{d}{dr}\uu_{r}^\phi\right|\,dr=\int_s^t\int_{\p B_1}\left|\frac{d}{dr}\uu_{r}^\phi\right|\,dr\\
    &\le C_n\int_s^t\left(\int_{\p B_1}\left|\frac{d}{dr}\uu_{r}^\phi\right|^2\right)^{1/2}\,dr\\
    &\le C_n\left(\int_s^t\frac1r\,dr\right)^{1/2}\left(\int_s^tr\int_{\p B_1}\left|\frac{d}{dr}\uu_{r}^\phi\right|^2\,dr\right)^{1/2}\\
    &\le C_ne^{ct^\al}\left(\log\frac ts\right)^{1/2}\left(\int_s^t\frac{d}{dr}W(\uu,r)\,dr\right)^{1/2}\\
    &\le C\left(\log\frac ts\right)^{1/2}(W(\uu,t)-W(\uu,s))^{1/2}\\
    &\le C\left(\log\frac ts\right)^{1/2}t^{\de/2},\quad 0<t<t_0(n,\al).
\end{align*}
Now, by a standard dyadic argument, we conclude that \begin{equation*}
\int_{\p B_1}|\uu_{t}^\phi-\uu_{s}^\phi|\le Ct^{\de/2}.\qedhere
\end{equation*}
\end{proof}

%%%%%%%%%%%%%%%%%%%%%%%%%%%%%%%%%%%%%%%%%%%%%%%%%%

\section{Regularity of the regular set}
In this last section we prove one of the most important results in this paper, the $C^{1,\gamma}$ regularity of the regular set.

\begin{lemma}\label{lem:rescaling-H-dist}
Let $C_h$ be a compact subset of $\mathcal{R}_\uu$.  Then for small $r$, the $2$-homogeneous replacements $\cc_{x_0,r}$ are uniformly (in $x_0\in C_h$) close to $\Hf$ in the $C^1(\overline{B_1};\R^m)$-topology.
\end{lemma}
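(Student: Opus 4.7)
My plan is a contradiction-compactness argument. Suppose the conclusion fails, so that there exist $\de_0>0$, points $x_j\in C_h$, and radii $r_j\searrow 0$ with $\dist_{C^1(\overline{B_1})}(\cc_{x_j,r_j},\Hf)\ge\de_0$. By compactness of $C_h$, pass to a subsequence so that $x_j\to x_\infty\in C_h\subseteq\cR_\uu$. From Corollary~\ref{UB}, iterated via the scaling $\uu_{x_0,Rr}(x)=R^{-2}\uu_{x_0,r}(Rx)$ to pass from bounds on $B_1$ to bounds on $B_R$, the rescalings $\uu_{x_j,r_j}$ are bounded in $C^{1,\al/2}_{\loc}(\R^n;\R^m)$. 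Arzel\`a--Ascoli plus a Cantor diagonal argument then yield a further subsequence with $\uu_{x_j,r_j}\to\w$ in $C^1_{\loc}(\R^n;\R^m)$. I will show that $\w\in\Hf$; since $\w$ is then 2-homogeneous and $\cc_{x_j,r_j}$ is the 2-homogeneous extension of $\uu_{x_j,r_j}|_{\p B_1}$, the $C^1(\p B_1)$-convergence of the boundary traces upgrades to $\cc_{x_j,r_j}\to\w$ in $C^1(\overline{B_1})$, contradicting the standing assumption.

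To show $\w\in\Hf$ I verify that $\w$ is a 2-homogeneous global minimizer with $M(\w|_{B_1})=\be_n/2$. The key identity is the rescaled form of the Weiss energy
\[
W(\uu,x_j,\tau)=e^{a\tau^\al}\Bigl[\int_{B_1}\bigl(|\D\uu_{x_j,\tau}|^2+2|\uu_{x_j,\tau}|\bigr)-2(1-b\tau^\al)\int_{\p B_1}|\uu_{x_j,\tau}|^2\Bigr],
\]
applied at $\tau=tr_j$ for each fixed $t>0$. Using $\uu_{x_j,tr_j}(x)=t^{-2}\uu_{x_j,r_j}(tx)$ together with $C^1_{\loc}$-convergence, the right side tends to the value of $M$ on the $C^1$-limit. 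On the other hand, monotonicity of $W(\uu,x_j,\cdot)$ (Theorem~\ref{thm:weiss-mon}), continuity of $W(\uu,\cdot,t)$ in the base point at fixed positive scale (which uses $\uu\in C^{1,\al/2}$), and $x_\infty\in\cR_\uu$ combine to give the sandwich
\[
\tfrac{\be_n}{2}=W(\uu,x_j,0+)\le W(\uu,x_j,tr_j)\le W(\uu,x_j,t)\xrightarrow[j\to\infty]{}W(\uu,x_\infty,t)\xrightarrow[t\to 0+]{}\tfrac{\be_n}{2},
\]
so $\lim_j W(\uu,x_j,tr_j)=\be_n/2$ for every $t>0$. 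In particular $M(\w|_{B_1})=\be_n/2$. Minimality of $\w$ on any fixed ball follows from passing to the limit in the almost-minimizing inequality for $\uu_{x_j,r_j}$, whose gauge $(r_j\rho)^\al$ vanishes (as in the proof of Lemma~\ref{lem:blowup-exist}).

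To deduce 2-homogeneity, I integrate the derivative estimate of Theorem~\ref{thm:weiss-mon} from $sr_j$ to $tr_j$ for any $0<s<t$; after the change of variables $\rho=r_j\si$, the lower bound rescales (up to the factor $e^{a(r_j\si)^\al}$) to $\int_s^t\si^{-n-2}\int_{\p B_\si}\bigl|\p_\nu\uu_{x_j,r_j}-2(1-b(r_j\si)^\al)\si^{-1}\uu_{x_j,r_j}\bigr|^2\,dS\,d\si$, which by $C^1_{\loc}$-convergence tends to $\int_s^t\si^{-n-2}\int_{\p B_\si}|\p_\nu\w-2\w/\si|^2\,dS\,d\si$. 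The left side $W(\uu,x_j,tr_j)-W(\uu,x_j,sr_j)$ tends to $0$ by the previous step, so $\w$ is 2-homogeneous on every annulus $B_t\setminus B_s$, hence on all of $\R^n$. Combined with the minimizing property and the mass identity, $\w$ is a 2-homogeneous global minimizer with $M(\w)=\be_n/2$, and the classification in Corollary~1 of \cite{AndShaUraWei15} (as invoked in Theorem~\ref{thm:Weiss}) gives $\w\in\Hf$, closing the contradiction.

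The main obstacle is the joint limit in base point and scale: one needs $\lim_j W(\uu,x_j,tr_j)=\be_n/2$ for every $t>0$, which requires uniform monotonicity in $t$ together with spatial continuity of $W(\uu,\cdot,t)$ at fixed positive scale---both available thanks to the $C^{1,\al/2}$-regularity of $\uu$ from Theorem~\ref{thm:grad-u-holder} and the assumption $x_\infty\in\cR_\uu$. Everything else is a standard blowup analysis in the spirit of Lemma~\ref{lem:blowup-exist}, transplanted to varying base points via Arzel\`a--Ascoli.
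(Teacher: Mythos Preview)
Your proof is correct and follows essentially the same contradiction--compactness strategy as the paper: blow up along a putative bad sequence, identify the limit as a nonzero $2$-homogeneous global minimizer with Weiss energy $\be_n/2$, and invoke the classification from \cite{AndShaUraWei15} to place it in $\Hf$. The only cosmetic differences are that the paper uses Dini's theorem on $C_h$ (uniform convergence of $W(\uu,\cdot,r)$ to $\be_n/2$) where you use a direct sandwich via continuity of $W(\uu,\cdot,t)$ in the base point, and the paper first proves closeness of $\uu_{x_0,r}$ to $\Hf$ and then transfers this to $\cc_{x_0,r}$, whereas you combine these two steps.
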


\begin{proof}
We first claim that $\uu_{x_0,r}$ is uniformly close to $\Hf$ in $C^1(\overline{B_1};\R^m)$. Indeed, assume towards a contradiction that there exist $x_j\in C_h$, $r_j\to 0$ and $\e_0>0$ such that \begin{align}
    \label{eq:rescaling-H-dist}
    \dist(\uu_{x_j,r_j},\Hf)\ge \e_0,
\end{align}
where the distance is measured in the $C^1(\overline{B_1};\R^m)$-norm. Following the argument in the proof of Lemma~\ref{lem:blowup-exist} with $\uu_{x_j,r_j}$ in the place of $\uu_{t_j}$, we obtain that up to a subsequence, $$
\uu_{x_j,r_j}\to\w\quad\text{in }C^1_{\loc}(\R^n;\R^m)
$$
for some nonzero $2$-homogeneous global solution $\w\in C^1_{\loc}(\R^n;\R^m)$. Then, by Corollary 1 in \cite{AndShaUraWei15}, we have either $M(\w)=\be_n/2$ or $M(\w)=\bar{\be}_n$. Moreover, by Dini's theorem, there is $r_0>0$ such that $$
W(\uu,x_j,r)<1/2(\be_n/2+\bar{\be}_n)
$$
for all $0<r<r_0$ and all $x_j\in C_h$. Thus $$
W(\w,0,1)=\lim_{j\to\infty}W(\uu_{x_j,r_j},0,1)=\lim_{j\to\infty}W(\uu,x_j,r_j)=\be_n/2,
$$
and hence $\w\in\Hf$ by Corollary 1 in \cite{AndShaUraWei15}. This contradicts \eqref{eq:rescaling-H-dist}, and we conclude that $\uu_{x_0,r}$ is uniformly close to $\Hf$ in $C^1(\overline{B_1};\R^m)$.\\
Now, to show that $\cc_{x_0,r}$ is close to $\Hf$, let $\e>0$ be given. Then there is $r_\e>0$ such that if $x_0\in C_h$ and $0<r<r_\e$, then $\|\uu_{x_0,r}-\h_{x_0,r}\|_{C^1(\overline{B_1};\R^m)}<\e$ for some $\h_{x_0,r}\in\Hf$. Since $\cc_{x_0,r}=|x|^2\uu_{x_0,r}\left(\frac{x}{|x|}\right)$ is $2$-homogeneous $C^{1,\al/2}$-regular, we have for some universal constant $C>0$ \begin{align*}
  &\|\cc_{x_0,r}-\h_{x_0,r}\|_{C^1(\overline{B_1};\R^m)}\\
  &\qquad\le C\left(\|\cc_{x_0,r}-\h_{x_0,r}\|_{L^\infty(\p B_1;\R^m)}+\|\D_\theta(\cc_{x_0,r}-\h_{x_0,r})\|_{L^\infty(\p B_1;\R^m)}\right). 
\end{align*}
Moreover, from the fact that $\cc_{x_0,r}$ coincides with $\uu_{x_0,r}$ on $\p B_1$, we conclude that \begin{equation*}
\|\cc_{x_0,r}-\h_{x_0,r}\|_{C^1(\overline{B_1};\R^m)}=C\|\uu_{x_0,r}-\h_{x_0,r}\|_{C^1(\overline{B_1};\R^m)}<C\e,
\end{equation*}
as desired.
\end{proof}

\begin{lemma}\label{lem:rescaling-blowup-est}
Let $\uu$ be an almost minimizer in $B_1$, $C_h$ a compact subset of $\mathcal{R}_\uu$, and $\de$ as in Lemma~\ref{lem:Weiss-decay}. Then for every $x_0\in C_h$ there is a unique blowup $\uu_{x_0,0}\in\Hf$. Moreover, there exists $r_0>0$ and $C>0$ such that $$
\int_{\p B_1}|\uu^\phi_{x_0,r}-\uu_{x_0,0}|\le Cr^{\de/2}
$$
for all $0<r<r_0$ and $x_0\in C_h$.
\end{lemma}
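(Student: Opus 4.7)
The plan is to combine Lemma~\ref{lem:rescaling-H-dist}, Lemma~\ref{lem:Weiss-decay}, and Lemma~\ref{lem:rot-est} to extract a Cauchy estimate for the $2$-almost homogeneous rescalings and identify the limit with an element of $\Hf$. First, by Lemma~\ref{lem:rescaling-H-dist} there exists $r_1=r_1(C_h)>0$ such that for every $x_0\in C_h$ and every $0<r<r_1$, the $2$-homogeneous replacement $\cc_{x_0,r}$ lies within $\de$ of $\Hf$ in the topology required by Theorem~\ref{epi}. Thus the epiperimetric inequality applies uniformly on $C_h$ with a single $\kappa$, so the hypotheses of Lemma~\ref{lem:Weiss-decay} and hence Lemma~\ref{lem:rot-est} are satisfied uniformly in $x_0$, yielding the rotation estimate
$$
\int_{\p B_1} |\uu^\phi_{x_0,t} - \uu^\phi_{x_0,s}| \le C t^{\de/2}, \qquad s<t<t_0,
$$
with $C=C(n,\al,\kappa,E(\uu,1))$ independent of $x_0\in C_h$.

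The Cauchy property above shows that $\uu^\phi_{x_0,r}$ converges in $L^1(\p B_1;\R^m)$ as $r\to 0+$. On the other hand, Lemma~\ref{lem:blowup-exist} provides a sequence $t_j\to 0$ and a nonzero $2$-homogeneous global solution $\uu_{x_0,0}$ with $\uu_{x_0,t_j}\to \uu_{x_0,0}$ in $C^1_{\loc}(\R^n;\R^m)$. Since $\phi(r)/r^2 = e^{-(2b/\al)r^\al}\to 1$ as $r\to 0$, the same convergence holds for $\uu^\phi_{x_0,t_j}$, and therefore its $L^1$-limit on $\p B_1$ must coincide with $\uu_{x_0,0}$. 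This identifies the limit and, in particular, shows it is independent of the chosen subsequence, giving uniqueness of the blowup. Because $x_0\in \mathcal{R}_\uu$, one has $M(\uu_{x_0,0}) = W(\uu,x_0,0+) = \be_n/2$, so Corollary~1 in~\cite{AndShaUraWei15} classifies $\uu_{x_0,0}$ as a member of $\Hf$.

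Finally, to obtain the quantitative rate I would fix $0<r<t_0$ and pass to the limit $s=t_j\to 0+$ in the rotation estimate, using Fatou's lemma together with the $L^1$-convergence just established, to get
$$
\int_{\p B_1} |\uu^\phi_{x_0,r} - \uu_{x_0,0}| \le \liminf_{j\to\infty}\int_{\p B_1} |\uu^\phi_{x_0,r} - \uu^\phi_{x_0,t_j}| \le C r^{\de/2},
$$
for all $0<r<r_0 := \min\{r_1,t_0\}$, with $C$ uniform on $C_h$. The only delicate point I anticipate is making sure all constants are uniform in $x_0\in C_h$; this rests on Lemma~\ref{lem:rescaling-H-dist} (uniform closeness of $\cc_{x_0,r}$ to $\Hf$), which is what allows the epiperimetric inequality to be invoked with a single $\kappa$, and on the fact that the constants in Lemmas~\ref{lem:Weiss-decay} and~\ref{lem:rot-est} depend only on $n,\al,\kappa$ and $E(\uu,1)$.
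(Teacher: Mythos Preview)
Your proposal is correct and follows essentially the same route as the paper: invoke Lemma~\ref{lem:rescaling-H-dist} to make the epiperimetric inequality available uniformly on $C_h$, feed this into Lemmas~\ref{lem:Weiss-decay} and~\ref{lem:rot-est} to obtain the rotation estimate, and then pass to the limit $s\to 0$ along a blowup sequence (using $\phi(r)/r^2\to 1$) to identify the limit and prove uniqueness. The paper phrases uniqueness by comparing two blowups via $\int_{\p B_1}|\tilde\uu_{x_0,0}-\uu_{x_0,0}|=0$ and homogeneity, while you deduce it from the Cauchy property; these are equivalent, and your explicit attention to uniformity in $x_0$ is a welcome clarification.
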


\begin{proof}
By the definition of $\mathcal{R}_\uu$ and Corollary 1 in \cite{AndShaUraWei15}, every blowup $\uu_{x_0,0}$ at $x_0$ is in $\Hf$. Moreover, by Lemma~\ref{lem:rot-est} and Lemma~\ref{lem:rescaling-H-dist}, $$
\int_{\p B_1}|\uu_{x_0,r}^\phi-\uu_{x_0,s}^\phi|\le Cr^{\de/2},\quad s<r<r_0.
$$
Note that $\uu_{x_0,t}^\phi(x)=\frac{\uu(tx+x_0)}{\phi(t)}$ and $\lim_{t\to 0}\frac{\phi(t)}{t^2}=1$. Thus, if $\uu_{x_0,0}$ is the limit of $\uu_{x_0,t_j}$ for $t_j\to 0$, then it is also the limit of $\uu_{x_0,t_j}^\phi$. Taking $s=t_j\to 0$ in the above estimate and passing to the limit, we get $$
\int_{\p B_1}|\uu^\phi_{x_0,r}-\uu_{x_0,0}|\le Cr^{\de/2}.
$$
Finally, to prove the uniqueness of blowup, let $\tilde{\uu}_{x_0,0}$ be another blowup. Then,$$
\int_{\p B_1}|\tilde{\uu}_{x_0,0}-\uu_{x_0,0}|=0.
$$
By the homogeneity, we conclude that $\uu_{x_0,0}=\tilde{\uu}_{x_0,0}$ in $\R^n$.
\end{proof}

Now we are ready to prove the main result on the regularity of the regular set.

\begin{proof}[Proof of Theorem~\ref{thm:reg-set}]
The relative openness of the regular set immediately follows from the fact that $x\longmapsto W(\uu,x,0+)$ is upper-semicontinous and \eqref{eq:Weiss-limit-classify}. For the regularity of $\mathcal{R}_\uu$, we follow the argument in Theorem~5 in \cite{AndShaUraWei15}.

\medskip\noindent\emph{Step 1.} Let $x_0\in\mathcal{R}_\uu$. By Lemma~\ref{lem:rescaling-blowup-est}, there exists $\rho_0>0$ such that $B_{2\rho_0}(x_0)\subset B_1$, $B_{2\rho_0}(x_0)\cap\Gamma(\uu)=B_{2\rho_0}(x_0)\cap\mathcal{R}_\uu$ and $$
\int_{\p B_1}|\uu_{x_1,r}^\phi-\frac12\mathbf{e}(x_1)\max(x\cdot\nu(x_1),0)^2|\le Cr^{\de/2},
$$
for any $x_1\in \Gamma(\uu)\cap \overline{B_{\rho_0}(x_0)}$ and for any $0<r<\rho_0$. We then claim that $x_1\longmapsto\nu(x_1)$ and $x_1\longmapsto\mathbf{e}(x_1)$ are H\"older continuous of order $\gamma$ on $\Gamma(\uu)\cap\overline{B_{\rho_1}(x_0)}$ for some $\gamma=\gamma(n,\al,\kappa)>0$ and $\rho_1\in(0,\rho_0)$. Indeed, we observe that for $x_1$ and $x_2$ near $x_0$ and for small $r>0$,
\begin{align*}
    &\frac12\int_{\p B_1}|\mathbf{e}(x_1)\max(x\cdot\nu(x_1),0)^2-\mathbf{e}(x_2)\max(x\cdot\nu(x_2),0)^2|\,dS_x\\
    &\qquad\le 2Cr^{\de/2}+\int_{\p B_1}|\uu_{x_1,r}^\phi-\uu_{x_2,r}^\phi|\\
    &\qquad\le 2Cr^{\de/2}+\frac1{\phi(r)}\int_{\p B_1}\int_0^1|\D\uu(rx+(1-t)x_1+tx_2)||x_1-x_2|\,dt\,dS_x\\
    &\qquad\le 2Cr^{\de/2}+C\frac{|x_1-x_2|}{r^2}.
\end{align*}
Now, if we take $\rho_1$ small and $x_1$, $x_2\in \overline{B_{\rho_1}(x_0)}\cap\Gamma(\uu)$ and choose $r=|x_1-x_2|^{\frac2{4+\de}}$, then $$
\frac12\int_{\p B_1}|\mathbf{e}(x_1)\max(x\cdot\nu(x_1),0)^2-\mathbf{e}(x_2)\max(x\cdot\nu(x_2),0)^2|\le C|x_1-x_2|^\gamma,\quad\gamma=\frac\de{4+\de}.
$$
Moreover, we also have (see the proof of Theorem~5 in \cite{AndShaUraWei15}) \begin{align*}
\begin{multlined}\frac12\int_{\p B_1}|\mathbf{e}(x_1)\max(x\cdot\nu(x_1),0)^2-\mathbf{e}(x_2)\max(x\cdot\nu(x_2),0)^2\\
\ge c(n)(|\nu(x_1)-\nu(x_2)|+|\mathbf{e}(x_1)-\mathbf{e}(x_2)|),
\end{multlined}\end{align*}
which readily implies the H\"older continuity of $x_1\longmapsto\nu(x_1)$ and $x_1\longmapsto\mathbf{e}(x_1)$.

\medskip\noindent\emph{Step 2.} We claim that for every $\e\in(0,1)$, there exists $\rho_\e\in(0,\rho_1)$ such that for $x_1\in\Gamma(\uu)\cap\overline{B_{\rho_1}(x_0)}$ and $y\in\overline{B_{\rho_\e}(x_1)}$, \begin{align}
    &\label{eq:reg-set-cone-1}\uu(y)=0\quad\text{if } (y-x_1)\cdot\nu(x_1)<-\e|y-x_1|,\\
    &\label{eq:reg-set-cone-2}|\uu(y)|>0\quad\text{if }(y-x_1)\cdot\nu(x_1)>\e|y-x_1|.
\end{align}
Indeed, if \eqref{eq:reg-set-cone-1} does not hold, then we can take a sequence $\Gamma(\uu)\cap\overline{B_{\rho_1}(x_0)}\ni x_j\to\bar{x}$ and a sequence $y_j-x_j\to 0$ as $j\to\infty$ such that $$
|\uu(y_j)|>0\quad\text{and}\quad(y_j-x_j)\cdot\nu(x_j)<-\e|y_j-x_j|.
$$
Then we consider $\uu_j(x):=\frac{\uu(x_j+|y_j-x_j|x)}{|y_j-x_j|^2}$ and observe that for $z_j:=\frac{y_j-x_j}{|y_j-x_j|}\in\p B_1$, $|\uu_j(z_j)|>0$ and $z_j\cdot\nu(x_j)<-\e|z_j|$. Moreover, over a subsequence, $\uu_j\to\uu_0=\frac12\mathbf{e}(\bar{x})\max(x\cdot\nu(\bar{x}),0)^2$ in $C^1_{\loc}(\R^n;\R^m)$ (see the proof of Lemma~\ref{lem:rescaling-H-dist}). Then, for $K:=\{z\in\p B_1\,:\,z\cdot\nu(\bar{x})\le-\e/2|z|\}$, we have that $z_j\in K$ for large $j$ by Step 1. We also consider a bigger compact set $\tilde{K}:=\{z\in\R^n\,:\,1/2\le|z|\le2, z\cdot\nu(\bar{x})\le-\e/4|z|\}$, and let $t:=\min\{\dist(K,\p\tilde{K}),r_0\}$, where $r_0=r_0(n,m,\al)$ is as in Lemma~\ref{imp}, so that $B_t(z_j)\subset\tilde{K}$. By applying Lemma~\ref{IND}, we obtain $$
\sup_{\tilde{K}}\left(|\D\uu_j|^2+2|\uu_j|\right)\ge C(n,t)\int_{B_t(z_j)}\left(|\D\uu_j|^2+2|\uu_j|\right)\ge c(n,m,\al,\e),
$$
which gives $$
\sup_{\tilde{K}}\left(|\D\uu_0|+|\uu_0|\right)>0.
$$
However, this is a contradiction since $\uu_0(x)=\frac12\mathbf{e}(\bar{x})\max(x\cdot\nu(\bar{x}),0)^2=0$ in $\tilde{K}$.\\
On the other hand, if \eqref{eq:reg-set-cone-2} is not true, then we take a sequence $\Gamma(\uu)\cap\overline{B_{\rho_1}(x_0)}\ni x_j\to\bar{x}$ and a sequence $y_j-x_j\to 0$ such that $|\uu(y_j)|=0$ and $(y_j-x_j)\cdot\nu(x_j)>\e|y_j-x_j|$. For $\uu_j$, $\uu_0(x)=\frac12\mathbf{e}(\bar{x})\max(x\cdot\nu(\bar{x}),0)^2$ and $z_j$ as above, we will have that $\uu_j(z_j)=0$ and $z_j\in K':=\{z\in\p B_1\,:\,z\cdot\nu(\bar{x})\ge\e/2|z|\}$. We may assume $z_j\to z_0\in K'$, and obtain $\uu_0(z_0)=0$, which is a contradiction.

\medskip\noindent\emph{Step 3.} By rotations we may assume that $\nu(x_0)=\mathbf{e}_n$ and $\mathbf{e}(x_0)=\mathbf{e}_1$. Fixing $\e=\e_0$, by Step 2 and the standard arguments, we conclude that there exists a Lipschitz function $g:\R^{n-1}\to\R$ such that for some $\rho_{\e_0}>0$,\begin{align*}
    &B_{\rho_{\e_0}}(x_0)\cap\{\uu=0\}=B_{\rho_{\e_0}}(x_0)\cap\{x_n\le g(x')\},\\
    &B_{\rho_{\e_0}}(x_0)\cap\{|\uu|>0\}=B_{\rho_{\e_0}}(x_0)\cap\{x_n> g(x')\}.
\end{align*}
Now, taking $\e\to0$, we can see that $\Gamma(\uu)$ is differentiable at $x_0$ with normal $\nu(x_0)$. Recentering at any $\bar{x}\in B_{\rho_{\e_0}}(x_0)\cap\Gamma(\uu)$ and using the H\"older continuity of $\bar{x}\longmapsto\nu(\bar{x})$, we conclude that $g$ is $C^{1,\gamma}$. This completes the proof.
\end{proof}

%%%%%%%%%%%%%%%%%%%%%%%%%%%%%%%%%%%%%%%%%%%%%%%%%%

\appendix
\section{Example of almost minimizers}\label{sec:ex}

\begin{example}\label{ex:alm-min-transport}
Let $\uu$ be a solution of the system $$
\Delta \uu+\mathbf{b}(x)\D\uu-\frac\uu{|\uu|}\rchi_{\{|\uu|>0\}}=\mathbf0\quad\text{in }B_1,
$$
where $\mathbf{b}\in L^p(B_1;\R^m)$, $p>n$, is the velocity field. Then $\uu$ is an almost minimizer with a gauge function $\omega(r)=C(n,p,\|\mathbf{b}\|_{L^p(B_1;\R^m)})r^{1-n/p}$.
\end{example}

\begin{proof}
Let $B_r(x_0)\Subset B_1$ and $\vv\in \uu+W^{1,2}_0(B_r(x_0))$. Then $\uu$ satisfies the weak equation $$
\int_{B_r(x_0)}\left(\D\uu\cdot\D(\uu-\vv)-(\uu-\vv)\mathbf{b}\D\uu+(\uu-\vv)\frac{\uu}{|\uu|}\chi_{\{|\uu|>0\}}\right)=0.
$$
Thus \begin{align*}
    \int_{B_r(x_0)}|\D\uu|^2&=\int_{B_r(x_0)}\left(\D\uu\cdot\D\vv+(\uu-\vv)\mathbf{b}\D\uu+(\vv-\uu)\frac\uu{|\uu|}\chi_{\{|\uu|>0\}}\right)\\
    &\le\int_{B_r(x_0)}\left(1/2|\D\uu|^2+1/2|\D\vv|^2+(\uu-\vv)\mathbf{b}\D\uu+|\vv|-|\uu|\right).
\end{align*}
It follows that \begin{align}
    \label{eq:alm-min-ex}
    \int_{B_r(x_0)}\left(|\D\uu|^2+2|\uu|\right)\le\int_{B_r(x_0)}\left(|\D\vv|^2+2|\vv|+2(\uu-\vv)\mathbf{b}\D\uu\right).
\end{align}
To estimate the last term, we apply H\"older's inequality to have $$
\int_{B_r(x_0)}2(\uu-\vv)\mathbf{b}\D\uu\le 2\|\uu-\vv\|_{L^{p^*}(B_r(x_0);\R^m)}\|\mathbf{b}\|_{L^p(B_r(x_0);\R^m)}\|\D\uu\|_{L^2(B_r(x_0);\R^m)},
$$
with $p^*=2p/(p-1)$. Since $\uu-\vv\in W^{1,2}_0(B_r(x_0);\R^m)$, we have by the Sobolev's inequality $$
\|\uu-\vv\|_{L^{p^*}(B_r(x_0);\R^m)}\le Cr^\gamma\|\D(\uu-\vv)\|_{L^2(B_r(x_0);\R^m)},\quad \gamma=1-n/p,
$$
and thus \begin{align*}
    \int_{B_r(x_0)}2(\uu-\vv)\mathbf{b}\D\uu&\le Cr^\gamma\|\D\uu\|_{L^2(B_r(x_0);\R^m)}\|\D(\uu-\vv)\|_{L^2(B_r(x_0);\R^m)}\\
    &\le Cr^\gamma\left(\|\D\uu\|_{L^2(B_r(x_0);\R^m)}^2+\|\D(\uu-\vv)\|_{L^2(B_r(x_0);\R^m)}^2\right)\\
    &\le Cr^\gamma\int_{B_r(x_0)}\left(|\D\uu|^2+|\D\vv|^2\right).
\end{align*}
Combining this and \eqref{eq:alm-min-ex} yields $$
\int_{B_r(x_0)}\left((1-Cr^\gamma)|\D\uu|^2+2|\uu|\right)\le \int_{B_r(x_0)}\left((1+Cr^\gamma)|\D\vv|^2+2|\vv|\right).
$$
This implies $$
(1-Cr^\gamma)\int_{B_r(x_0)}\left(|\D\uu|^2+2|\uu|\right)\le(1+Cr^\gamma)\int_{B_r(x_0)}\left(|\D\vv|^2+2|\vv|\right)
$$
and hence we conclude that for $0<r<r_0(n,p,\|\mathbf{b}\|_{L^p(B_1;\R^m)})$
$$
\int_{B_r(x_0)}\left(|\D\uu|^2+2|\uu|\right)\le (1+Cr^\gamma)\int_{B_r(x_0)}\left(|\D\vv|^2+2|\vv|\right),
$$
with $C=C(n,p,\|\mathbf{b}\|_{L^p(B_1;\R^m)})$.
\end{proof}

%%%%%%%%%%%%%%%%%%%%%%%%%%%%%%%%%%%%%%%%%%%%%%

\end{document}